\theoremstyle{plain}
\newtheorem{theorem}{Theorem}
\newtheorem*{theorem*}{Theorem}
\newtheorem{lemma}{Lemma}
\newtheorem{proposition}{Proposition}
\newtheorem{corollary}{Corollary}
\theoremstyle{definition}
\newtheorem{definition}{Definition}
\theoremstyle{remark}
\newtheorem{remark}{\sc Remark}
\newtheorem*{notation}{\sc Notation}
\newcommand{\cat}{\mathrm{cat}}
\newcommand{\lax}{\mathrm{lax}}
\newcommand{\oplax}{\mathrm{oplax}}
\newcommand{\strong}{\mathrm{strong}}
\newcommand{\strict}{\mathrm{strict}}
\newcommand{\pl}{\mathrm{pl}}
\newcommand{\Comonads}{\mathsf{Comonads}}
\newcommand{\Monads}{\mathsf{Monads}}
\newcommand{\Cats}{\mathsf{Cats}}
\newcommand{\Functors}{\mathsf{Functors}}
\newcommand{\Mor}{\mathsf{Mor}}
\newcommand{\Nat}{\mathsf{Nat}}
\newcommand{\sk}{\mathrm{sk}}
\newcommand{\TwoFun}{\text{2-Fun}}
\newcommand{\Ob}{\mathrm{Ob}}
\newcommand{\tr}{\mathrm{tr}}
\newcommand{\categ}[1]{\mathsf{#1}}
\newcommand{\set}[1]{\mathrm{#1}}
\newcommand{\catoperad}[1]{\mathsf{#1}}
\newcommand{\algebra}[1]{\mathrm{#1}}
\newcommand{\coalgebra}[1]{\mathrm{#1}}
\newcommand{\catcog}[2]{\Cog_{\categ{#1}}\left(#2\right)}
\newcommand{\catalg}[2]{\Alg_{\categ{#1}}\left(#2\right)}
\newcommand{\tcatalg}[2]{\mathsf{ALG}_{\categ{#1}}\left(#2\right)}
\newcommand{\tcatcog}[2]{\mathsf{COG}_{\categ{#1}}\left(#2\right)}
\newcommand{\Fun}[3]{\mathrm{Fun}^{#1}\left(#2,#3\right)}
\newcommand{\forget}{\mathrm{U}}
\newcommand{\Operad}{\mathsf{Operad}}
\newcommand{\mbk}{\mathbb{K}}
\newcommand{\id}{\mathrm{Id}}
\newcommand{\Hom}[3]{\mathrm{hom}_{#1}\left(#2 , #3 \right)}
\newcommand{\II}{\mathbb{1}}
\newcommand{\ra}{\rightarrow}
\newcommand{\Cog}{\mathsf{Cog}}
\newcommand{\Alg}{\mathsf{Alg}}
\newcommand{\Set}{\mathsf{Set}}
\newcommand{\colim}{\mathrm{colim}}
\newcommand{\op}{\mathrm{op}}
\newcommand{\catEnd}{\mathsf{End}}
\newcommand{\itemt}{\item[$\triangleright$]}
\newcommand{\poubelle}[1]{}
\def\corollatwo{\tikz[baseline=.1ex]{
\draw (1ex,0ex) -- (1ex,1ex);
\fill (1ex,1ex) circle (1.3pt);
\draw (1ex,1ex) -- (0ex,2ex);
\draw (1ex,1ex) -- (2ex,2ex);}
}
\def\corollazero{\tikz[baseline=.1ex]{
\draw (0,0ex) -- (0ex,1ex);
\fill (0ex,1ex) circle (1.7pt);}
}
\def\corollan{\tikz[baseline=.1ex]{
\draw (3ex,0ex) -- (3ex,1ex);
\fill (3ex,1ex) circle (1.3pt);
\draw (3ex,1ex) -- (0ex,2ex);
\draw (3ex,1ex) -- (1ex,2ex);
\draw (3ex,1ex) -- (2ex,2ex);
\draw (3ex,1ex) -- (6ex,2ex);
\draw (4ex,2ex) node {$\cdots$};}
}
\def\treeassleft{\tikz[baseline=.1ex]{
\draw (2ex,0ex) -- (2ex,1ex);
\fill (2ex,1ex) circle (1.3pt);
\draw (2ex,1ex) -- (0ex,3ex);
\draw (2ex,1ex) -- (3ex,2ex);
\fill (1ex,2ex) circle (1.3pt);
\draw (1ex,2ex) -- (2ex,3ex);}
}
\def\treeassright{\tikz[baseline=.1ex]{
\draw (1ex,0ex) -- (1ex,1ex);
\fill (1ex,1ex) circle (1.3pt);
\draw (1ex,1ex) -- (0ex,2ex);
\draw (1ex,1ex) -- (3ex,3ex);
\fill (2ex,2ex) circle (1.3pt);
\draw (2ex,2ex) -- (1ex,3ex);}
}
\def\treeunitleft{\tikz[baseline=.1ex]{
\draw (1ex,0ex) -- (1ex,1ex);
\fill (1ex,1ex) circle (1.3pt);
\draw (1ex,1ex) -- (0ex,2ex);
\draw (1ex,1ex) -- (2ex,2ex);
\fill (0ex,2ex) circle (1.7pt);}
}
\def\treeunitright{\tikz[baseline=.1ex]{
\draw (1ex,0ex) -- (1ex,1ex);
\fill (1ex,1ex) circle (1.3pt);
\draw (1ex,1ex) -- (0ex,2ex);
\draw (1ex,1ex) -- (2ex,2ex);
\fill (2ex,2ex) circle (1.7pt);}
}
\def\treepentagonna{\tikz[baseline=.1ex]{
\draw (3ex,0ex) -- (3ex,1ex);
\fill (3ex,1ex) circle (1.3pt);
\fill (2ex,2ex) circle (1.3pt);
\fill (1ex,3ex) circle (1.3pt);
\draw (3ex,1ex) -- (0ex,4ex);
\draw (3ex,1ex) -- (0ex,4ex);
\draw (3ex,1ex) -- (4ex,2ex);
\draw (2ex,2ex) -- (3ex,3ex);
\draw (1ex,3ex) -- (2ex,4ex);}
}
\def\treepentagonnb{\tikz[baseline=.1ex]{
\draw (2.5ex,0ex) -- (2.5ex,1ex);
\fill (2.5ex,1ex) circle (1.3pt);
\draw (2.5ex,1ex) -- (1ex,2ex);
\draw (2.5ex,1ex) -- (4ex,2ex);
\draw (1ex,2ex) -- (0ex,3ex);
\fill (1ex,2ex) circle (1.3pt);
\draw (1ex,2ex) -- (2ex,3ex);
\draw (4ex,2ex) -- (3ex,3ex);
\fill (4ex,2ex) circle (1.3pt);
\draw (4ex,2ex) -- (5ex,3ex);}
}
\def\treepentagonnc{\tikz[baseline=.1ex]{
\draw (2ex,0ex) -- (2ex,1ex);
\fill (2ex,1ex) circle (1.3pt);
\draw (2ex,1ex) -- (0ex,3ex);
\fill (1ex,2ex) circle (1.3pt);
\draw (2ex,1ex) -- (3ex,2ex);
\fill (2ex,3ex) circle (1.3pt);
\draw (1ex,2ex) -- (3ex,4ex);
\draw (2ex,3ex) -- (1ex,4ex);}
}
\def\treepentagonnd{\tikz[baseline=.1ex]{
\draw (1ex,0ex) -- (1ex,1ex);
\fill (1ex,1ex) circle (1.3pt);
\draw (1ex,1ex) -- (0ex,2ex);
\draw (1ex,1ex) -- (3ex,3ex);
\fill (2ex,2ex) circle (1.3pt);
\draw (2ex,2ex) -- (0ex,4ex);
\fill (1ex,3ex) circle (1.3pt);
\draw (1ex,3ex) -- (2ex,4ex);}
}
\def\treepentagonne{\tikz[baseline=.1ex]{
\draw (1ex,0ex) -- (1ex,1ex);
\fill (1ex,1ex) circle (1.3pt);
\draw (1ex,1ex) -- (0ex,2ex);
\draw (1ex,1ex) -- (4ex,4ex);
\fill (2ex,2ex) circle (1.3pt);
\draw (2ex,2ex) -- (1ex,3ex);
\fill (3ex,3ex) circle (1.3pt);
\draw (3ex,3ex) -- (2ex,4ex);}
}
\def\treetrianglea{\tikz[baseline=.1ex]{
\draw (2ex,0ex) -- (2ex,1ex);
\fill (2ex,1ex) circle (1.3pt);
\draw (2ex,1ex) -- (0ex,3ex);
\draw (2ex,1ex) -- (3ex,2ex);
\fill (1ex,2ex) circle (1.3pt);
\draw (1ex,2ex) -- (2ex,3ex);
\fill (2ex,3ex) circle (1.7pt);
}
}
\def\treetriangleb{\tikz[baseline=.1ex]{
\draw (1ex,0ex) -- (1ex,1ex);
\fill (1ex,1ex) circle (1.3pt);
\draw (1ex,1ex) -- (0ex,2ex);
\draw (1ex,1ex) -- (3ex,3ex);
\fill (2ex,2ex) circle (1.3pt);
\draw (2ex,2ex) -- (1ex,3ex);
\fill (1ex,3ex) circle (1.7pt);}}
\def\treepentagonnmoda{\tikz[baseline=.1ex]{
\draw (3ex,0ex) -- (3ex,1ex);
\fill (2.5ex,0.7ex)rectangle(3.5ex,1.3ex);
\fill (2ex,2ex) circle (1.3pt);
\fill (1ex,3ex) circle (1.3pt);
\draw (3ex,1ex) -- (0ex,4ex);
\draw (3ex,1ex) -- (0ex,4ex);
\draw (3ex,1ex) -- (4ex,2ex);
\draw (2ex,2ex) -- (3ex,3ex);
\draw (1ex,3ex) -- (2ex,4ex);}
}
\def\treepentagonnmodb{\tikz[baseline=.1ex]{
\draw (2.5ex,0ex) -- (2.5ex,1ex);
\fill (2ex,0.7ex) rectangle (3ex,1.3ex);
\draw (2.5ex,1ex) -- (1ex,2ex);
\draw (2.5ex,1ex) -- (4ex,2ex);
\draw (1ex,2ex) -- (0ex,3ex);
\fill (1ex,2ex) circle (1.3pt);
\draw (1ex,2ex) -- (2ex,3ex);
\draw (4ex,2ex) -- (3ex,3ex);
\fill (3.5ex,1.7ex) rectangle (4.5ex,2.3ex);
\draw (4ex,2ex) -- (5ex,3ex);}
}
\def\treepentagonnmodc{\tikz[baseline=.1ex]{
\draw (2ex,0ex) -- (2ex,1ex);
\fill (1.5ex,0.7ex)rectangle(2.5ex,1.3ex);
\draw (2ex,1ex) -- (0ex,3ex);
\fill (1ex,2ex) circle (1.3pt);
\draw (2ex,1ex) -- (3ex,2ex);
\fill (2ex,3ex) circle (1.3pt);
\draw (1ex,2ex) -- (3ex,4ex);
\draw (2ex,3ex) -- (1ex,4ex);}
}
\def\treepentagonnmodd{\tikz[baseline=.1ex]{
\draw (1ex,0ex) -- (1ex,1ex);
\fill (0.5ex,0.7ex)rectangle(1.5ex,1.3ex);
\draw (1ex,1ex) -- (0ex,2ex);
\draw (1ex,1ex) -- (3ex,3ex);
\fill (1.5ex,1.7ex)rectangle(2.5ex,2.3ex);
\draw (2ex,2ex) -- (0ex,4ex);
\fill (1ex,3ex) circle (1.3pt);
\draw (1ex,3ex) -- (2ex,4ex);}
}
\def\treepentagonnmode{\tikz[baseline=.1ex]{
\draw (1ex,0ex) -- (1ex,1ex);
\fill (0.5ex,0.7ex)rectangle(1.5ex,1.3ex);
\draw (1ex,1ex) -- (0ex,2ex);
\draw (1ex,1ex) -- (4ex,4ex);
\fill (1.5ex,1.7ex)rectangle(2.5ex,2.3ex);
\draw (2ex,2ex) -- (1ex,3ex);
\fill (2.5ex,2.7ex)rectangle(3.5ex,3.3ex);
\draw (3ex,3ex) -- (2ex,4ex);}
}
\def\corollamod{\tikz[baseline=.1ex]{
\draw (1ex,0ex) -- (1ex,1ex);
\fill (0.5ex,0.7ex)rectangle(1.5ex,1.3ex);
\draw (1ex,1ex) -- (0ex,2ex);
\draw (1ex,1ex) -- (2ex,2ex);}
}
\def\treeassleftmod{\tikz[baseline=.1ex]{
\draw (2ex,0ex) -- (2ex,1ex);
\fill (1.5ex,0.7ex)rectangle(2.5ex,1.3ex);
\fill (1ex,2ex) circle (1.3pt);
\draw (2ex,1ex) -- (0ex,3ex);
\draw (1ex,2ex) -- (2ex,3ex);
\draw (2ex,1ex) -- (3ex,2ex);}
}
\def\treeassrightmod{\tikz[baseline=.1ex]{
\draw (1ex,0ex) -- (1ex,1ex);
\fill (0.5ex,0.7ex)rectangle(1.5ex,1.3ex);
\fill (1.5ex,1.7ex)rectangle(2.5ex,2.3ex);
\draw (1ex,1ex) -- (0ex,2ex);
\draw (1ex,1ex) -- (3ex,3ex);
\draw (2ex,2ex) -- (1ex,3ex);}
}
\def\treeunitmodule{\tikz[baseline=.1ex]{
\draw (1ex,0ex) -- (1ex,1ex);
\fill (0.5ex,0.7ex)rectangle(1.5ex,1.3ex);
\fill (0ex,2ex) circle (1.7pt);
\draw (1ex,1ex) -- (0ex,2ex);
\draw (1ex,1ex) -- (2ex,2ex);}
}
\def\treeunitmoduleasslefta{\tikz[baseline=.1ex]{
\draw (2ex,0ex) -- (2ex,1ex);
\fill (1.5ex,0.7ex)rectangle(2.5ex,1.3ex);
\draw (2ex,1ex) -- (0ex,3ex);
\draw (1ex,2ex) -- (2ex,3ex);
\fill (1ex,2ex) circle (1.3pt);
\fill (0ex,3ex) circle (1.7pt);
\draw (2ex,1ex) -- (3ex,2ex);}
}
\def\treeunitmoduleassleftb{\tikz[baseline=.1ex]{
\draw (1ex,0ex) -- (1ex,1ex);
\fill (0.5ex,0.7ex)rectangle(1.5ex,1.3ex);
\fill (1.5ex,1.7ex)rectangle(2.5ex,2.3ex);
\fill (0ex,2ex) circle (1.7pt);
\draw (1ex,1ex) -- (0ex,2ex);
\draw (1ex,1ex) -- (3ex,3ex);
\draw (2ex,2ex) -- (1ex,3ex);}
}
\def\treeunitmoduleassrighta{\tikz[baseline=.1ex]{
\draw (2ex,0ex) -- (2ex,1ex);
\fill (1.5ex,0.7ex)rectangle(2.5ex,1.3ex);
\draw (2ex,1ex) -- (0ex,3ex);
\draw (1ex,2ex) -- (2ex,3ex);
\fill (1ex,2ex) circle (1.3pt);
\fill (2ex,3ex) circle (1.7pt);
\draw (2ex,1ex) -- (3ex,2ex);}
}
\def\treeunitmoduleassrightb{\tikz[baseline=.1ex]{
\draw (1ex,0ex) -- (1ex,1ex);
\fill (0.5ex,0.7ex)rectangle(1.5ex,1.3ex);
\fill (1.5ex,1.7ex)rectangle(2.5ex,2.3ex);
\fill (1ex,3ex) circle (1.7pt);
\draw (1ex,1ex) -- (0ex,2ex);
\draw (1ex,1ex) -- (3ex,3ex);
\draw (2ex,2ex) -- (1ex,3ex);}
}
\def\corollatwocomm{\tikz[baseline=.1ex]{
\draw (1ex,0ex) -- (1ex,1ex);
\fill (1ex,1ex) circle (1.3pt);
\draw (1ex,1ex) -- (0ex,2ex);
\draw (1ex,1ex) -- (2ex,2ex);
\draw (0ex,2ex) -- (2ex,3ex);
\draw (2ex,2ex) -- (0ex,3ex);}
}
\def\corollacommassa{\tikz[baseline=.1ex]{
\draw (1ex,0ex) -- (1ex,1ex);
\fill (1ex,1ex) circle (1.3pt);
\draw (1ex,1ex) -- (0ex,2ex);
\draw (1ex,1ex) -- (2ex,2ex);
\draw (0ex,2ex) -- (2ex,3ex);
\draw (2ex,2ex) -- (0ex,3ex);
\fill (2ex,3ex) circle (1.3pt);
\draw (2ex,3ex) -- (1ex,4ex);
\draw (2ex,3ex) -- (3ex,4ex);}
}
\def\corollacommassb{\tikz[baseline=.1ex]{
\draw (2ex,0ex) -- (2ex,1ex);
\fill (2ex,1ex) circle (1.3pt);
\draw (2ex,1ex) -- (0ex,3ex);
\draw (2ex,1ex) -- (3ex,2ex);
\fill (1ex,2ex) circle (1.3pt);
\draw (1ex,2ex) -- (3ex,4ex);
\draw (3ex,2ex) -- (3ex,3ex);
\draw (3ex,3ex) -- (0ex,4ex);
\draw (0ex,3ex) -- (2ex,4ex);}
}
\def\corollacommassc{\tikz[baseline=.1ex]{
\draw (2ex,0ex) -- (2ex,1ex);
\fill (2ex,1ex) circle (1.3pt);
\draw (2ex,1ex) -- (0ex,3ex);
\draw (2ex,1ex) -- (3ex,2ex);
\fill (1ex,2ex) circle (1.3pt);
\draw (1ex,2ex) -- (2ex,3ex);
\draw (0ex,3ex) -- (2ex,4ex);
\draw (2ex,3ex) -- (0ex,4ex);}
}
\def\corollacommassd{\tikz[baseline=.1ex]{
\draw (1ex,0ex) -- (1ex,1ex);
\fill (1ex,1ex) circle (1.3pt);
\draw (1ex,1ex) -- (0ex,2ex);
\draw (1ex,1ex) -- (3ex,3ex);
\fill (2ex,2ex) circle (1.3pt);
\draw (2ex,2ex) -- (0ex,4ex);
\draw (0ex,2ex) -- (2ex,4ex);}
}
\def\corollacommasse{\tikz[baseline=.1ex]{
\draw (1ex,0ex) -- (1ex,1ex);
\fill (1ex,1ex) circle (1.3pt);
\draw (1ex,1ex) -- (0ex,2ex);
\draw (1ex,1ex) -- (3ex,3ex);
\fill (2ex,2ex) circle (1.3pt);
\draw (2ex,2ex) -- (1.5ex,2.5ex);
\draw (1.5ex,2.5ex) -- (3ex,4ex);
\draw (3ex,3ex) -- (0ex,4ex);
\draw (0ex,2ex) -- (0ex,3ex);
\draw (0ex,3ex) -- (2ex,4ex);}
}
\def\corollatwodoublecomm{\tikz[baseline=.1ex]{
\draw (1ex,0ex) -- (1ex,1ex);
\fill (1ex,1ex) circle (1.3pt);
\draw (1ex,1ex) -- (0ex,2ex);
\draw (1ex,1ex) -- (2ex,2ex);
\draw (0ex,2ex) -- (2ex,3ex);
\draw (2ex,3ex) -- (0ex,4ex);
\draw (2ex,2ex) -- (0ex,3ex);
\draw (0ex,3ex) -- (2ex,4ex);}
}
\title{Mapping coalgebras I\\Comonads}
\author{Brice Le Grignou}
\email{bricelegrignou "at" gmail.com}
\date{\today}
\begin{document}

\begin{abstract}
    In this article we describe properties of the 2-functor from the 2-category of comonads to the 2-category of functors that sends a comonad to its forgetful functor. This allows us to describe contexts where algebras over a monad are enriched tensored and cotensored over coalgebras over a comonad.
\end{abstract}
\maketitle

\setcounter{tocdepth}{1}
\tableofcontents

\section*{Introduction}

This is the first of a series of articles about categories enriched over a category describing some notion of a coalgebra. This article is devoted to coalgebras over a comonad. The next article will deal with coalgebras over an operad and the last article will focus on coalgebras in the context of chain complexes.

Let us start with Sweedler's theory (\cite{Sweedler69}, \cite{AnelJoyal13}). For any two differential graded coassociative coalgebras
$(\coalgebra V, w_V, \tau_V)$ and $(\coalgebra W,w_W,\tau_W)$, the tensor product $V \otimes W$ inherits
the structure of a coassociative coalgebra as follows
\[
	V \otimes W \xrightarrow{w_V \otimes w_W} V \otimes V \otimes W \otimes W
	\simeq V \otimes W \otimes V \otimes W .
\]
This gives a symmetric monoidal structure on the category of differential graded coalgebras.
Moreover, from the existence of cofree coalgebras \cite{Anel14}, one can show that
this monoidal structure is closed. Besides, for any differential graded associative algebra
$(\algebra A,m)$ and any differential graded coalgebra $(\coalgebra V,w)$, the mapping chain complex
$[\coalgebra V,\algebra A]$ has the canonical structure of an algebra
\[
	[\coalgebra V, \algebra A] \otimes [\coalgebra V, \algebra A] \hookrightarrow
	[\coalgebra V \otimes \coalgebra V, \algebra A \otimes \algebra A]
	\xrightarrow{[w,m]} [\coalgebra V,\algebra A].
\]
It is usually called the convolution algebra of $\coalgebra V$ and $\algebra A$.
From the existence of free algebras and cofree coalgebras,
one can build a left adjoint $V \boxtimes -$ to the functor $[\coalgebra V, -]$
and a left adjoint $\{- , \algebra A\}$ to the functor $[- , \algebra A]$.
These three bifunctors
\begin{align*}
	[-,-] :& \mathsf{Coalgebras}^{\op} \times \mathsf{Algebras} \to \mathsf{Algebras}
	\\
	- \boxtimes - : & \mathsf{Coalgebras} \times \mathsf{Algebras} \to \mathsf{Algebras}
	\\
	\{-,-\} : & \mathsf{Algebras}^\op \times \mathsf{Algebras} \to \mathsf{Coalgebras}
\end{align*}
make the category of algebras tensored, cotensored and enriched
over the category of coalgebras.

This may be reinterpreted in the language of monads and comonads since associative dg algebras and coassociative dg coalgebras are respectively algebras over a monad $M$ and coalgebras over a comonad $Q$ on chain complexes (see \cite{Anel14} for a description of $Q$). Indeed, the structure of a monoidal category on coalgebras is related to the structure of a Hopf comonad on $Q$, that is the data of a natural map
$$
    Q(X) \otimes Q(Y) \to Q(X \otimes Y) 
$$
satisfying dual conditions as those of a Hopf monad (see \cite{Moerdijk02}). Moreover, the lifting of the mapping chain complex bifunctor $X,Y \mapsto [X,Y]$ to a bifunctor from $\mathsf{Coalgebras}^{\op} \times \mathsf{Algebras}$ to $\mathsf{Algebras}$ is related to the structure of a Hopf module monad on $M$ with respect to the Hopf comonad $Q$, that is the data of a natural map
$$
    M([X,Y]) \to [Q(X),M(Y)]
$$
satisfying again some conditions. All these relations between monoidal structures on categories of algebras and coalgebras and structures on monads and comonads are encoded in a single 2-functor from the 2-category $\Comonads$ of categories with comonads and oplax morphisms to the 2-category $\Functors$ made up of functors between two categories.

\begin{theorem*}
The construction that sends a category $\categ C$ with a comonad $Q$ to the forgetful functor
$U_Q$ from $Q$-coalgebras to $\categ C$
induces a 2-functor from the two category $\Comonads$ to the 2-category $\Functors$ that is strictly fully faithful and preserves strict finite products.
\end{theorem*}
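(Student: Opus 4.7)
The plan is to proceed in three stages: construct the 2-functor explicitly, prove that its action on hom-categories is an isomorphism, and verify strict preservation of finite products.

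\emph{Construction.} An oplax morphism $(F,\alpha):(\categ{C},Q) \to (\categ{D},P)$ consists of a functor $F:\categ{C}\to\categ{D}$ together with a natural transformation $\alpha:FQ\Rightarrow PF$ compatible with the comultiplications and counits. It induces
\[
\tilde F : Q\text{-cog} \longrightarrow P\text{-cog},\qquad (X,c) \longmapsto \bigl(FX,\ \alpha_X \circ F(c)\bigr),
\]
and one checks that the two oplax axioms for $\alpha$ translate exactly into the coassociativity and counit axioms of the structure map $\alpha_X\circ F(c)$. By construction $U_P\tilde F = F\, U_Q$, so the pair $(F,\tilde F)$ is a 1-morphism in $\Functors$. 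A 2-cell $\eta:(F,\alpha)\Rightarrow(F',\alpha')$, i.e.\ a natural transformation with $P\eta\cdot\alpha=\alpha'\cdot\eta Q$, has components $\eta_X$ that are automatically $P$-coalgebra morphisms, giving a natural transformation $\tilde\eta:\tilde F\Rightarrow\tilde F'$ lying above $\eta$. Strict 2-functoriality is then immediate from the formulas.

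\emph{Fully faithfulness.} The heart of the proof is the inverse. Given a strictly commutative square $U_P\tilde F = F\, U_Q$, the oplax structure is read off from the image of the cofree coalgebras. For each $Y\in\categ{C}$, $\tilde F(QY,\Delta_Y)$ is a $P$-coalgebra with some structure map $\beta_Y:FQY\to PFQY$, and I set
\[
\alpha_Y := P(F\epsilon_Y)\circ \beta_Y,
\]
where $\epsilon:Q\Rightarrow\id_{\categ{C}}$ is the counit of $Q$. A short computation using the comonad axiom $Q\epsilon\cdot\Delta=\id$ and the naturality of $\alpha$ shows that this formula recovers the original $\alpha$ in the previous paragraph. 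Conversely, starting from a lift $\tilde F$, each coalgebra $(X,c)$ sits in a canonical coalgebra morphism $c:(X,c)\to(QX,\Delta_X)$; applying $\tilde F$ to it and using $U_P\tilde F = FU_Q$ forces the structure of $\tilde F(X,c)$ to equal $\alpha_X\circ F(c)$, which proves simultaneously uniqueness and that the two constructions are mutually inverse. Full faithfulness on 2-morphisms is parallel: a natural transformation above $\eta:F\Rightarrow F'$ is componentwise $\eta_X$, and its compatibility with the coalgebra structures induced by $\alpha$ and $\alpha'$ rephrases exactly the oplax 2-cell condition.

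\emph{Main obstacle and products.} The main technical work lies in checking that the reconstructed $\alpha$ is natural and satisfies the two oplax axioms; each reduces to a diagram chase combining the $P$-coalgebra axioms for $\beta_Y$ with the comonad axioms of $Q$ and the naturality of $\epsilon$, exploiting in an essential way that cofree coalgebras are precisely those that are right-induced from $\categ{C}$. For strict finite products, the product $(\categ{C}\times\categ{D},\,Q\times P)$ in $\Comonads$ has coalgebras canonically identified with pairs of a $Q$-coalgebra and a $P$-coalgebra, so $U_{Q\times P}$ is strictly $U_Q\times U_P$; the terminal case (the identity comonad on the terminal category) is immediate.
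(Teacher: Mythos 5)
Your proof is correct and follows essentially the same route as the paper: the lift $\tilde F(X,c)=(FX,\alpha_X\circ F(c))$ and the recovery of $\alpha_Y$ from the structure map of the image of the cofree coalgebra are precisely the two mutually inverse constructions of Proposition~\ref{prop : naturcomonad} (there phrased as taking the mate of $FU_Q=U_RF_{cog}$ under the adjunctions $U\dashv L$), and your treatment of 2-cells and of products matches Proposition~\ref{prop : naturrestric} and the componentwise identification of $(Q\times P)$-coalgebras with pairs. The only cosmetic difference is that the paper deduces preservation of strict finite products from the fact that $\Cog$ is corepresented by a cocommutative comonoid object, rather than by your direct computation.
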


This theorem shows in particular that these relations between monoidal structures on categories of algebras and coalgebras and structures on monads and comonads are bijections. For instance, a monoidal structure on the category of $M$-algebras that lifts that of chain complexes is equivalent to the structure of a Hopf monad on $M$ (see again \cite{Moerdijk02}). Moreover, the existence of the two bifunctors $- \boxtimes -$ and $\{-, - \}$ described above is then just a consequence of Johnstone's adjoint lifiting theorem (\cite{Johnstone75}).

\subsection*{Layout}

In the first section, we describe a strict version of a monoidal symmetric monoidal category that we call a monoidal context. In the second section, we describe the monoidal context of comonads and relate it to the monoidal context of functors. In the third section, we apply this relation and the adjoint lifting theorem to describe mapping coalgebras.

\subsection*{Acknowledgement}

I would like to thank Damien Lejay and Mathieu Anel for inspiring discussions. The idea of this work came when
reading the book \cite{AnelJoyal13} by Mathieu Anel and Andr\'e Joyal. This work was supported in part by
the NWO grant of Ieke Moerdijk.

\subsection*{Universes}

Let us consider three universes $\mathcal U \in \mathcal{V} \in \mathcal W$.
A set is called $\mathcal U$-small if it is an element of $\mathcal U$ and
it is called $\mathcal U$-large if it is a subset of $\mathcal U$. The notion
of smallness and largeness are defined similarly for the other universes.
We thus have a hierarchy of sizes of sets
$$
\mathcal{U}\text{ small sets}
\subset \mathcal{U}\text{ large sets}
\subset \mathcal{V}\text{ small sets}
\subset \mathcal{V}\text{ large sets}
\subset \mathcal{W}\text{ small sets}
\subset \mathcal{W}\text{ large sets}.
$$
Besides, a $\mathcal U$-category is a category whose set of objects
is $\mathcal{U}$-large and whose hom sets are all $\mathcal{U}$-small.
Such a $\mathcal U$-category is called $\mathcal U$-small if its set
of objects is $\mathcal U$-small. We have similar notions of
$\mathcal V$ and $\mathcal W$.

Finally, we will use the following aliases:
\begin{itemize}
    \itemt a set, also called small set will be a
    $\mathcal{W}$-small set;
    \itemt a large set will be a
    $\mathcal{W}$-large set;
    \itemt a category will be a $\mathcal W$-category;
    \itemt a small category will be a $\mathcal W$-small category.
\end{itemize}

\subsection*{Some notations}

\begin{itemize}
    \itemt For any natural integer $n$, the set of permutations of the set
    $$
\underline n = \{1, \ldots, n\}
    $$
    will be denoted $\mathbb S_n$.
    \itemt For any natural integer $n$, any permutation $\sigma \in \mathbb S_n$
    and any category $\categ C$,
    we denote $\sigma^\ast$ the following functor
    $$
    \categ C^n = \categ{Fun}(\underline{n}, \categ C)
    \xrightarrow{- \circ \sigma}
    \categ{Fun}(\underline{n}, \categ C)
    = \categ C^n .
    $$
    \itemt For a comonad $Q$ on a category $\categ E$, the induced comonadic adjunction relating $Q$-coalgebras
    to $\categ E$ will be denoted $\forget_Q \dashv L^Q$.
    
    \itemt For a monad $M$, we will sometimes denote
    $T_M \dashv \forget^M$ the induced monadic adjunction relating $M$-algebras
    to $\categ E$. Otherwise, if the context is clear, we will use the same notation for the monad
    $M$ and the left adjoint functor from the ground category to the category of algebras.
\end{itemize}


\section{Monoidal context}

In this section, we describe a semi-strict notion of a symmetric monoidal 2-category that we call a monoidal context. Then, we describe some usual algebraic notions in a monoidal context.

We refer to \cite{DAY199799} and \cite{SchommerPries09} for broader notions of symmetric monoidal 2-categories. In particular, the PhD thesis of Chris Schommer-Pries describes in details a "full" version of a symmetric monoidal bicategory.

Let us fix a universe (except in the last subsection); for instance $\mathcal U$.
Then $\mathcal U$-small sets will be called small sets or just sets
and
$\mathcal U$-large sets will be called large sets.

\subsection{Strict 2-categories}

\subsubsection{The definition of a 2-category}

\begin{definition}
 A strict 2-category $\categ C$ is a category enriched in categories, that is the data of
\begin{itemize}
 \itemt a large set of objects $\Ob(\categ C)$;
 \itemt for any two objects $X,Y$, a small category $\categ C(X,Y)$;
 \itemt a unital associative composition $\categ C (Y,Z) \times \categ C(X,Z) \to \categ C (Y,Z)$
 whose units are elements $1_X \in \categ C(X,X)$.
\end{itemize}
A strict 2-functor $F$ between two strict 2-categories $\categ C$
and $\categ D$ is a morphism
of categories enriched in categories , that is the data of
\begin{itemize}
 \itemt a function $F(-) : \Ob(\categ C) \to \Ob(\categ D)$;
  \itemt functors $F(X,Y): \categ C(X,Y) \to \categ D(F(X),F(Y))$ that commute with the composition and send
  units to units.
\end{itemize}
\end{definition}

\begin{definition}
    A strict 2-category is called small if its set of objects is small.
    The data of small strict 2-categories and strict 2-functors
    form the category $\categ{2-Cats}$.
\end{definition}

\begin{remark}
From now on, we will call strict 2-functors just 2-functors.
\end{remark}

\begin{definition}
 Given a strict 2-category $\categ C$ and two objects $X,Y \in \Ob(\categ C)$,
 \begin{itemize}
     \itemt an object of $\categ C(X,Y)$
     will be called a morphism of $\categ C$;
     \itemt a morphism in the category $\categ C(X,Y)$ will be called a 2-morphism of $\categ C$.
 \end{itemize}
\end{definition}

\begin{definition}
Let $\categ C$, be a strict 2-category and $X,Y,Z$ be three objects.
\begin{itemize}
    \itemt Given morphisms $f_1, f_2 : X \to Y$ and $g_1,g_2 : Y \to Z$ and 2-morphisms $a : f_1 \to f_2$ and 
    $b : g_1 \to g_2$, one can compose $a$ and $b$ using the functor
    $$
    \categ C(Y,Z) \times \categ C (X,Y)
    \to \categ C (X,Z)
    $$
    to obtain a 2-morphism from $g_1 \circ f_1$ to $g_2 \circ f_2$ that is denoted $b \circ_h a$ and is called the horizontal composition of $a$ and $b$.
    \itemt Given morphisms $f_1, f_2, f_3 : X \to Y$ and 2-morphisms $a_1 : f_1 \to f_2$ and $a_2 : f_2 \to f_3$, one can compose $a_1$ and $a_2$ as morphisms in the category $\categ C (X,Y)$ to obtain a 2-morphism from $f_1$ to $f_3$, denoted $a_2 \circ_v a_1$ and called the vertical composition of $a_1$ and $a_2$.
\end{itemize}
\end{definition}

\begin{notation}
Let us consider a 2-morphism $a$ of $\categ C$, that is a morphism in $\categ C(X,Y)$ for two objects $X,Y$. For any morphism $f : Y \to Z$, we will usually denote the horizontal composition $\id_f \circ_h a$ as
 $$
    f \circ_h a .
 $$
 Similarly, for any morphism $g : Z \to X$, we will denote $a \circ_h \id_g$ as $a \circ_h g$. 
\end{notation}

\begin{definition}
An isomorphism in a strict 2-category $\categ C$ is an isomorphism of the underlying category (that is the skeleton of $\categ C$ denoted $\sk(\categ C))$. A 2-isomorphism is an invertible 2-morphism. Finally, an equivalence in $\categ C$ is a morphism $f: X \to Y$ so that there exists a morphism
$g : Y \to X$ and 2-isomorphisms
\begin{align*}
    a : \id_{X} \simeq g \circ f;
    \\
    b : \id_{Y} \simeq f \circ g.
\end{align*}
Then, $g$ is a pseudo-inverse of $f$.
\end{definition}

\begin{definition}
An adjunction in $\categ C$ is the data of two objects $X,Y$ together with morphisms
\begin{align*}
    l : X \to Y
    \\
    r : Y \to X
\end{align*}
and 2-morphisms $\eta : \id_X \to rl$
and $\epsilon : lr \to \id_Y$ so that
\begin{align*}
    (r \circ_h \epsilon) \circ_v (\eta \circ_h r)
    = \id_R;
    \\
    (\epsilon \circ_h l) \circ_v (l \circ_h \eta)
    = \id_R.
\end{align*}
\end{definition}

\begin{definition}
An adjoint equivalence is an adjunction whose unit and counit are isomorphisms.
\end{definition}

\begin{proposition}
If $f: X \to Y$ is an equivalence of a strict 2-category with pseudo-inverse $g$, then, $f$ and $g$ are part of an adjoint equivalence.
\end{proposition}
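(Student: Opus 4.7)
The plan is to keep the given 2-isomorphism $a$ as the unit and produce an invertible counit by correcting $b^{-1}$ with a canonical 2-cell built from the data. Set $\eta := a : \id_X \to g \circ f$ and, provisionally, $\epsilon_0 := b^{-1} : f \circ g \to \id_Y$. Both are 2-isomorphisms, but there is no reason for either triangle identity to hold for this pair; the failure of the first one is recorded by the invertible 2-cell
$$
\tau := (\epsilon_0 \circ_h f) \circ_v (f \circ_h \eta) : f \to f.
$$

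Next, I would define the corrected counit
$$
\epsilon := \epsilon_0 \circ_v (\tau^{-1} \circ_h g) : f \circ g \to \id_Y,
$$
which remains a 2-isomorphism since $\tau^{-1} \circ_h g$ is. A direct computation, using only the interchange law and the fact that horizontal composition with an identity 2-cell on an identity 1-morphism acts trivially in a strict 2-category, yields
$$
(\epsilon \circ_h f) \circ_v (f \circ_h \eta) = \tau \circ_v \tau^{-1} = \id_f,
$$
so the first triangle identity holds. This step is essentially bookkeeping: one rewrites $(\tau^{-1} \circ_h g \circ_h f) \circ_v (f \circ_h \eta)$ via interchange as $(f \circ_h \eta) \circ_v \tau^{-1}$, and then the middle term combines with $(\epsilon_0 \circ_h f)$ to reproduce $\tau$.

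The main obstacle is the second triangle identity $(g \circ_h \epsilon) \circ_v (\eta \circ_h g) = \id_g$. Here the classical \emph{swallowtail} trick is needed: set
$$
\sigma := (g \circ_h \epsilon) \circ_v (\eta \circ_h g) : g \to g,
$$
which is invertible, and prove $\sigma = \id_g$. The idea is to whisker $\sigma$ by $f$ on the right (or equivalently post-compose by something involving $f$ and $\eta$), use the first triangle identity already established together with the interchange law to simplify, and exhibit the result as the identity on the appropriate composite. Because $\eta$ and $\epsilon$ are 2-isomorphisms, the functor of whiskering by $f$ is faithful on the relevant hom-categories (it has a pseudo-inverse up to conjugation by $\eta$ and $\epsilon$), and so the equality after whiskering by $f$ descends to $\sigma = \id_g$. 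This last manipulation is where the invertibility hypothesis on $a$ and $b$ is essentially used; everything earlier goes through formally for an arbitrary counit candidate.
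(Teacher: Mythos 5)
Your proof is correct and follows essentially the same strategy as the paper: keep one of the two given 2-isomorphisms as the unit and correct the other by an invertible 2-cell built from the data so that the triangle identities hold (your $\epsilon = \epsilon_0 \circ_v (\tau^{-1}\circ_h g)$ unwinds to the paper's explicit composite, up to the choice of constructing $f \dashv g$ rather than $g \dashv f$). You go further than the paper in actually verifying the first triangle identity and sketching the standard whiskering argument for the second, both of which check out.
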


\begin{proof}
 Let us consider two 2-isomorphisms
 \begin{align*}
     \eta : \id_Y \simeq fg;
     \\
     \zeta : gf \simeq \id_X .
 \end{align*}
 We can notice that the two maps $gf \circ_h \zeta$ and $\zeta \circ_h gf$ from $gfgf$ to $gf$ are equal.
 If we define the 2-isomorphism $\epsilon :gf \simeq \id_X$ as the composition
 $$
    gf \xrightarrow{ gf \circ_h \zeta^{-1} = \zeta^{-1} \circ_h gf} gfgf
    \xrightarrow{g \circ_h \eta^{-1} \circ_h f}
    gf
    \xrightarrow{\zeta^{-1}}
    \id_X,
 $$
 then, the tuple $(f,g,\eta,\epsilon)$ is an adjunction.
\end{proof}

\subsubsection{The strict 2-category of strict 2-categories}

Categories are organised into a 2-category. Similarly, 2-categories form a 3-category. But in the same way one can consider the category of categories, one can also consider a 2-category of strict 2-categories. This implies that we will not consider mapping categories up to equivalences but up to isomorphisms.

\begin{definition}
Categories form a coreflexive full subcategory of strict 2-categories.
We denote $\sk$ the corresponding idempotent comonads
that sends a strict 2-category to its underlying category (that is with the same objects and morphisms) called its skeleton.
\end{definition}

\begin{definition}
 Let $\categ{Mor}$ be the category
 with two objects $0$ and $1$ and a non trivial
 morphism from $0$ to $1$. Let $\categ{Nat}$
 be the strict 2-category with two objects $0$
 and $1$ and so that
 $$
 \begin{cases}
      \categ{Nat}(0,1) = \categ{Mor};
      \\
      \categ{Nat}(0,0) =  \categ{Nat}(1,1)= \ast;
      \\
      \categ{Nat}(1,0) = \emptyset.
 \end{cases}
 $$
\end{definition}

Given two strict 2-categories $\categ C, \categ D$, their product $\categ C\times \categ D$
is the strict 2-categories whose
\begin{itemize}
 \itemt set of objects is $\Ob(\categ C\times \categ D) = \Ob(\categ C)\times \Ob(\categ D)$;
 \itemt categories of morphisms are
 \[
 	\categ C\times \categ D ((X,X'),(Y,Y')) = \categ C (X,Y)\times \categ D(X',Y') .
 \]
\end{itemize}

\begin{proposition}
 The category $\categ{2-Cats}$ of small
 strict 2-categories is a cartesian closed monoidal category.
\end{proposition}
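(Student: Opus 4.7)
The plan is to exhibit an internal hom for the cartesian product explicitly and then verify the hom-tensor adjunction. Since $\mathsf{2\text{-}Cats}$ is exactly the category $\mathsf{Cat}\text{-}\mathsf{Cat}$ of categories enriched over $\mathsf{Cat}$, and $\mathsf{Cat}$ is itself cartesian closed and complete, this is an instance of the general principle that $\mathcal V\text{-}\mathsf{Cat}$ is cartesian closed whenever the enriching base $\mathcal V$ is. However, the cleanest path in the present setting is to just write the closed structure down by hand.

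First, I would check that the product construction already described in the paper indeed provides finite products in $\mathsf{2\text{-}Cats}$, including the terminal object (the 2-category with one object, one morphism and one 2-morphism). This is essentially formal: strict 2-functors out of $\mathcal C \times \mathcal D$ correspond bijectively to pairs of 2-functors, componentwise.

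Next, given two strict 2-categories $\mathcal D, \mathcal E$, I would define the internal hom $[\mathcal D, \mathcal E]$ as the strict 2-category whose objects are strict 2-functors $F\colon \mathcal D \to \mathcal E$, whose morphisms $F \to G$ are strict 2-natural transformations (families $\alpha_X\colon F(X) \to G(X)$ which strictly commute with the action of both 1-morphisms and 2-morphisms of $\mathcal D$), and whose 2-morphisms are modifications. The compositions (horizontal and vertical) are inherited pointwise from $\mathcal E$, and strict associativity/unitality in $\mathcal E$ upgrades immediately to $[\mathcal D, \mathcal E]$. I would then construct the evaluation 2-functor $\mathrm{ev}\colon [\mathcal D, \mathcal E] \times \mathcal D \to \mathcal E$, sending $(F, X)$ to $F(X)$, a 1-morphism $(\alpha, f)$ to $G(f)\circ \alpha_X = \alpha_Y \circ F(f)$, and a 2-morphism $(\Gamma, \varphi)$ to the corresponding pasting.

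The core of the proof is to establish the bijection
\[
\mathsf{2\text{-}Cats}(\mathcal C \times \mathcal D, \mathcal E) \;\cong\; \mathsf{2\text{-}Cats}(\mathcal C, [\mathcal D, \mathcal E])
\]
by currying: a 2-functor $H\colon \mathcal C \times \mathcal D \to \mathcal E$ corresponds to the 2-functor $\widehat H\colon \mathcal C \to [\mathcal D, \mathcal E]$ sending $X$ to $H(X, -)$, a 1-morphism $f\colon X\to X'$ to the 2-natural transformation $H(f, -)$, and a 2-morphism $a\colon f\to f'$ to the modification $H(a, -)$. The strict 2-functoriality on the left translates exactly into the strict 2-naturality and modification axioms on the right, and vice versa. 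Naturality in $\mathcal C, \mathcal D, \mathcal E$ is then straightforward.

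The main obstacle, and the only place where care is genuinely required, is in checking that $\widehat H$ as just described is a well-defined strict 2-functor — i.e., that the pointwise data $H(f, -)$ really does satisfy the strict axioms for a 2-natural transformation, and similarly for modifications. This is a matter of unwinding the definitions of horizontal and vertical composition in the product $\mathcal C \times \mathcal D$ and matching them against the compositions in $[\mathcal D, \mathcal E]$; no deep idea is needed, but the bookkeeping is what makes the argument non-trivial compared to the analogous statement for plain categories.
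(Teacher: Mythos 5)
Your proof is correct and takes essentially the same route as the paper: both establish closedness by exhibiting the internal hom $\TwoFun(\categ C,-)$ as a right adjoint to $-\times\categ C$. The only difference is presentational — you spell out the morphisms and 2-morphisms of the hom 2-category concretely as strict 2-natural transformations and modifications, whereas the paper packages them as 2-functors out of $\Mor\times\categ C$ and $\Nat\times\categ C$, which it identifies with the same data immediately afterwards.
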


\begin{proof}
 It suffices to show that for any small 2-category $\categ C$, the endofunctor 
$- \times \categ C$ of $\categ{2-Cats}$ has a right adjoint.
It is given by the functor $\TwoFun(\categ C,-)$ that sends a strict 2-category
$\categ D$ to the 2-category whose
\begin{itemize}
 \itemt objects are 2-functor from $\categ C \to \categ D$;
 \itemt morphisms are 2-functors from $\Mor \times \categ C$  to $\categ D$;
\itemt 2-morphisms  are 2-functors from $\Nat \times \categ C$ to $\categ D$;
\end{itemize}
\end{proof}

\begin{remark}
 This product of strict 2-categories or even of bicategories does not have good homotopical properties. For
 instance, the product $\Mor \times \Mor$ is the strictly
 commutative square instead of the square commutative up to
 a natural isomorphism.
\end{remark}

\begin{definition}
 Given two 2-functors $F,G$ from $\categ C$
 to $\categ D$, a strict natural transformation from $F$ to $G$ is just a morphism from $F$ to $G$ in the category $\TwoFun(\categ C,\categ D)$.
\end{definition}

\begin{proposition}
A strict natural transformation $A$ from $F$ to $G$ (that share the same source $\categ C$ and the same target $\categ D$) is equivalent to the data
of morphisms in $\categ D$
$$
    A(X) : F(X) \to G(X)
$$
for any object $X \in \categ C$ so that
\begin{itemize}
    \itemt for any morphism $f : X \to Y$ in $\categ C$, the following square diagram commutes in $\categ D$
    $$
    \begin{tikzcd}
         F(X) \ar[r,"A(X)"]
         \ar[d,"F(f)"]
         & G(X)
         \ar[d,"G(f)"]
         \\
         F(Y) \ar[r,"A(Y)"']
         & G(Y) ;
    \end{tikzcd}
    $$
    \itemt for any 2-morphism 
    $a : f \to g$ in $\categ C$ where $f,g: X \to Y$, we have
$$
    A(Y) \circ_h F(a) = G(a) \circ_h A(X) .
$$
\end{itemize}
In other words, this is a natural transformation
from the functor $\sk(F)$ to the functor $\sk(G)$
that satisfies the last condition.
\end{proposition}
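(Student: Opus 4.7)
The plan is to unpack the definition of a strict natural transformation via the internal hom. By the preceding discussion, a strict natural transformation $A : F \to G$ is a morphism in $\TwoFun(\categ C,\categ D)$, i.e.\ a 2-functor $\tilde A : \Mor \times \categ C \to \categ D$ whose restrictions along the two inclusions $\{0\},\{1\} \hookrightarrow \Mor$ are $F$ and $G$ respectively. The whole proof consists in translating the 2-functoriality of $\tilde A$ into the two listed conditions and, conversely, in reconstructing $\tilde A$ from the data.

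First I would extract the components: for any object $X \in \categ C$, set $A(X) := \tilde A(\iota, \id_X)$, where $\iota : 0 \to 1$ is the unique non-trivial morphism of $\Mor$. This is a morphism $F(X) \to G(X)$ in $\categ D$. The key structural remark about $\Mor \times \categ C$ is that for any 1-morphism $f : X \to Y$ in $\categ C$, the morphism $(\iota, f) : (0,X) \to (1,Y)$ factors in two ways as
\[
    (\iota, f) \;=\; (\iota, \id_Y) \circ (\id_0, f) \;=\; (\id_1, f) \circ (\iota, \id_X),
\]
and applying $\tilde A$, functoriality on hom-categories yields $A(Y) \circ F(f) = G(f) \circ A(X)$, which is exactly the first condition. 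Likewise, for a 2-morphism $a : f \to g$ in $\categ C(X,Y)$, the induced 2-morphism $(\id_\iota, a) : (\iota, f) \to (\iota, g)$ in $(\Mor \times \categ C)\bigl((0,X),(1,Y)\bigr)$ admits the two horizontal decompositions
\[
    (\id_\iota, a) \;=\; (\id_\iota, \id_Y) \circ_h (\id_{\id_0}, a) \;=\; (\id_{\id_1}, a) \circ_h (\id_\iota, \id_X),
\]
and applying $\tilde A$ gives $A(Y) \circ_h F(a) = G(a) \circ_h A(X)$, which is the second condition.

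Conversely, given morphisms $A(X) : F(X) \to G(X)$ satisfying the two conditions, I would define $\tilde A$ by declaring $\tilde A(0,-) = F$, $\tilde A(1,-) = G$, and, for a morphism $(\iota,f)$ in $\Mor \times \categ C$, setting $\tilde A(\iota, f) := A(Y) \circ F(f) = G(f) \circ A(X)$; on 2-morphisms $(\id_\iota, a)$ one sets $\tilde A(\id_\iota, a) := A(Y) \circ_h F(a) = G(a) \circ_h A(X)$. The main obstacle is the verification that this assignment respects vertical composition of 2-morphisms and horizontal composition of morphisms across the boundary of $\Mor$; each of these coherences reduces, via the interchange law in $\categ D$ and the naturality of $A$, precisely to one of the two given conditions applied either to a composite 1-morphism or to a composite 2-morphism, so no extra data or hypotheses are needed.

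Finally, a quick bookkeeping step verifies that the two assignments $\tilde A \mapsto (A(X))_X$ and $(A(X))_X \mapsto \tilde A$ are mutually inverse, yielding the stated equivalence; the last sentence of the statement, that such a datum is the same as an ordinary natural transformation $\sk(F) \Rightarrow \sk(G)$ satisfying the 2-morphism condition, is then immediate from the definition of $\sk$.
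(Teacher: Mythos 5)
Your proof is correct and takes essentially the only available route: the paper itself dismisses this as ``Straightforward,'' and your argument simply unpacks the definition of a morphism in $\TwoFun(\categ C,\categ D)$ as a 2-functor $\Mor\times\categ C\to\categ D$, using the two factorisations of $(\iota,f)$ and of $(\id_\iota,a)$ in the product 2-category. The only cosmetic quibble is that $(\id_\iota,\id_Y)$ in your horizontal decomposition should read $(\id_\iota,\id_{\id_Y})$, the identity 2-morphism on the 1-morphism $(\iota,\id_Y)$.
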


\begin{proof}
 Straightforward.
\end{proof}

\begin{corollary}
    Two strict natural transformations
    $A,A': F \to G$ are equal if and only if the underlying 
    natural tranformations from $\sk(F)$ to $sk(G)$ are equal.
\end{corollary}

\begin{proof}
    This just follows from the fact that 
    a strict natural transformations from $F$ to $G$ is
    the data of a natural transformation from
    $\sk(F)$ to $sk(G)$ that satisfies an additional condition.
\end{proof}

\begin{definition}
Given two 2-functors $F,G$ from $\categ C$
 to $\categ D$ and two strict natural transformations $A$ and $A'$ from $F$ to $G$, a modification from $A$ to $A'$ is a 2-morphism from $A$ to $A'$ in the 2-category $\TwoFun(\categ C,\categ D)$.
\end{definition}

\begin{remark}
 We will work in a framework strict enough to avoid modifications.
\end{remark}

\begin{definition}
A 2-functor $F : \categ{C} \to \categ D$ is strictly fully faithful if for any objects $X,Y \in \Ob(\categ C)$, the functor
$$
F(X,Y) : \categ C(X,Y) \to \categ D(F(X),F(Y))
$$
is an isomorphism of categories.
\end{definition}

\begin{definition}
A 2-functor $F : \categ{C} \to \categ D$ is strictly essentially surjective if the underlying functor $\sk(F)$ is essentially surjective.
\end{definition}

\begin{definition}
A 2-functor $F : \categ{C} \to \categ D$ is an iso-equivalence if there exists a 2-functor
$G : \categ D \to \categ C$ and
strict natural isomorphisms
\begin{align*}
    \id_{\categ C} &\simeq G \circ F;
    \\
    \id_{\categ D} &\simeq F \circ G.
\end{align*}
\end{definition}

\begin{remark}
In other words
$F$ is an iso-equivalence if it is an equivalence in the 2-category made up of 2-categories, 2-functors and strict natural transformations.
\end{remark}

\begin{proposition}
A 2-functor $F$ is an iso-equivalence if and only if it is strictly fully faithful and strictly essentially surjective.
\end{proposition}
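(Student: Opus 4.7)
The plan follows the classical proof that an equivalence of categories is the same as a fully faithful and essentially surjective functor, adapted to the strict 2-categorical setting where the hom-categories themselves (not just isomorphism classes of objects) must match on the nose.

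Forward direction. Suppose $F : \categ C \to \categ D$ admits a 2-functor $G$ together with strict natural isomorphisms $\alpha : \id_{\categ C} \simeq GF$ and $\beta : \id_{\categ D} \simeq FG$. Then $F$ is strictly essentially surjective: for each $Y \in \Ob(\categ D)$, $\beta(Y) : FG(Y) \simeq Y$ is an isomorphism in $\sk(\categ D)$. For strict full faithfulness, consider the composite $G(FX,FY) \circ F(X,Y) : \categ C(X,Y) \to \categ C(GFX,GFY)$. The naturality axioms for $\alpha$ (on both morphisms and 2-morphisms) say precisely that this composite is conjugation by $\alpha$, i.e.\ sends $f$ to $\alpha(Y) \circ f \circ \alpha(X)^{-1}$ and $a$ to $\alpha(Y) \circ_h a \circ_h \alpha(X)^{-1}$; since $\alpha(X)$ and $\alpha(Y)$ are isomorphisms in $\sk(\categ C)$, this is an isomorphism of categories. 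The analogous argument with $\beta$ shows that $F(GFX,GFY) \circ G(FX,FY)$ is an isomorphism. Hence $F(X,Y)$ has both a left and a right inverse among functors and is therefore itself an isomorphism of categories.

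Backward direction. Suppose $F$ is strictly fully faithful and strictly essentially surjective. For every $Y \in \Ob(\categ D)$, use the axiom of choice to pick an object $G(Y) \in \Ob(\categ C)$ together with an isomorphism $\beta(Y) : FG(Y) \simeq Y$ in $\sk(\categ D)$. For a morphism $f : Y \to Y'$ in $\categ D$, the morphism $\beta(Y')^{-1} \circ f \circ \beta(Y) \in \categ D(FG(Y),FG(Y'))$ has, by strict full faithfulness, a unique preimage under the isomorphism $F(G(Y),G(Y'))$; call it $G(f)$. Similarly, for a 2-morphism $b : f \to f'$, define $G(b)$ as the unique preimage of $\beta(Y')^{-1} \circ_h b \circ_h \beta(Y)$. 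That $G$ so defined is a 2-functor, and that $\beta$ is a strict natural isomorphism $\id_{\categ D} \simeq FG$, both follow by applying $F$, unfolding definitions, and using uniqueness of preimages: every equation to be checked takes place in a hom-category of the form $\categ C(G(Y),G(Y'))$, and $F$ sends both sides to the same expression in $\categ D$ built from $\beta$, $f$'s, and $b$'s.

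For the other natural isomorphism, observe that for each $X \in \Ob(\categ C)$, the morphism $\beta(FX)^{-1} : FX \to FGFX$ is an iso in $\sk(\categ D)$, hence its unique preimage $\alpha(X) : X \to GFX$ under the isomorphism $F(X,GFX)$ is an isomorphism in $\sk(\categ C)$ (isomorphisms are reflected since $F(X,GFX)$ is an isomorphism of categories). The naturality of $\alpha$ with respect to morphisms $f : X \to Y$ and 2-morphisms $a : f \to g$ in $\categ C$ is checked by applying $F$ to both sides of the desired equalities: after substituting $F\alpha(X) = \beta(FX)^{-1}$ and $F\alpha(Y) = \beta(FY)^{-1}$ and using the already-established naturality of $\beta$, both sides agree in $\categ D$, so by strict faithfulness they agree in $\categ C$.

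The main obstacle is bookkeeping rather than mathematical depth: one must handle the morphism-level and the 2-morphism-level naturality squares in parallel, while remembering that strict full faithfulness means equalities of 2-morphisms (not just invertibility up to coherent iso) — it is precisely this strictness that lets the construction of $G$, $\alpha$, $\beta$ go through without introducing coherence data or modifications, which the paper has deliberately set aside.
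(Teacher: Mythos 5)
Your proof is correct and is precisely the classical argument that the paper itself invokes — its entire proof is ``This follows from the same arguments as for categories,'' and you have simply written those arguments out, with the right attention to the extra 2-morphism-level naturality conditions that the strict setting requires.
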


\begin{proof}
 This follows from the same arguments as for categories.
\end{proof}

\subsection{Monoidal context}

We give here the definition of a monoidal context, which could also be called an almost strict symmetric monoidal strict 2-category. Indeed, this is the data of a strict 2-category together with a symmetric monoidal structure which is as strict as symmetric monoidal structures on categories.

Any 2-category is equivalent to a strict 2-category. But this is not true for 3-categories. Likewise we do not expect our notion of monoidal context to encompass all symmetric monoidal 2-categories.

\begin{definition}
 A monoidal context is the data a strict 2-category $\categ C$
 together with
\begin{itemize}
 \itemt a 2-functor $-\otimes - : \categ C \times \categ C \to \categ C$;
 \itemt a 2-functor $\II : \ast \to \categ C$;
  \itemt a strict natural isomorphism call the associator
  $(X \otimes Y) \otimes Z \simeq X \otimes (Y \otimes Z)$
  \itemt a strict natural isomorphism called 
  the commutator
  $$
    X \otimes Y  \simeq Y \otimes X;
  $$
  \itemt two strict natural isomorphisms called the unitors
  $$
    \II \otimes X \simeq X \simeq X \otimes \II 
  $$
\end{itemize}
that makes the skeleton $\sk (\categ C)$ a symmetric monoidal category (for instance they satisfy the pentagon identity and the triangle identity).
\end{definition}

\begin{definition}
    A monoidal context is small if its underlying  strict 2-category
    is small.
\end{definition}

We now define morphisms between monoidal contexts that we call (lax) context functors. This is an almost strict version of (lax) symmetric monoidal 2-functor.

\begin{definition}
A lax context functor between two monoidal contexts $\categ C, \categ D$ is the data of
\begin{itemize}
\itemt a 2-functor $F: \categ C \to \categ D$;
\itemt a strict natural transformation $F(X) \otimes F(Y) \to F(X \otimes Y)$;
\itemt a strict natural transformation $\II \to F(\II)$;
\end{itemize}
that makes the functor between skeletons $\sk (F)$ a lax symmetric monoidal functor.
Let $\mathsf{Context}_{\lax}$ be the category of small
monoidal contexts and lex context functors.
\end{definition}

\begin{definition}
A lax context functor between two monoidal contexts $F : \categ C \to \categ D$ whose structural strict natural transformation
 $$
F(X) \otimes F(Y) \to F(X \otimes Y), \quad \II \to F(\II)
$$
are isomorphisms (resp. identities) is called a strong
context functor (resp. a strict context functor).
\end{definition}

\begin{remark}
 A strict context functor is equivalently a 2-functor that commute with the monoidal context structures. 
\end{remark}

\begin{definition}
Given two lax context functors $F,G : \categ C
\to \categ D$, a monoidal natural transformation
between them is a strict natural transformation
$A : F \to G$ so that the induced natural transformation from $\sk(F)$ to $\sk(G)$ is monoidal. In other words, $A$ commutes with the structural
natural transformations of the two lax context functors.
\end{definition}

\subsection{Mac Lane's coherence theorem}

 Let us consider a monoidal context $\categ C$.
 In particular the skeleton $\mathrm{sk}(\categ C)$ is a
 symmetric monoidal category.

 \begin{definition}\label{definitiontreefunct}
    For any planar tree $t$ (see Appendix \ref{appendixtree}
    for a precise definition) with $n$ leaves
    whose nodes have arity 2 or 0
    one obtains a 2-functor
    $$
\otimes_{t}: \categ C^n \to \categ C
    $$
    as follows:
    \begin{itemize}
        \itemt if $t$ has no node, then $\otimes_t = \id$;
        \itemt if $t$ is the arity 0 corolla, then
        $\otimes_t$ is the 2-functor $\ast \to \categ C$
        corresponding to the object $\II$;
        \itemt if $t$ is the arity 2 corolla, then 
        $\otimes_t  = - \otimes -$;
        \itemt for any larger tree made up of an arity 2-corolla
        that supports a left tree $t_l$ and a right tree $t_r$, then
        $$
        \otimes_{t} = (- \otimes -) \circ (\otimes_{t_l} \times \otimes_{t_r}) ;
        $$
    \end{itemize}
 \end{definition}

 \begin{definition}\label{definitiontreefunct2}
    From any planar tree $t$ with $n$ leaves
    whose nodes have arity 2 or 0 and for
    any permutation $\sigma \in \mathbb{S}_n$,
    one obtains a 2-functor
    $$
\otimes_{t, \sigma}: \categ C^n \to \categ C
    $$
    as follows the composition
        $$
        \categ C^n \xrightarrow{\sigma^\ast} \categ C^n
        \xrightarrow{\otimes_t} \categ C.
        $$
 \end{definition}

 \begin{definition}
    Let $n$ be natural integer.
     The $n$-structural groupoid of the monoidal context $\categ C$
     is the subcategory of $\TwoFun(\categ C^n, \categ C)$
     whose objects are the functors $\otimes_{t, \sigma}$
     for $t$ a tree with $n$ leaves and whose nodes have arity $2$ $0$
     and for $\sigma \in \mathbb{S}_n$; the morphisms are the strict natural
     transformations generated by the following maps
     \begin{enumerate}
         \item  for any tree $t$ and any subtree of the form
         $\treeassleft{}$, let $t'$ be the tree obtained from $t$ by
         replacing the subtree $\treeassleft{}$ by $\treeassright{}$; then the
         associator of $\categ C$ gives us a strict natural isomorphism
         $$
        \otimes_{t, \sigma} \simeq \otimes_{t', \sigma};
         $$
         for any permutation $\sigma$;
         \item similarly as in the previous recipe one can replace
         subtrees of the form $\treeunitleft{}$ (resp. $\treeunitright{}$)
         by a simple edge and use
         the left unitor (resp. right unitor) to obtain
         strict natural isomorphisms;
         \item for any tree $t$ and any binary node $n$, let
         $t_l, t_r$ be the subtrees made up edges above
         respectively the left edge
         of $n$ and the right edge
         of $n$ and let $t'$ be the tree obtrained from $t$ by swapping
         $t_1$ and $t_2$. Let $\rho$ be the permutation so that
         $$
\begin{cases}
    \rho(i) = i \text{ if }i \leq a;
    \\
    \rho(a+j) = a + l_2 +j \text{ if }1 \leq  j  \leq l_1;
    \\
    \rho(a+l_1+j) = a +j \text{ if }1 \leq  j  \leq l_2;
    \\
    \rho(i) = i \text{ if }i > a+l_1+l_2.
\end{cases}
         $$
         The commutator of $\categ C$ gives us a strict natural isomorphism
         $$
        \otimes_{t, \sigma} \simeq \otimes_{t', \rho \circ \sigma};
         $$
         for any permutation $\sigma$.
     \end{enumerate}
 \end{definition}

 \begin{theorem}[Mac Lane's coherence theorem, \cite{maclane1963natural}]
    For any natural integer $n$, the $n$-structural groupoid
    of $\categ C$ is contractible.
 \end{theorem}

 \begin{proof}
     Actually Mac Lane's result deals with monoidal categories
     instead of monoidal contexts. Its extension to monoidal
     contexts is just a consequence of the fact that the $n$-structural
     groupoid of $\categ C$ is the same as that of $\sk(\categ C)$.

 \end{proof}

 \begin{remark}\cite{Kelly74}\label{propositionunitcomalg}
    In particular, in a monoidal context,
    the left unitor $\II \otimes \II \to \II$ applied to the unit
    $\II$ equals its right unitor.
    This gives $\II$ the structure of a strict unital commutative algebra
    whose unit is just the identity $\II = \II$.
   \end{remark}


\subsection{Multiple tensors}

Let $\categ C$ be a monoidal context.
One can define multiple tensors as follows.

\begin{definition}
    Let $n > 2$ be a natural integer. The 2-functor
    $$
    \otimes_n : (X_1,  \ldots, X_n) \in \categ C^n \to 
    X_1 \otimes \cdots\otimes X_n \in \categ C
    $$
    is the colimit of the diagram of 2-functors
    whose objects are the 2-functors $\otimes_t$ where $t$ is a binary tree
    with $n$ leaves and whose morphisms are those of the $n$-structural
    groupoid.
    
    Then, for any planar tree $t$ with $m$ leaves and whose nodes
    have arity $0,2$ or more than $2$, and any
    permutation $\sigma \in  \mathbb S_m$, one can define
    the 2-functors
    $$
    \otimes_t, \otimes_{t, \sigma}: \categ C^m \to \categ C
    $$
    in the same way as in Definition \ref{definitiontreefunct}
    and Definition \ref{definitiontreefunct2}.
\end{definition}

\begin{definition}
    Let $n$ be natural integer.
     The extended $n$-structural groupoid of the monoidal context $\categ C$
     is the subcategory of $\TwoFun(\categ C^n, \categ C)$
     whose objects are the functors $\otimes_{t, \sigma}$
     for $t$ a tree with $n$ leaves and whose nodes have arity $0$, $2$
     or more than $2$ and for $\sigma \in \mathbb{S}_n$; the morphisms
     are the strict natural
     transformations generated by the morphisms of the
     (non extended) $n$-structural groupoid
     and, for any permutation $\sigma$,
     for any tree $t$ and any binary subtree $s$ with $m$-leaves
     the strict natural isomorphism
     $$
    \otimes_{t, \sigma} \simeq \otimes_{t/s, \sigma}
     $$
     induced by the canonical map $\otimes_s \to \otimes_m$,
     where $t/s$ is the tree obtained from $t$ by contracting $s$
     into a $m$-corolla.
\end{definition}

\begin{corollary}
    Let $n$ be a natural integer. The extended $n$-structural
    groupoid of $\categ C$ is contractible.
\end{corollary}

\begin{proof}
    Straightforward.
\end{proof}

Finally for any lax context functor $F: \categ C \to \categ D$,
the lax structure induces
canonical strict natural transformations
    $$
    \otimes_{t, \sigma} \circ F^n \to F \circ  \otimes_{t, \sigma}
    $$
    for any tree with $n$ leaves and whose nodes have arity different
    from $1$ and for any permutation $\sigma \in \mathbb{S}_n$.
    Moreover, for any canonical isomorphism $\otimes_{t, \sigma}
    \simeq \otimes_{t', \sigma'}$,
    the following square diagram commutes
    $$
\begin{tikzcd}
    \otimes_{t, \sigma} \circ F^n
    \ar[r] \ar[d, "\simeq"]
    & F \circ  \otimes_{t, \sigma}
    \ar[d, "\simeq"]
    \\
    \otimes_{t', \sigma'} \circ F^n
    \ar[r]
    & F \circ  \otimes_{t', \sigma'} .
\end{tikzcd}
    $$

\subsection{Cartesian monoidal structures}

\begin{definition}
Let $\categ C$ be a strict 2-category and $X,Y$ be two objects.
The strict product of $X$ and $Y$ if it exists is an object $X,\times Y$
(defined up to a unique isomorphism)
 equipped with two morphisms
\begin{align*}
    X \times Y &\to X;
    \\
    X \times Y &\to Y;
\end{align*}
so that for any object $Z$ the functor
$$
    \categ C(Z, X \times Y) \to  \categ C(Z,X) \times \categ C(Z,Y)
$$
is an isomorphism of categories.
\end{definition}

\begin{definition}
A strict final element in a strict 2-category $\categ C$ is an object $\ast$ defined up to a unique isomorphism so that for any object $Z$ the functor
$$
    \categ C(Z, \ast) \to  \ast
$$
is an isomorphism of categories.
\end{definition}

\begin{definition}
A strict 2-category is said to have strict finite  products if it has strict products of any pair of objects and if it as a strict final element.
\end{definition}

\begin{proposition}
Suppose that the strict 2-category $\categ C$ has strict finite products. Then $\categ C$ gets from  the product and the final element the structure of a monoidal context.
\end{proposition}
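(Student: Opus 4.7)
The plan is to lift the classical argument that a category with finite products admits a cartesian symmetric monoidal structure to the present 2-categorical setting, exploiting the fact that strict products are defined by an \emph{isomorphism} (not merely an equivalence) of categories $\categ C(Z, X\times Y) \simeq \categ C(Z,X) \times \categ C(Z,Y)$. This strictness is exactly what is needed to land in a monoidal context, which is also an almost strict notion.

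First, I would define the tensor $-\otimes - : \categ C \times \categ C \to \categ C$ by $X \otimes Y := X \times Y$ on objects, and use the universal property to extend to morphisms and 2-morphisms. Concretely, given morphisms $f_1 : X_1 \to X_1'$, $f_2 : X_2 \to X_2'$ in $\categ C$, the pair $(f_1 \circ p_{X_1}, f_2 \circ p_{X_2})$ corresponds via the inverse of the universal isomorphism of categories to a unique morphism $f_1 \times f_2 : X_1 \times X_2 \to X_1' \times X_2'$, and similarly for 2-morphisms. The fact that the universal property is stated as an isomorphism of categories ensures that this construction is strictly 2-functorial, not merely pseudo-functorial. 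The unit 2-functor $\II : \ast \to \categ C$ is then defined by picking out a chosen strict final object.

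Next, I would construct the structural strict natural isomorphisms. For the associator, the objects $(X \times Y) \times Z$ and $X \times (Y \times Z)$ both represent the 2-functor $W \mapsto \categ C(W,X) \times \categ C(W,Y) \times \categ C(W,Z)$ in the sense of isomorphism of categories, so the identity natural transformation between these two representables corresponds to a canonical isomorphism, strictly natural in $X,Y,Z$ because the universal isomorphisms are themselves strictly natural. The commutator comes from the evident swap automorphism of $\categ C(W,X) \times \categ C(W,Y)$, and the unitors from the fact that $\categ C(W,\ast)$ is the terminal category, so $\categ C(W, \ast \times X) \simeq \categ C(W, X)$ naturally in $W$ and $X$.

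Finally, I would verify the pentagon, triangle, and symmetry axioms at the level of $\sk(\categ C)$. Since the structural isomorphisms come from the universal property of the product, both legs of the pentagon (resp.\ the triangle, hexagon) correspond to the same identity under the universal isomorphism, so the axioms hold by the uniqueness part of the universal property --- exactly as in the classical 1-categorical case. The main bookkeeping point, and the place where strictness is essential, is checking that each structural transformation is strictly natural with respect to \emph{2-morphisms}; this amounts to noting that the product functor $\categ C(W,-) \times \categ C(W,-)$ on the factors is a genuine 2-functor, so the naturality isomorphisms transport 2-morphisms coherently. I do not expect a genuine obstruction: the argument is essentially the 1-categorical proof, applied levelwise thanks to the strict form of the universal property.
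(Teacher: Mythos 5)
Your proposal is correct and follows essentially the same route as the paper, whose proof merely names the canonical associator and unitors; you have filled in the details (strict 2-functoriality of the product via the isomorphism-of-categories form of the universal property, and the coherence axioms via uniqueness) that the paper leaves implicit.
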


\begin{proof}
 The associator is the canonical natural transformation
 $$
 (X \times Y) \times Z \simeq X \times (Y \times Z)
 $$
 and the unitors are the canonical natural transformations
 $$
    X \times \ast  \simeq X \simeq \ast \times X .
 $$
\end{proof}

\begin{definition}
 A cartesian monoidal context is a strict 
 2-category that has strict finite products and that is equipped with the induced structure of a  monoidal context.
\end{definition}

\begin{definition}
Let $\categ C, \categ D$ be two strict 2-categories with strict finite products. A 2-functor $F:\categ C \to \categ D$ preserve strict products if the canonical morphisms
\begin{align*}
    F(X\times Y) \to F(X) \times F(Y) ;
    \\
    F(\ast) \to \ast;
\end{align*}
are isomorphisms.
\end{definition}

\begin{proposition}
Let $\categ C, \categ D$ be two cartesian monoidal contexts.
A 2-functor $F : \categ C \to \categ D$ that preserves strict finite products has the canonical structure of a context functor.
\end{proposition}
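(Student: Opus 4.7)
The plan is to invert the canonical comparison maps. For each pair $(X,Y)$, let
\[ \varphi_{X,Y} \colon F(X\times Y) \to F(X) \times F(Y) \]
be the unique morphism in $\categ D$ induced by $F(\pi_1), F(\pi_2)$ via the strict universal property of $F(X) \times F(Y)$, and let $\varphi_\ast \colon F(\ast) \to \ast$ be the unique morphism to the terminal object. By the preservation hypothesis both are isomorphisms, so I define the structural transformations of the context functor as their inverses $\varphi_{X,Y}^{-1}$ and $\varphi_\ast^{-1}$.

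The first step is to show that $\varphi$ is a strict natural transformation between the two 2-functors $F\circ(-\times -)$ and $(-\times -)\circ(F\times F)$ from $\categ C \times \categ C$ to $\categ D$. For a 1-morphism $(f,g)$ in $\categ C \times \categ C$, the naturality square reduces, after postcomposing with each projection $\pi_i \colon F(X') \times F(Y') \to F(X')$ or $F(Y')$, to the identity $F(\pi_i) \circ F(f \times g) = F(f) \circ F(\pi_i)$, which holds by 2-functoriality of $F$ applied to the naturality of $\pi_i$. For a 2-morphism $(a,b)$, the required identity $\varphi_{X',Y'} \circ_h F(a\times b) = (F(a) \times F(b)) \circ_h \varphi_{X,Y}$ follows from the 2-dimensional universal property of the strict product in $\categ D$: both 2-cells agree after whiskering by the projections to $F(X')$ and $F(Y')$. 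The same check works for $\varphi_\ast$, trivially. Inverting then yields strict natural transformations in the direction required by the definition of a context functor.

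The second step is to verify that $\sk(F)$ with these structure maps is symmetric monoidal. Each coherence axiom—pentagon, triangle, symmetry—is an equation in $\sk(\categ D)$ between morphisms whose codomain has the form $F(X_1 \times \cdots \times X_n)$. By iterating the preservation hypothesis, this object equipped with the morphisms $F(\pi_i)$ is a strict product of $F(X_1),\dots,F(X_n)$ in $\categ D$, so two morphisms into it agree if and only if they agree after postcomposition with each $F(\pi_i)$. In every case this postcomposition collapses both sides of the diagram to the same canonical projection between iterated products in $\categ D$, which settles the axiom.

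The main obstacle, insofar as there is one, is the 2-dimensional naturality in the first step, i.e.\ checking the condition $A(Y) \circ_h F(a) = G(a) \circ_h A(X)$ of the paper's characterisation of strict natural transformations; the remainder is bookkeeping driven entirely by the universal property of strict products.
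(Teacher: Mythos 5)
Your proof is correct and follows essentially the same route as the paper, whose entire argument is the one-line observation that the structure is given by the canonical isomorphism $F(X\times Y)\simeq F(X)\times F(Y)$. You simply fill in the details the paper leaves implicit (the inversion to get the direction required by the definition of a context functor, the 2-dimensional naturality via the strict universal property, and the coherence checks by postcomposition with projections), all of which are sound.
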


\begin{proof}
 This structure is given by the strict natural
 transformation $F(X \times Y) \simeq F(X) \times F(Y)$.
\end{proof}

\subsection{Opposite structures}

Let $(\categ C, \otimes, \II)$ be a monoidal context. Since in some sense this monoidal context is a 3-categorical structure, there are three ways to inverse structures.
\begin{itemize}
    \itemt one can take opposite morphisms;
    \itemt or take opposite 2-morphisms;
    \itemt or take the opposite monoidal structure.
\end{itemize}

\begin{definition}
Let $\categ C^{\op}$ be the monoidal context with the same object as $\categ C$ and so that 
$$
\categ C^{\op} (X,Y) = \categ C (Y,X)
$$
for any two objects $X,Y$.
\end{definition}

\begin{definition}
Let $\categ C^{co}$ be the monoidal context with the same object as $\categ C$ and so that 
$$
\categ C^{co} (X,Y) = \categ C (X,Y)^{op}
$$
for any two objects $X,Y$.
\end{definition}

\begin{definition}
Let $\categ C^{\tr} = (\categ C, \otimes^{tr}, \II)$ (where tr stands for "transposition") be the monoidal context with the same underlying 2-category $\categ C$ but whose monoidal structure is defined by
$$
 X \otimes^{tr} Y = Y \otimes X
$$
for any two objects $X,Y$.
The associator, unitors and commutator are given by that of
the monoidal context $(\categ C, \otimes,\II)$.
\end{definition}

It is possible to apply more than one of these transformation to obtain $\categ C^{coop},\categ C^{optr}, \categ C^{cotr}$ and $\categ C^{cooptr}$.


\subsection{Some monoidal contexts}

The main example of a monoidal context
is that of categories.

\begin{definition}
    Let $\categ{Cats}$ be the strict
    2-category of small categories, functors and
    natural transformations. It has strict finite products
    and is thus a cartesian
    monoidal context.
\end{definition}

Here are two other examples of monoidal contexts.
\begin{itemize}
    \itemt The 2-category of small strict 2-categories,
    2-functors and
    strict natural transformations form a cartesian monoidal
    context.
    \itemt The 2-category of small monoidal contexts,
    lax 2-functors and
    monoidal natural transformations form a cartesian monoidal
    context.
\end{itemize}
We will not deal directly with these two cartesian
monoidal contexts but with their underlying cartesian
symmetric monoidal categories $\categ{2-Cats}$ and
$\categ{Contexts}_\lax$.

\begin{definition}
Let $\categ{Functors}$ be the strict 2-category $\TwoFun(\Mor, \categ{Cats})$, that is
\begin{itemize}
    \itemt its objects are functors $F : \categ C \to \categ D$ between small categories;
    \itemt its morphisms from $F_1$ to $F_2$ are pairs of functors $(S,T)$ so that the following square diagram of categories is strictly commutative
    \[
    \begin{tikzcd}
         \categ C_1
         \ar[r,"S"] \ar[d,"F_1"]
         & \categ C_2
         \ar[d,"F_2"]
         \\
         \categ D_1
         \ar[r,swap, "T"]
         & \categ D_2;
    \end{tikzcd}
    \]
    \itemt its 2-morphisms from $(S_1,T_1)$ to
    $(S_2,T_2)$ are natural transformations $A_S : S_1 \to S_2$ and $A_T: T_1 \to T_2$ so that
    \begin{align*}
        A_T \circ_h F_1 = F_2 \circ_h A_S .
    \end{align*}
\end{itemize}
\end{definition}

\begin{proposition}
The 2-category $\categ{Functors}$ form a cartesian monoidal context.
Moreover, the two 2-functors to categories (target and source) preserve strict finite products.
\end{proposition}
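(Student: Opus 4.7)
The plan is to construct strict finite products in $\categ{Functors}$ pointwise, using the fact that $\categ{Cats}$ already has strict finite products as a cartesian monoidal context, and then to reduce the universal property in $\categ{Functors}$ to the universal property in $\categ{Cats}$. The cartesian monoidal context structure will then follow from the earlier proposition that turns strict finite products into such a structure; preservation by source and target will be immediate from the explicit construction.

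First I would take the terminal object of $\categ{Functors}$ to be the identity $\id_{\ast} : \ast \to \ast$ of the terminal category. For any object $F : \categ C \to \categ D$, the only candidate morphism from $F$ to $\id_{\ast}$ is the pair of unique functors $(\categ C \to \ast,\ \categ D \to \ast)$, and the associated square commutes trivially; similarly, the only 2-morphism between two such morphisms is the pair of identities. Hence $\categ{Functors}(F, \id_{\ast})$ is the terminal category.

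For binary products, given $F_i : \categ C_i \to \categ D_i$ for $i = 1,2$, I would set their product in $\categ{Functors}$ to be the functor $F_1 \times F_2 : \categ C_1 \times \categ C_2 \to \categ D_1 \times \categ D_2$ obtained from the cartesian product in $\categ{Cats}$, equipped with the projection morphisms assembled from the projections at source and target. Given a third object $G : \categ E \to \categ F$, a morphism of $\categ{Functors}$ from $G$ to this product is a pair $(S,T)$ with $(F_1 \times F_2) \circ S = T \circ G$. By the universal property of strict products in $\categ{Cats}$, $S$ corresponds uniquely to a pair $(S_1, S_2)$ and $T$ to a pair $(T_1, T_2)$, and the commuting square for $(S,T)$ unpacks precisely into the conjunction of the two commuting squares $F_i \circ S_i = T_i \circ G$. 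An identical argument at the 2-morphism level then yields an isomorphism of categories
$$
\categ{Functors}(G, F_1 \times F_2) \xrightarrow{\ \simeq\ } \categ{Functors}(G, F_1) \times \categ{Functors}(G, F_2).
$$

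The second claim is then immediate from the construction: the source of $F_1 \times F_2$ is $\categ C_1 \times \categ C_2$ and its target is $\categ D_1 \times \categ D_2$, while the source and target of $\id_{\ast}$ are both $\ast$, so the canonical comparison maps produced by the source and target 2-functors are identities and in particular isomorphisms. There is no serious obstacle anywhere: the argument is essentially an unpacking of the strict universal property of products in $\categ{Cats}$, together with routine book-keeping for the square-commutativity condition defining morphisms and 2-morphisms in $\categ{Functors}$.
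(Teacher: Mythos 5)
Your proof is correct and is exactly the expansion of the argument the paper leaves implicit (its proof is simply ``Straightforward''): products and the terminal object in $\categ{Functors}$ are built pointwise from those in $\categ{Cats}$, the strict universal property is checked by unpacking the commuting-square condition componentwise at both the morphism and 2-morphism level, and preservation by source and target is immediate from the construction. No issues.
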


\begin{proof}
 Straightforward.
\end{proof}

One can also work with subuniverses ($\mathcal U$
and $\mathcal V$).

\begin{definition}
    A $\mathcal U$-strict 2-category $\categ C$
    is a strict 2-category whose set of objects
    is $\mathcal U$-large and so that
    for any two objects $x,y$, the category
    $\categ C(x,y)$ is $\mathcal{U}$-small.
    
    A $\mathcal U$-strict 2-category is $\mathcal U$-small
    if its set of objects is $\mathcal U$-small.
    
    One defines similarly $\mathcal V$-strict 2-categories
    and $\mathcal V$-small strict 2-categories.
\end{definition}

\begin{definition}
    Let $\categ{Cats}_{\mathcal U}$ be the
    $\mathcal V$-strict 2-category (in
    particular it is a $\mathcal W$-small)
    whose
    \begin{itemize}
        \itemt objects are $\mathcal U$-categories
        (that are in particular $\mathcal V$-small categories);
        \itemt morphisms are functors;
        \itemt 2-morphisms are natural transformations.
    \end{itemize}
    This is actually the full sub-2-category
    of the strict 2-category of small
    categories spanned by $\mathcal U$-categories.
\end{definition}

\begin{proposition}
    The small 2-category $\categ{Cats}_{\mathcal U}$
    has strict finite products and is hence a small
    cartesian monoidal context.
\end{proposition}

\begin{proof}
    It suffices to notice that the product of two
    $\mathcal U$-categories is again a $\mathcal U$-category.
    This follows from the fact that for any two elements $x,y$ of
    $\mathcal U$, the pair $(x,y)$ still belongs to $\mathcal U$.
\end{proof}


\section{Categorical operads acting on a monoidal context}

To describe algebraic structures in a monoidal context $\categ C$, one general way is
to consider actions of another monoidal context on $\categ C$, that is context functors from some monoidal context $\categ A$ to $\categ C$. We choose here to deal with the restricted framework of action of operads enriched in categories that we call categorical operads.

\subsection{Operads}

\begin{definition}
    A categorical collection $X$ is the data of a set of colours $O=\Ob(X)$
    (also called objects)
    and a functor
    $$
X : \coprod_{n \geq 0} O^n \times O \to \Cats,
    $$
    that is equivalently the data of small categories
    $$
X(c_1, \ldots, c_n;c)
    $$
    for any natural integer $n$ and any elements
    $(c_1, \ldots, c_n,c) \in O^{n+1}$.
    A morphism $f$ of categorical collections from $X$
    to $Y$ is the data of a function
    $$
f(-) : \Ob(X)\to \Ob(Y)
    $$
    together with functors 
    $$
    f(c_1, \ldots, c_n;c): X(c_1, \ldots, c_n;c) \to
    Y(f(c_1), \ldots, f(c_n);f(c)).
    $$
    This defines the category $\categ{Collections}$
    of categorical collections.
\end{definition}

\begin{definition}
    Let $O$ be a set of objects. A $O$-coloured 
    tree is the data of a tree $t$ and a function
    $$
\mathrm{col}_t : \mathrm{edges}(t) \to O.
    $$
\end{definition}

\begin{definition}
Given a categorical collection $X$,
a $\Ob(X)$ coloured tree $t$ and a node
$v$ of $t$, we denote
$$
X(v) = X(c_1, \ldots, c_n; c)
$$
where $(c_i)_{i=1}^n$ are the colours of the
ordered inputs of $v$ and
$c$ is the colour of its output.
Then, we denote
$$
X(t) = \prod_{v \in \mathrm{nodes}(t)} X(v).
$$
\end{definition}

\begin{definition}
    For any categorical collection $X$, let
    $\mathrm{T}_{\pl}(X)$ be the categorical collection
    with the same set of objects as $X$ and so that
    $$
\mathrm{T}_{\pl}(X)(c_1, \ldots, c_n; c) = \coprod_t X(t)
    $$
    where the coproduct is taken over the
    small set of equivalence classes of $\Ob(X)$-coloured trees
    $t$ whose root is coloured by $c$ and whose leaves are
    coloured by $(c_1, \ldots, c_n)$ (we respect the order on leaves).
    This defines an endofunctor of
    the category of categorical collections
    that has the canonical structure of a monad whose
    \begin{itemize}
        \itemt unit is given by the inclusion of corollas into
        all trees;
        \itemt product is given by forgetting partitions in
        partitioned trees. 
    \end{itemize}
\end{definition}

\begin{definition}
Categorical planar operads $\catoperad P$ are algebras over the
planar tree monad $\mathrm{T}_{\pl}$ in categorical collections.
Equivalently, a categorical planar operad $\catoperad P$
is a categorical collection equipped with compositions
\begin{align*}
    \catoperad P(\underline c ; c)
\times \catoperad P(\underline d ; c_i)
&\xrightarrow{\triangleleft_i}
\catoperad P(\underline c
    \triangleleft_i \underline d ; c) \\
    1_c &\in \catoperad P(c;c)
\end{align*}
that satisfy associativity and unitality conditions and where 
\begin{align*}
    \underline c &=(c_1, \ldots,c_n);
    \\
    \underline d &=(d_1, \ldots,d_m);
    \\
    \underline c
    \triangleleft_i \underline d
    &=
    (c_1, \ldots,c_{i-1},d_1, \ldots,d_m,c_{i+1},\ldots, c_n).
\end{align*}
A morphism of categorical planar operads from $\catoperad P$ to $\catoperad Q$
is a morphism of categorical collections $f$
that commutes strictly with the operadic compositions and units.
We denote $\Operad_{\cat, \pl}$ the category of
categorical planar operads.
\end{definition}

\begin{definition}
    Let $\catoperad P$ be a categorical planar operad. Then
    we denote $\catoperad P_{\mathbb{S}}$ the
    categorical planar operad with the same objects as $\catoperad P$
    and so that
    $$
    \catoperad P_{\mathbb{S}}(\underline c; c)
    =
    \coprod_{\sigma \in \mathbb S_n}
    \catoperad P(\sigma^\ast(\underline c); c) \times \{\sigma\}
    $$
    where
\begin{align*}
    \underline c &=(c_1, \ldots,c_n);
    \\
    \sigma^\ast(\underline c) &=(c_{\sigma^{-1}(1)}, \ldots,c_{\sigma^{-1}(n)}).
\end{align*}
Moreover, the composition
$$
\catoperad P_{\mathbb{S}}(\underline c; c)
 \times \catoperad P_{\mathbb{S}}(\underline d; c_i)
\xrightarrow{\triangleleft_i}
 \catoperad P_{\mathbb{S}}(\underline c \triangleleft_i \underline{d}; c)
$$
in $\catoperad P_{\mathbb{S}}$
is given by the maps
$$
\catoperad P(\sigma^\ast(\underline c);c) \times \{\sigma\}
\times
\catoperad P(\mu^\ast(\underline d);c_i) \times \{\mu\}
\xrightarrow{\triangleleft_{\sigma(i)}}
\catoperad P((\sigma^\ast(\underline c)\triangleleft_{\sigma(i)}
\mu^\ast(\underline d);c)
\times \{\rho\}
$$
where $\rho \in \mathbb{S}_{n+m-1}$ is the permutation given
by
$$
\begin{tikzcd}
    \{1, \ldots , n+m-1\}
    \ar[d, equal]
    \\
    \{1, \ldots, i-1\}
    \sqcup \{i+m, \ldots, n+m-1\}
    \sqcup \{i, \ldots, i+ m-1\}
    \ar[d, "\id{}\sqcup (a\mapsto a-m+1)\sqcup (b\mapsto b-i+1)"]
    \\
    \{1, \ldots, i-1\}
    \sqcup \{i+1, \ldots, n\}
    \sqcup \{1, \ldots, m\}
    \ar[d, equal]
    \\
    \left(\{1, \ldots, n\} - \{i\} \right)
    \sqcup \{1, \ldots, m\}
    \ar[d, "\sigma \sqcup \mu"]
    \\
    \left(\{1, \ldots, n\} - \{\sigma(i)\} \right)
    \sqcup \{1, \ldots, m\}
    \ar[d, equal]
    \\
    \{1, \ldots, \sigma(i)-1\}
    \sqcup \{\sigma(i)+1, \ldots, n\}
    \sqcup \{1, \ldots, m\}
    \ar[d, "\id{}\sqcup (a\mapsto a+m-1)\sqcup (b\mapsto b+\sigma(i)-1)"]
    \\
    \{1, \ldots, \sigma(i)-1\}
    \sqcup \{\sigma(i)+m, \ldots, n+m-1\}
    \sqcup \{\sigma(i), \ldots, \sigma(i)+ m-1\}
    \ar[d, equal]
    \\
    \{1, \ldots , n+m-1\}
\end{tikzcd}
$$
This defines an endofunctor of the category of
categorical planar operads
that has the canonical structure of a monad whose
    \begin{itemize}
        \itemt unit is given by the inclusion of identities
        into
        all permutations;
        \itemt product is given by the composition
        of permutations. 
    \end{itemize}
\end{definition}

\begin{definition}
A categorical operad $\catoperad P$ is an algebra over the monad $\__{\mathbb S}$
in the category of categorical planar operads.
In other words, this is the data of
categorical planar operad together with isomorphisms
$$
    \sigma^\ast :  \catoperad P(\underline c;c)
    \to \catoperad P(\underline c^{\sigma};c)
$$
for any permutation $\sigma \in \Sigma_n$, where
\begin{align*}
    \underline c &=(c_1, \ldots,c_n);
    \\
    \underline c^\sigma &=(c_{\sigma(1)}, \ldots,c_{\sigma(n)}) = (\sigma^{-1})^\ast(\underline{c});
\end{align*}
so that $(\mu \circ \sigma)^\ast = \sigma^\ast \circ \mu^\ast$ and that satisfy coherence conditions with respect to the operadic composition.
We denote $\Operad_{\mathrm{cat}}$ the category of
categorical operads.
\end{definition}

\begin{remark}
As for strict 2-categories,
categorical operads are actually organised
into a 3-category. It is even
enriched in 2-operads (represented for instance by categorical operads).
We will see later a lax version of this enrichment.
\end{remark}

One has a sequence of adjunctions
$$
\begin{tikzcd}
    \categ{Collections}
    \ar[rr, shift left, "\mathrm{T}_\pl"]
    && \categ{Operad}_{\mathrm{cat}, \pl}
    \ar[rr, shift left, "\__{\mathbb{S}}"] \ar[ll, shift left]
    && \categ{Operad}_{\mathrm{cat}} .
    \ar[ll, shift left]
\end{tikzcd}
$$

\begin{proposition}
    The composite adjunction is monadic. We denote $\mathrm{T} = \mathrm{T}_\pl(-)_{\mathbb S}$
    the resulting monad.
\end{proposition}

\begin{proof}
    For instance, one can use the monadicity theorem and prove
    that the forgetful functor creates coequalisers of pairs of morphisms
    of categorical
    operads $f,g :\catoperad P \to \catoperad P'$
    that are split in the category of categorical collections.
\end{proof}

\begin{definition}
    Let $\categ C$ be a small monoidal context. Then
    let $\catEnd(\categ C)$ be the categorical operad
    whose set of objects is that of $\categ C$
    and so that
    $$
    \catEnd(\categ C)(x_1,\ldots, x_n;x) 
    = \categ C(x_1 \otimes \cdots \otimes x_n , x) .
    $$
    This defines a functor
    $$
    \catEnd(-) : \categ{Context}_\lax \to \Operad_{\mathrm{cat}}.
    $$
\end{definition}

\begin{proposition}
    The functor $\catEnd(-)$ is fully faithful and
    preserves products.
\end{proposition}

\begin{proof}
    Straightforward.
\end{proof}

Actually, the essential image of the functor 
$\catEnd(-)$ is the full subcategory spanned by 
operads $\catoperad P$ so that the morphism
$\catoperad P \to \catoperad{uCom}$ is some kind of
strict Grothendieck opfibration.

\begin{definition}
    Let $\catoperad P$ be a categorical operad. Then, let $\catoperad P^{co}$
    be the categorical operad with the same objects and so that
    $$
    \catoperad P^{co}(c_1, \ldots, c_n;c) = \catoperad P(c_1, \ldots, c_n;c)^\op
    $$
    for any colours $c_1, \ldots, c_n,c$. The compositions, units and actions
    of symmetric groups are defined as in $\catoperad P$. This co construction
    induces an involutive endofunctor
    $$
    -^{co}: \Operad_\cat \to \Operad_\cat.
    $$
\end{definition}


\subsection{Algebras}

Let $\catoperad P$ be a categorical operad and let $\categ C$ be a
small monoidal context.

\begin{definition}
A $\catoperad P$-algebra $A$ in $\categ C$
is the data of a 
morphism of categorical operads
$$
A : \catoperad P \to \catEnd(\categ C) .
$$
\end{definition}

More concretely, such an algebra is the data of an object
$A_c \in \categ C$ for any colour $c \in \Ob(\catoperad P)$ together with functors
\begin{align*}
    \catoperad P (c_1, \cdots c_n ; c)
 &\to \categ C (A_{c_1} \otimes \cdots \otimes A_{c_n} , A_c)
 \\
 p &\mapsto A(p)
\end{align*}
that commute with the units, compositions and actions of the
symmetric groups.

\begin{definition}\label{def :laxmorphis}
Let $A$ and $B$ be two $\catoperad P$-algebras in $\categ C$.
A lax $\catoperad P$-morphism from $A$ to $B$ is the data of morphisms in $\categ C$
$$
f_c \in \categ C(A_c, B_c), \quad c \in \Ob(\catoperad P)
$$
and natural transformations
$$
\begin{tikzcd}
    \catoperad P(c_1, \ldots, c_n;c)
    \ar[r, "B"] \ar[d, "A"']
    & \categ C(B_{c_1}\otimes \cdots \otimes B_{c_n}; B(c))
    \ar[d, "- \triangleleft f^{\otimes n}"]
    \ar[ld, Rightarrow]
    \\
    \categ C(A_{c_1}\otimes \cdots \otimes A_{c_n}; A(c))
    \ar[r, "f \triangleleft -"']
    & \categ C(A_{c_1}\otimes \cdots \otimes A_{c_n}; B(c))
\end{tikzcd}
$$
for any colours $c_1, \ldots, c_n, c$ of $\catoperad P$,
where $f^{\otimes n}$ and $f$ actually stands for
$f_{c_1} \otimes \cdots \otimes f_{c_n}$ (or $\id_{\II}$ if $n=0$)
and $f_c$.
We also require that these natural transformations behave coherently
with respect to the units, compositions and actions of the
symmetric groups of the operads $\catoperad P$ and $\catEnd(\categ C)$
in the sense that the following diagrams commute
$$
\begin{tikzcd}
 B(1_c) \circ f
 \ar[r] \ar[d, equal]
 & f \circ A(1_c)
 \ar[d]
 \\
 f
 \ar[r, equal]
 & f
\end{tikzcd}
\quad
\begin{tikzcd}
    B(p^{\sigma}) \circ f^{\otimes n}
    \ar[r] \ar[d, equal]
    & f \circ A(p^{\sigma})
    \ar[d, equal]
    \\
    B(p) \circ f^{\otimes n} \circ \sigma^\ast
    \ar[r]
    & f \circ A(p) \circ \sigma^\ast .
\end{tikzcd}
$$
$$
\begin{tikzcd}
     B(p) \circ (\id^{\otimes i-1} \otimes B(p') \otimes \id^{\otimes n-i}) \circ f^{n+m-1}
     \ar[r] \ar[dd, equal]
     & B(p) \circ f^{\otimes n} \circ  (\id^{\otimes i-1} \otimes A(p') \otimes \id^{\otimes n-i})
     \ar[d]
     \\
     &
     f \circ A(p) \circ  (\id^{\otimes i-1} \otimes A(p') \otimes \id^{\otimes n-i})
     \ar[d, equal]
     \\
     B(p \triangleleft_i p') \circ f^{\otimes n+m-1}
     \ar[r]
     &
     f \circ
     A(p \triangleleft_i p');
\end{tikzcd}
$$
for any operations $p \in \catoperad P(c_1, \ldots, c_n;c)$
and $p' \in \catoperad P(c'_1, \ldots, c'_m;c_i)$
and any permutation $\sigma \in \mathbb S_n$.
\end{definition}

\begin{remark}\label{remarkbvtensorlax}
    The lax morphisms are actually algebras over a categorical
    operad $\catoperad P \otimes_{BV, lax} [1]$ which a "lax variation"
    of the Boardman--Vogt tensor product of $\catoperad P$ with the
    poset $[1]$.
\end{remark}

\begin{definition}
A strong $\catoperad P$-morphism (resp. strict $\catoperad P$-morphism)
is a lax $\catoperad P$-morphism $f$ so that the structural natural transformations
are invertible (resp. are identities).
\end{definition}

\begin{definition}
Let $f,g: A \to B$ be two lax $\catoperad P$-morphisms.
A $\catoperad P$ 2-morphism $a$ from $f$ to $g$ 
is the data of 2-morphisms in $\categ C$
$$
     a_c : f_c \to g_c
$$
for any colour $c \in \Ob(\catoperad P)$
so that the following diagram commutes
$$
\begin{tikzcd}
     B(p) \circ f^{\otimes n}
     \ar[r] \ar[d]
     & f \circ A(p)
     \ar[d]
     \\
     B(p) \circ g^{\otimes n}
     \ar[r]
     & g \circ A(p)
\end{tikzcd}
$$
for any element $p$ of $\catoperad P(c_1, \ldots, c_n ; c)$.
\end{definition}

\begin{definition}
Let $\tcatalg{\categ C}{\catoperad P}_{\lax}$ be the strict 2-category
whose objects are $\catoperad P$-algebras in $\categ C$, whose morphisms
are lax $\catoperad P$-morphisms, and whose 2-morphisms are
$\catoperad P$-2-morphisms.
The composition of two lax $\catoperad P$-morphisms $f : A \to A'$ and $g : A' \to A''$ is given by the composition of morphisms in $\categ C$
$$
(g \circ f)_c = g_c \circ f_c
     $$
     and the 2-morphism
     $$
      A''(p) \circ (g \circ f)^{\otimes n}
      =  A''(p) \circ g^{\otimes n} \circ f^{\otimes n}
      \to g \circ A'(p) \circ f^{\otimes n}
      \to g \circ f \circ A(p) .
     $$
     The vertical composition and the horizontal composition
     of $\catoperad P$ 2-morphisms are just given by the vertical composition
     and the horizontal composition of 2-morphisms in $\categ C$.
     Besides, one defines similarly
     the strict 2-categories 
    $\tcatalg{\categ C}{\catoperad P}_\strong$,
     and  $\tcatalg{\categ C}{\catoperad P}_\strict$
     replacing mutatis mutandix lax $\catoperad P$-morphisms
     by respectively strong ones and strict ones.
    \end{definition}


\subsection{The monoidal context of algebras}

Let $\categ C$ be a monoidal context and let $\catoperad P$ be a categorical
operad.

\begin{proposition}\label{lemmalaxmonoidalcontextoperad}
    The strict 2-category $\tcatalg{\categ C}{\catoperad P}_{\mathrm{lax}}$ has
    the structure of a monoidal context so that the forgetful 2-functor
    $$
    \tcatalg{\categ C}{\catoperad P}_{\mathrm{lax}} \to
    \categ{C}^{\Ob(\catoperad P)}
    $$
    is a strict context functor.
   \end{proposition}
   
   \begin{proof}
The monoidal unit is given by the
following $\catoperad P$-algebra obtained from the monoidal
unit of $\categ C$
$$
\catoperad P \to \catoperad{uCom} \xrightarrow{\II_{\categ C}}
\catEnd(\categ C).
$$
The tensor product $-\otimes -$ is the bifunctor that sends a pair of
$\catoperad P$-algebra $A, A'$ to the algebra $A \otimes A'$ given by
$$
\catoperad P \to \catoperad P \times \catoperad P \xrightarrow{A \times A'}
\catEnd(\categ C) \times \catEnd(\categ C)
\xrightarrow{\simeq} \catEnd(\categ C \times \categ C)
\xrightarrow{\catEnd(- \otimes -)}
\catEnd(\categ C \times \categ C)
$$
and that sends a pair of lax morphisms $f : A \to B$ and $g: A' \to B'$
to the morphism $f \otimes g$ given by the maps
$$
f_c \otimes g_c : A_c \otimes B_c \to A'_c \otimes B'_c
$$
and the lax structure
$$
   \begin{tikzcd}
        (A \otimes B)(p) \circ (f \otimes g)^{\otimes n}
        \ar[d, equal]
        \\
        \left((A(p) \circ  f^{\otimes n}) \otimes (B(p) \circ  g^{\otimes n})\right) \circ \sigma
        \ar[d]
        \\
       \left((f \circ A'(p)) \otimes (g \circ B'(p))\right) \circ \sigma
       \ar[d, equal]
       \\
       (f \otimes g)  \circ (A\otimes B)(p) ,
   \end{tikzcd}
   $$
   where $\sigma$ is the isomorphism
   $$
   A_{c_1} \otimes B_{c_1} \otimes \cdots \otimes A_{c_n} \otimes B_{c_n} \simeq
   A_{c_1}  \otimes \cdots \otimes A_{c_n} \otimes B_{c_1} \otimes \cdots \otimes B_{c_n} .
   $$
   For any two 2-morphisms $M : f \to f'$ and $N: g \to g'$
   in $\tcatalg{\categ C}{\catoperad P}_{\mathrm{lax}}$,
   one can check that the 2-morphisms
   $(M_c \otimes N_c)_{c \in \Ob(P)}$ in $\categ C$ 
   form a 2-morphism in $\tcatalg{\categ C}{\catoperad P}_{\mathrm{lax}}$.
   
   Now, for any three algebras $A,A',A''$, the associators, unitors
   and commutators in $\categ C$
   $$
   (A_c \otimes A'_c) \otimes A''_c \simeq A_c \otimes (A'_c \otimes A''_c);
    \quad A_c \otimes \II \simeq A_c \simeq \II \otimes A_c;
   \quad A_c \otimes A'_c \simeq A'_c \otimes A_c
   $$
   form natural strict morphisms of $\catoperad P$-algebras.
   These natural transformations satisfy the axioms of a
   monoidal context since their images into $\categ C^{\Ob(\catoperad P)}$ do
   and since the forgeful functor 
   $$
   \sk(\tcatalg{\categ C}{\catoperad P}_\strict) \to \mathrm{sk}(\categ C^{\Ob(\catoperad P)})
   $$
  is faithful.
   \end{proof}

   \begin{corollary}
    The strict 2-categories
    $\tcatalg{\categ C}{\catoperad P}_\strong$ and
    $\tcatalg{\categ C}{\catoperad P}_\strict$
    have canonical structures of monoidal contexts and the 2-functors
    $$
        \tcatalg{\categ C}{\catoperad P}_{\strict}
        \to
         \tcatalg{\categ C}{\catoperad P}_\strong
         \to
          \tcatalg{\categ C}{\catoperad P}_\lax
         \to
         \categ C^{\Ob(\catoperad P)}
    $$
    are strict context functors.
   \end{corollary}
   

\subsection{Naturality of algebras}

\begin{proposition}
The construction that sends a categorical operad $\catoperad P$
and a monoidal context $\categ C$
to the monoidal context $\tcatalg{\categ C}{\catoperad P}_{\lax}$
induces a functor
$$
\tcatalg{-}{-}_{\lax} : \Operad_{\mathrm{cat}}^{op} \times \mathsf{Context}_{\lax}
\to \mathsf{Context}_{\lax}.
$$
that has the canonical structure of a lax monoidal functor.
One obtains similar functors lax monoidal functors
by replacing lax morphisms
by strong ones or strict ones.
Then, one has canonical monoidal natural transformations of
lax monoidal functors
$$
        \tcatalg{-}{-}_{\strict}
        \to
         \tcatalg{-}{-}_\strong
\to
          \tcatalg{-}{-}_\lax
\to
         ((\catoperad P, \categ C) \mapsto \categ C^{\Ob(\catoperad P)}).
    $$
\end{proposition}

\begin{proof}
Given a morphism of operads $f : \catoperad Q \to \catoperad P$
and a lax context functor
$F : \categ C \to \categ D$, one gets a lax context functor 
$$
\tcatalg{\categ C}{\catoperad P}_{\lax}
\to \tcatalg{\categ D}{\catoperad Q}_{\lax}
$$
that sends an object,
    that is a morphism of operads
    $A :\catoperad P \to \catEnd(\categ C)$ to the morphism
    $$
    \catEnd(F) \circ A \circ f .
    $$
The image of lax morphisms and 2-morphisms are
defined accordingly using $f$ and $F$.
Moreover, the structure of a lax context functor
is given by the that of $F$.

Finally, the lax monoidal
structure on the functor $\tcatalg{-}{-}_\lax$
is given by the natural strict context functor
$$
\tcatalg{\categ C}{\catoperad P}_\lax \times \tcatalg{\categ D}{\catoperad Q}_\lax
\to \tcatalg{\categ C \times \categ D}{\catoperad P \times \catoperad D}_\lax
$$
that sends the pair $(A, B)$ of a $\catoperad P$-algebra $A$ and a
$\catoperad Q$-algebra $B$ to the family of objects 
$$
(A,B)_{c,c'} = (A_c, B_{c'}) \in \categ C \times \categ D
$$
indexed by $\Ob(\catoperad P) \times \Ob(\catoperad Q)$ that are
equipped with the structure of a $\catoperad P \times \catoperad Q$-algebra
given by
$$
\begin{tikzcd}
    \catoperad P(c_1, \ldots, c_n;c) \times \catoperad Q(c'_1, \ldots, c'_n;c')
    \ar[d, "A \times B"]
    \\
    \categ C(A_{c_1} \otimes \cdots \otimes A_{c_1}, A_{c})
    \times 
    \categ D(B_{c'_1} \otimes \cdots \otimes B_{c'_1}, B_{c'})
    \ar[d, equal]
    \\
    \categ C \times \categ D
    ((A_{c_1}, B_{c'_1})\otimes \cdots \otimes (A_{c_n}, B_{c'_n}), (A_{c}, B_{c'})).
\end{tikzcd}
$$
\end{proof}

\begin{remark}
Again, by only considering a functor (and not a 3-functor here), we are discarding a lot of higher information.
\end{remark}

\begin{corollary}
    Let $F: \categ C \to \categ D$ be a strong (resp. strict) context
    functor and let $f: \catoperad P \to \catoperad Q$ be a morphism of operads.
    Then, the resulting 2-functor
    $$
\tcatalg{\categ C}{\catoperad Q} \to \tcatalg{\categ D}{\catoperad P}
    $$
    is a strong (resp. strict) context functor.
\end{corollary}

\begin{proposition}\label{prop : pullbackalg}
Let $\catoperad P$ be a categorical operad
and let $F: \categ C \to \categ D$ be a strictly fully faithful
strong context functor. Then, the square
$$
\begin{tikzcd}
     \tcatalg{\categ{C}}{\catoperad P}_{\lax}
     \ar[r] \ar[d]
     & \tcatalg{\categ{D}}{\catoperad P}_{\lax}
     \ar[d]
     \\
 \categ C^{\Ob(\catoperad P)}
 \ar[r]
 & \categ D^{\Ob(\catoperad P)} 
\end{tikzcd}
$$
is a pullback square in the category of strict 2-categories.
In particular, the 2-functor $\tcatalg{\categ{C}}{\catoperad P}_{\lax}
     \to \tcatalg{\categ{D}}{\catoperad P}_{\lax}$
is strictly fully faithful.
\end{proposition}

\begin{proof}
 It amounts to prove that the underlying square of sets given by objects
 is a pullback (which means that a $\catoperad P$-algebra in $\categ C$ is the same
 thing as objects $A = (A_c)_{c \in \Ob(\catoperad P)}$ in $\categ C$ together
 with the structure of a $\catoperad P$-algebra on $F(A)$) and that the 2-functor
 $\tcatalg{\categ{C}}{\catoperad P}_{\lax}
     \to \tcatalg{\categ{D}}{\catoperad P}_{\lax}$ is strictly fully faithful.
\end{proof}

\begin{corollary}
 Let $F: \categ C \to \categ D$ be a strong context functor
 that is also an iso-equivalence.
 Then, for any categorical operad $\catoperad P$, the 2-functor
 $$
 \tcatalg{\categ C}{\catoperad P}_\lax \to \tcatalg{\categ D}{\catoperad P}_\lax
 $$
 is an isoequivalence.
\end{corollary}

\begin{proof}
We already know that this 2-functor is strictly fully faithful.
It suffices then to show that it is strictly essentially surjective.
Let us consider a $\catoperad P$-algebra $B$ in $\categ D$.
Since $F$ is strictly essentially surjective, let us consider objects
$(A_c)_{c \in \Ob(\catoperad P)}$ in $\categ C$ together with isomorphisms
$$
    F(A_c) \simeq B_c ,\  c \in \Ob(\catoperad P).
$$
Using these isomorphisms, one can build a structure of a $\catoperad P$-algebra
$A'$ on the objects $(F(A_c))_{c \in \Ob(\catoperad P)}$ so that these isomorphisms
form an isomorphism of $\catoperad P$-algebras $A'\simeq B$ in $\categ D$.
By Proposition \ref{prop : pullbackalg}, this new $\catoperad P$-algebra $A'$ is the
image of a $\catoperad P$-algebra in $\categ C$.
\end{proof}

\subsection{A long remark about categorical operads}

One has actually a functor
$$
\mathcal A_-(-)_\lax:
\Operad_\cat \times \Operad_\cat^\op \to \Operad_\cat
$$
so that the following diagram commutes
$$
\begin{tikzcd}
    \categ{Context}_\lax \times \Operad_\cat^\op
    \ar[rr, "{\tcatalg{-}{-}_\lax}"]\ar[d, hookrightarrow]
    && \categ{Context}_\lax
    \ar[d, hookrightarrow]
    \\
    \Operad_\cat \times \Operad_\cat^\op 
    \ar[rr, "{\mathcal A_-(-)_\lax}"']
    && \Operad_\cat
\end{tikzcd}
$$

\subsubsection{The lax mapping operad}

\begin{definition}
    Let $\catoperad P, \catoperad Q$ be two categorical operads.
    A $\catoperad P$-algebra in $\catoperad Q$ is the data of a
    morphism of categorical operads
    $$
    A : \catoperad P \to \catoperad Q.
    $$
\end{definition}

\begin{definition}
    Let $\catoperad P, \catoperad Q$ be two categorical operads
    and let $A_1, \ldots, A_n,B$ be $\catoperad P$-algebras in $\catoperad Q$.
    A lax operation from $(A_1, \ldots, A_n)$ to $B$
    is the data of 
    elements
    $$
        m_c \in \catoperad Q (A_1(c), \ldots, A_n(c); B(c)), \quad c \in \Ob(\catoperad P)
    $$
    together with natural transformations $l(c_1, \ldots, c_N;c)$
            $$
            \begin{tikzcd}
                \catoperad P(c_1, \ldots, c_N;c)
                \ar[rr, "B"] \ar[dd, "{(A_i)_{i=1}^n}"'] 
                &{}
                \ar[dd, Rightarrow]
                & \catoperad Q(B(c_1), \ldots, B(c_N);B(c))
                \ar[d, "{ - \triangleleft (m_{c_1}, \ldots, m_{c_N})}"]
                \\
                && \catoperad Q\left((A_i(c_j))_{i=1, j=1}^{n,N} ;B(c)\right)
                \ar[d, leftarrow, "\xi_{n,N}^\ast"]
                \\
                \prod_{i=1}^n \catoperad Q(A_i(c_1), \ldots, A_i(c_N);A_i(c))
                \ar[rr, "{m_c \triangleleft (-,\ldots, -)}"']
                &{}
                & \catoperad Q\left((A_i(c_j))_{j=1, i=1}^{N,n} ;B(c)\right)
        \end{tikzcd}
        $$
        for any colours $c_1, \ldots, c_N,c$ of $\catoperad P$,
        where $\xi_{n,N} \in \mathbb S_{n \times N}$ is the
        permutation
        $$
        (j-1) n + i \mapsto (i-1)N + j, \quad 1 \leq i \leq n, 1 \leq j \leq N.
        $$
        Such natural transformations are required to behave coherently
        with respect to the operadic structure of $\catoperad P$ and $\catoperad Q$ in the sense
        that the following diagrams commute
        $$
\begin{tikzcd}
    B\left(p \triangleleft (p_j)_{j=1}^N\right)
    \triangleleft (m_{c_{k,j}})_{k=1, j=1}^{N_j, N}
    \ar[rr, equal] \ar[dddd]
    && B(p) \triangleleft
    \left(B(p_j) \triangleleft (m_{c_{k,j}})_{k=1}^{N_j}\right)_{j=1}^N
    \ar[d]
    \\
    && B(p) \triangleleft
    \left(\left(m_{c_j}\triangleleft (A_i(p_j))_{i=1}^{n}\right)^{\xi_{n,N_j}}\right)_{j=1}^N
    \ar[d, equal]
    \\
    && \left(\left(B(p) \triangleleft (m_{c_j})_{j=1}^N \right)
    \triangleleft \left(A_i(p_j)\right)_{i=1, j=1}^{n, N}\right)^{(\xi_{n, N_j})_{j=1}^N}
    \ar[d]
    \\
    && \left(\left(m_c \triangleleft (A_i(p))_{i=1}^n \right)^{\xi_{n, N}} \triangleleft
     \left(A_i(p_j)\right)_{i=1, j=1}^{n, N}\right)^{(\xi_{n, N_j})_{j=1}^N}
    \ar[d, equal]
    \\
    \left(m_c \triangleleft \left( A_i(p  \triangleleft (p_j)_{j=1})^N\right)_{i=1}^n\right)^{\xi_{n, \sum_j N_j}}
    \ar[rr, equal]
    &&\left(m_c \triangleleft \left( A_i(p)  \triangleleft (A_i(p_j))_{j=1}^N\right)_{i=1}^n\right)^{\xi_{n, \sum_j N_j}}
\end{tikzcd}
        $$
        $$
\begin{tikzcd}
    B(p) \triangleleft \left(m_{c_{j}}\right)_{j=1}^N
    \ar[r, equal] \ar[d]
    & \left( B(p^\sigma) \triangleleft \left(m_{c_{\sigma(j)}}\right)_{j=1}^N \right)^\mu
    \ar[d]
    \\
    \left(m_c \triangleleft \left(A_i(p)\right)_{i=1}^n \right)^{\xi_{n,N}}
    \ar[r, equal]
    & \left( m_c \triangleleft \left(A_i(p^\sigma)\right)_{i=1}^n  \right)^{\xi_{n,N} \circ \mu}
\end{tikzcd}
\quad 
\begin{tikzcd}
    B(1_c) \triangleleft m_c
    \ar[r, equal] \ar[d]
    & m_c
    \ar[d, equal]
    \\
    m_c \triangleleft A(1_c)
    \ar[r, equal]
    & m_c    
\end{tikzcd}
        $$
for any operations $p \in  \catoperad P(c_1, \ldots, c_N; c)$
and $p_j \in \catoperad P((c_{k,j})_{k=1}^{N_j}; c_j)$
for $j= 1, \ldots, N$ and where $\mu \in \mathbb S_{n \times N}$
is the permutation
$$
(j-1)n + i \mapsto (\sigma(j)-1) +i, \quad 1 \leq j \leq N, 1 \leq i \leq n. 
$$
A morphism of lax operations 
    from $(m_c)_{c \in \Ob(\catoperad P)}$
    to $(m'_c)_{c \in \Ob(\catoperad P)}$
    is the data of a morphism
    $m_c \to m'_c$ in $\catoperad Q(A_1(c), \ldots, A_n(c); B(c))$
    for any colour $c \in \Ob(\catoperad P)$ so that the following
    diagram commutes
    $$
    \begin{tikzcd}
        B(p) \triangleleft (m_{c_1},  \ldots m_{c_N})
        \ar[r] \ar[d]
        & \left( m_c \triangleleft (A_i(p))_{i=1}^n\right)^{\xi_{n,N}}
        \ar[d]
        \\
        B(p) \triangleleft (m'_{c_1},  \ldots m'_{c_N})
        \ar[r]
        & \left( m'_c \triangleleft (A_i(p))_{i=1}^n\right)^{\xi_{n,N}}.
    \end{tikzcd}
        $$
        for any operation $p \in \catoperad P(c_1, \ldots, c_n;c)$.

    Such lax operations and their morphisms
    form a category.
\end{definition}

\begin{definition}
    For any two categorical operads $\catoperad P, \catoperad Q$,
    let $\mathcal A(\catoperad P,  \catoperad Q)_\lax$
    be the categorical operad whose objects are
    $\catoperad P$-algebras in $\catoperad Q$
    and so that for any $\catoperad P$-algebras in $\catoperad Q$
    $A_1, \ldots, A_n, B$,
    $\mathcal A(\catoperad P,  \catoperad Q)_\lax(A_1, \ldots, A_n; B)$
    is the category of lax operations from
    $(A_1, \ldots, A_n)$ to $B$.
    The composition,
    units
    and actions of symmetric groups in $\mathcal A(\catoperad P,  \catoperad Q)_\lax$
    are given by those of the operad $\catoperad Q$.
\end{definition}

\begin{proposition}
    The construction
    $$
    \catoperad Q, \catoperad P \mapsto \mathcal{A}(\catoperad P, \catoperad Q)_\lax
    $$
    induces a functor
    $$
    \Operad_{\mathrm{cat}} \times \Operad_{\mathrm{cat}}^{\op} \to \Operad_{\mathrm{cat}}
    $$
    that has the canonical structure of a lax monoidal structure with respect to 
    the cartesian monoidal structures on these categories. 
\end{proposition}

\begin{proof}
    Given a pair of morphism of categorical operads $f : \catoperad P'
    \to \catoperad P$ and $g: \catoperad Q \to \catoperad Q'$, one obtains a morphism
    of categorical operads
    $$
    (g^\ast, f^\ast): \mathcal{A}(\catoperad P, \catoperad Q)_\lax
    \to \mathcal{A}(\catoperad P', \catoperad Q')_\lax
    $$
    that sends
    \begin{itemize}
        \itemt a colour of $\mathcal{A}(\catoperad P, \catoperad Q)_\lax$
        that is a morphism $A :\catoperad P \to \catoperad Q$ to the morphism
        $g \circ A \circ f : \catoperad P' \to \catoperad Q'$;
        \itemt a lax operation
        $$
        \left(m_c \in \catoperad Q(A_1(c), \ldots, A_n(c); B(c))\right)_{c \in \Ob(\catoperad P)}
        $$
        to the lax operation $g(m_{f(c)})_{c \in \Ob(\catoperad P')}$;
        \itemt a morphism of lax operations to its image through $g$.
    \end{itemize}
    The lax monoidal structure of the functor
    $\mathcal{A}(-, -)_\lax$ is given by natural morphisms
    of operads
    $$
    \mathcal{A}(\catoperad P, \catoperad Q)_\lax \times
    \mathcal{A}(\catoperad P', \catoperad Q')_\lax
    \to \mathcal{A}(\catoperad P \times \catoperad P',
    \catoperad Q \times \catoperad Q')_\lax
    $$
    that sends
    \begin{itemize}
        \itemt a pair of colours $A,A'$ to the morphism
        $$
        A \times A' : \catoperad P \times \catoperad P' \to \catoperad Q \times \catoperad Q';
        $$
        \itemt a pair of lax operations
        $$
        \left(m_c \in \catoperad Q(A_1(c), \ldots, A_n(c); B(c))\right)_{c \in \Ob(\catoperad P)},
        \quad
        \left(m'_{c'} \in \catoperad Q'(A'_1(c'), \ldots, A'_n(c'); B'(c'))\right)_{c' \in \Ob(\catoperad P')}
        $$
        to the lax operation
        $$
        \left( (m_c,m'_{c'}) \in \catoperad Q(A_1(c), \ldots, A_n(c); B(c)) 
        \times \catoperad Q'(A'_1(c'), \ldots, A'_n(c'); B'(c'))\right)_{(c,c') \in \Ob(\catoperad P) \times \Ob(\catoperad P')}.
        $$
    \end{itemize}
\end{proof}

\begin{proposition}
The following diagram commutes
$$
\begin{tikzcd}
    \categ{Context}_\lax \times \Operad_\cat^\op
    \ar[rr, "{\tcatalg{-}{-}_\lax}"]\ar[d, hookrightarrow]
    && \categ{Context}_\lax
    \ar[d, hookrightarrow]
    \\
    \Operad_\cat \times \Operad_\cat^\op 
    \ar[rr, "{\mathcal A_-(-)_\lax}"']
    && \Operad_\cat
\end{tikzcd}
$$
up to a canonical natural isomorphism.
\end{proposition}

\begin{proof}
    This follows from a straightforward checking.
\end{proof}

\subsubsection{Grothendieck strict opfibrations}

\begin{definition}
    Let $f:\catoperad P \to \catoperad Q$ be a morphism of operads. An element $p \in \catoperad P(\underline{c};c)$
    is strictly $f$-cartesian if for any colour $c'$ and any two tuples $\underline{c'}, \underline{c''}$
    of colours of $\catoperad P$, the
    functor from $\catoperad P(\underline{c'}, c, \underline{c''};c)$ to the pullback
    $$
    \catoperad P(\underline{c'}, \underline{c'}, \underline{c''};c')
    \times_{\catoperad Q(f(\underline{c'}, \underline{c'}, \underline{c''});f(c'))}
    \catoperad Q(f(\underline{c'}, c, \underline{c''});f(c'))
    $$
    given by the square diagram
    $$
\begin{tikzcd}
    \catoperad P(\underline{c'}, c, \underline{c''};c)
    \ar[r, "- \triangleleft p"]
    \ar[d,"f"']
    & \catoperad P(\underline{c'}, \underline{c}, \underline{c''};c) 
    \ar[d,"f"]
    \\
    \catoperad Q(f(\underline{c'}, c, \underline{c''});f(c'))
    \ar[r, "- \triangleleft f(p)"]
    & \catoperad Q(f(\underline{c'}, \underline{c}, \underline{c''});f(c'))
\end{tikzcd}
    $$
    is an isomorphism of categories.
\end{definition}

\begin{definition}
    A morphism of operads $f:\catoperad P \to \catoperad Q$ is a strict Grothendieck
    opfibration if for any
    colours $c_1, \ldots, c_n$ of $\catoperad P$ and any operation $q \in \catoperad Q(f(c_1), \ldots, f(c_n);d)$
    there exists a colour $c$ of $\catoperad P$ so that $f(c)=d$ and a strictly $f$-cartesian element
    $p \in \catoperad P(c_1, \ldots, c_n;c)$ so that $f(p) = q$. Such an element $p$ is called a
    cartesian lift of $q$ below $(c_1, \ldots, c_n)$.
\end{definition}

\begin{definition}
    Let $\Operad_{\mathrm{cat}, \mathrm{fib}}$ be the full subcategory
    of $\Operad_{\mathrm{cat}}$ spanned by categorical operads $\catoperad P$ so
    that the morphism $\catoperad P  \to \catoperad{uCom}$ is a strict
    Grothendieck opfibration.
\end{definition}

\begin{definition}
    Let $\catoperad P$ be a categorical operad so that the morphism
    $\catoperad P  \to \catoperad{uCom}$ is a strict
    Grothendieck opfibration. The straightening of $\catoperad P$
    is the small monoidal context $s(\catoperad P)$ defined as follows:
    \begin{itemize}
        \itemt the underlying 2-category of $s(\catoperad P)$
        is the underlying 2-category of $\catoperad P$ in the sense that
        its objects are the colours of $\catoperad P$ ($\Ob(s(\catoperad P)) = \Ob(\catoperad P)$)
        and for any two objects $x,y$, $s(\catoperad P)(x,y) = \catoperad P(x;y)$; the composition
        and the units of $s(\catoperad P)$ are defined by those of $\catoperad P$;
        \itemt for any objects $x_1, \ldots, x_n$, we choose a cartesian lift of the unique
        element of $\catoperad{uCom}$ below $(x_1, \ldots, x_n)$
        that we denote $m(x_1, \ldots, x_n)$ and whose target colour is denoted
        $x_1\otimes  \cdots \otimes x_n$:
        $$
        m(x_1, \ldots, x_n)  \in \catoperad P(x_1, \ldots, x_n, x_1\otimes  \cdots \otimes x_n);
        $$
        in particular, for $n=0$, this gives us a colour of $\catoperad P$ that is the monoidal unit of $s(\catoperad P)$;
        for $n=2$, we obtain the tensor product $x_1 \otimes x_2$;
        \itemt the naturality of the tensor product is given by the composite functor
        $$
        \begin{tikzcd}
            s(\catoperad P)(x,x') \times s(\catoperad P)(y,y')
            \ar[d, equal]
            \\
            \catoperad P(x;x') \times \catoperad P(y;y')
            \ar[d]
            \\
            \catoperad P(x', y'; x' \otimes y') \times (\catoperad P(x;x') \times \catoperad P(y;y'))
            \ar[d, "{- \triangleleft (-,-)}"]
            \\
            \catoperad P(x, y; x' \otimes y')
            \ar[d, "\simeq"]
            \\
            \catoperad P(x \otimes y; x' \otimes y');
        \end{tikzcd}
        $$
        one can check that this makes $- \otimes -$
        a strict 2-functor;
        \itemt the associator is the image of the identity of $(x_1 \otimes x_2) \otimes x_3$
        through the map
        $$
        \begin{tikzcd}
            \catoperad P((x_1 \otimes x_2) \otimes x_3; (x_1 \otimes x_2) \otimes x_3)
        \ar[d, "\simeq"]
        \\
        \catoperad P(x_1 \otimes x_2, x_3; (x_1 \otimes x_2) \otimes x_3)
        \ar[d, "\simeq"]
        \\
        \catoperad P(x_1 , x_2, x_3; (x_1 \otimes x_2) \otimes x_3)
        \ar[d, "\simeq"]
        \\
        \catoperad P(x_1 , x_2 \otimes x_3; (x_1 \otimes x_2) \otimes x_3)
        \ar[d, "\simeq"]
        \\
        \catoperad P(x_1 \otimes (x_2 \otimes x_3); (x_1 \otimes x_2) \otimes x_3);
        \end{tikzcd}
        $$
        the commutator is the image of the identity of $x_1 \otimes x_2$
        through the map
        $$
        \catoperad P(x_1 \otimes x_2; x_1 \otimes x_2) \simeq 
        \catoperad P(x_1 , x_2, x_1 \otimes x_2) \xrightarrow{(1,2)^\ast}
        \catoperad P(x_2 , x_1, x_1 \otimes x_2)
        \simeq \catoperad P(x_2 \otimes x_1; x_1 \otimes x_2);
        $$
        the unitors are the images of the identity of $x$ through the maps 
        \begin{align*}
            &\catoperad P(x;x) \simeq \catoperad P(\II, x;x)  \simeq \catoperad P(\II \otimes x;x) ,
            \\
            &\catoperad P(x;x) \simeq \catoperad P(x, \II;x) \simeq \catoperad P(x \otimes \II;x).
        \end{align*}
    \end{itemize}
\end{definition}

\begin{proposition}
    The image through the functor $\catEnd$ of a small
    monoidal context belongs to the full subcategory $\Operad_{\mathrm{cat}, \mathrm{fib}}$.
    Moreover, the construction
    $$
\catoperad P \in \Operad_{\mathrm{cat}, \mathrm{fib}} \mapsto s(\catoperad P)
    $$
    induces a functor
    $$
s : \Operad_{\mathrm{cat}, \mathrm{fib}} \to \categ{Context}_\lax
    $$
    that is a pseudo-inverse of the functor $\catEnd$.
\end{proposition}

\begin{proof}
    Straightforward.
\end{proof}

\begin{corollary}
    A lax context functor $F :  \categ C \to \categ D$
    is strong if and only if the morphism of operads above $\catoperad{uCom}$
    $$
\catEnd(F) : \catEnd(\categ C)\to \catEnd(\categ D)
    $$
    sends cartesian lifts to cartesian lifts.
\end{corollary}

\begin{proof}
    Straightforward.
\end{proof}

\subsection{Opposite structure}

Let $\categ C$ be a small monoidal context and let $\catoperad P$ be a categorical
operad.

\begin{definition}
    A $\catoperad P$-coalgebra in $\categ C$ is the data of
    a $\catoperad P$-algebra in $\categ C^{\op}$. Then, we define
    $$
    \tcatcog{\categ C}{\catoperad P} = \tcatalg{\categ C^\op}{\catoperad P}^\op.
    $$
\end{definition}

\begin{definition}
    Let $A,B$ be two $\catoperad P$-algebras. They are actually morphisms
    of categorical operads
    $$
    A,B: \catoperad P \to \catEnd(\categ C).
    $$
    They, equivalently may be described as morphism from
    $\catoperad P^{co}$ to $\catEnd(\categ C^{co}) = \catEnd(\categ C)^{co}$.
    Then, an oplax morphism
    from $A$ to $B$ is a morphism in $\tcatalg{\categ C^{co}}{\catoperad P^{co}}_\lax$.
    We define
    $$
    \tcatalg{\categ C}{\catoperad P}_\oplax = \tcatalg{\categ C^{co}}{\catoperad P^{co}}_\lax^{co}
    $$
\end{definition}
    
    Then, one can notice that
    \begin{itemize}
        \itemt the opposite "op" construction swaps algebras and coalgebras;
        \itemt the "co" construction swaps lax morphisms/modules into oplax morphisms/modules.
    \end{itemize}

    One cannot compose a lax $\catoperad P$-morphism with an oplax $\catoperad P$-morphism, but one can rewrite a sequence of a lax $\catoperad P$-morphism
    followed by an oplax $\catoperad P$-morphism into a sequence of an oplax $\catoperad P$-morphism followed by a lax $\catoperad P$-morphism.
    
    \begin{definition}
    Let us consider the following square diagram in $\categ C^{\Ob(\catoperad P)}$
    $$
    \begin{tikzcd}
      A \ar[r,"f"] \ar[d,swap, "f'"]
      &   A' \ar[d,"g"] \ar[ld, Rightarrow, "a"]
      \\
      B \ar[r,swap,"g'"]
      & B'
    \end{tikzcd}
    $$
    where the objects are equipped with structures of $\catoperad P$-algebras, where $f$ and $g'$ are equipped with structures of lax $\catoperad P$-morphisms
    and where $f'$ and $g$ are equipped with structures of oplax $\catoperad P$-morphisms.
    Then, the 2-morphisms
    $$
    a_c : g_c \circ f_c \to g'_c\circ f'_c , \quad c \in \Ob(\catoperad P)
    $$
    form a rewriting if the following diagram commutes
    $$
    \begin{tikzcd}
         B'(p) \circ g^{\otimes n}\circ f^{\otimes n} \ar[d]
         & g \circ A'(p) \circ f^{\otimes n}
         \ar[l] \ar[r] 
         & g \circ f \circ A(p)
         \ar[d]
         \\
         B'(p) \circ g'^{\otimes n}\circ f'^{\otimes n} 
         \ar[r]
         & g' \circ B(p) \circ f'^{\otimes n}
         & g' \circ f' \circ A(p)
         \ar[l]
    \end{tikzcd}
    $$
    for any operation $p$ of $\catoperad P$.
    One can define in a similar way a rewriting from $g' \circ f'$ to $g \circ f$.
    \end{definition}

\subsection{Categorical operads with a fixed set of colours}

Let us restrict our attention to categorical operads with a
fixed set of colours.

\begin{definition}
    For any set of colours $O$, let
    $$
    \Operad_{\cat, O}, \Operad_{\cat, \pl, O}, \categ{Collections}_O
    $$
    be the fibers over $O$ of the $\Ob(-)$ functors
\end{definition}

\begin{lemma}
    Let $O$ be a set.
    The three monadic adjunction
    $$
\begin{tikzcd}
    \categ{Collections}
    \ar[rr, shift left, "\mathrm{T}_\pl"]
    && \categ{Operad}_{\mathrm{cat}, \pl}
    \ar[rr, shift left, "\__{\mathbb{S}}"] \ar[ll, shift left]
    && \categ{Operad}_{\mathrm{cat}} .
    \ar[ll, shift left]
\end{tikzcd}
$$
restrict to three monadic adjunction on the $O$-fibers
$\categ{Collections}_O$, $\categ{Operad}_{\cat, \pl, O}$,
and $\categ{Operad}_{\mathrm{cat}, O}$.
\end{lemma}

\begin{proof}
    Straightforward.
\end{proof}

\begin{proposition}
    Let $O$ be a set.
    The category $\categ{Collections}_O$ is cocomplete
    and the forgetful functors 
    $$
    \categ{Operad}_{\cat, O} \to \categ{Operad}_{\cat, \pl, O}
\to    \categ{Collections}_O
    $$
    create and preserve sifted colimits. Hence the categories
    $\categ{Operad}_{\cat, O}, \categ{Operad}_{\cat, \pl, O}$
    are cocomplete.
\end{proposition}

\begin{proof}
    Straightforward with Appendix \ref{appendixcomonads}.
\end{proof}

\begin{lemma}
    The functor
    $$
\Ob(-) : \Operad_\cat \to \Set
    $$
    is a Grothendieck fibration. The result still
    holds for categorical planar operads.
\end{lemma}

\begin{proof}
    Given a categorical operad $\catoperad P$ and a function
    $\phi: O \to \Ob(\catoperad P)$, the cartesian lifting of
    $\catoperad P$ along $\phi$ is the categorical operad $\phi^\ast(\catoperad P)$
    whose set of colours is $O$ and so that
    $$
    \phi^\ast(\catoperad P)(c_1, \ldots, c_n;c) =
    \catoperad P(\phi(c_1), \ldots, \phi(c_n);\phi(c)) 
    $$
    for any colours $c_1, \ldots, c_n,c \in O$.
\end{proof}

\begin{proposition}
    Let $O$ be a set of objects. The functor
    $$
    \Operad_{\cat, O} \to \Operad_\cat
    $$
    preserves colimits of connected diagrams.
    The result still holds for categorical planar operads.
\end{proposition}

\begin{proof}
    Let us consider a connected diagram 
    $$
D : \categ I \to \Operad_{\cat, O}.
    $$
    whose colimit is denoted $\catoperad Q$,
    and a categorical operad $\catoperad P$.
    Then, the following data are equivalent:
    \begin{itemize}
        \itemt a cocone of $D$ in the category $\Operad_\cat$
        whose target is $\catoperad P$;
        \itemt a function $\phi: O \to \Ob(\catoperad P)$
        and a cocone of $D$ in $\Operad_{\cat, O}$
        whose target is $\phi^\ast(\catoperad P)$ (the cartesian lifting
        of $\catoperad P$ along $\phi$);
        \itemt a function $\phi: O \to \Ob(\catoperad P)$
        and a morphism of categorical operads above $O$ from $\catoperad Q$
        to $\phi^\ast(\catoperad P)$;
        \itemt a morphism of categorical operads 
         from $\catoperad Q$
        to $\catoperad P$.
    \end{itemize}
    Hence, $\catoperad Q$ is the colimit of $D$ in the category
    $\Operad_{\cat}$.
\end{proof}

\subsection{Categorical operads by generators and relations}

\begin{proposition}\label{propcolimoper}
    The category of categorical (planar) operads is complete
    and cocomplete.
\end{proposition}

\begin{proof}
    Since the category of categorical collections is complete
    then that of categorical operads is also complete (see Appendix \ref{appendixcomonads}).

    It remains to show that $\Operad_\cat$ is cocomplete. We will actually
    show how to compute colimits of categorical operads.

On the one hand, coproducts are simple to compute; indeed, for a family
of categorical operads $(\catoperad P_i)_{i \in \set I}$, $\coprod_i \catoperad P_i$
is the categorical operad so that
$$
\Ob(\coprod_i \catoperad P_i) = \coprod_i \Ob(\catoperad P_i)
$$
and for any colours $c_0, c_1, \ldots, c_n$
$$
(\coprod_i \catoperad P_i)(c_1, \ldots, c_n; c_0)
= 
\begin{cases}
    \catoperad P_j(c_1, \ldots, c_n; c_0) \text{ if }\exists j, c_0, c_1, \ldots, c_n \in
    \Ob(\catoperad P_j);
\\
    \emptyset \text{ otherwise.}
\end{cases}
$$

On the other hand, let us consider a pair of morphism of
categorical operads $f,g: \catoperad P \to \catoperad P'$
and let $X$ be its coequaliser in the category of categorical collections.
Let $\categ{coeq}$ be the category with three objects $0,1,2$
and so that 
$$
\hom_{\categ{coeq}}(i,j) = 
\begin{cases}
    \emptyset \text{ if } i >j,
    \\
    \{a,b\} \text{ if } i=0,\  j=1,
    \\
    \ast \text{ otherwise.}
\end{cases}
$$
Let us contemplate the following diagram
that represents a functor $\categ{coeq} \times \categ{coeq} \to
\Operad_\cat$
$$
\begin{tikzcd}
    \mathrm{T} \mathrm{T} \catoperad P
    \ar[r, shift left, "TTf"] \ar[r, shift right, "TTg"']
    \ar[d, shift left] \ar[d, shift right]
    & \mathrm{T} \mathrm{T} \catoperad P'
    \ar[d, shift left] \ar[d, shift right]
    \ar[r]
    & \mathrm{T} \mathrm{T} \catoperad X
    \ar[d, shift left] \ar[d, shift right]
    \\
    \mathrm{T} \catoperad P
    \ar[r, shift left, "Tf"] \ar[r, shift right, "Tg"']
    \ar[d]
    & \mathrm{T} \catoperad P'    
    \ar[r] \ar[d]
    & \mathrm{T} \catoperad X
    \ar[d]
    \\
     \catoperad P
    \ar[r, shift left, "f"] \ar[r, shift right, "g"']
    &  \catoperad P
    \ar[r]
    &  \catoperad Q    
\end{tikzcd}
$$
where $\catoperad Q$ is the coequaliser in $\Operad_{\cat, \Ob(X)}$
(and hence in $\Operad_{\cat}$) of the pair of maps
$\mathrm{T} \mathrm{T} \catoperad X \rightrightarrows \mathrm{T} \catoperad X$.
Since the three columns of this diagram are colimiting
and since the two first rows are also colimiting, then
the last row is colimiting. Hence $\catoperad Q$
is the coequaliser of $f,g$.
\end{proof}

\begin{remark}\label{prop:ploperadsasalgebras}\label{prop:operadsasalgebras}
    One can notice that the category of categorical collections
    is presentable and that the monad 
    $\mathrm{T}$
    on categorical collections
    preserve filtered colimits.
    Hence, using \cite[Corollary 2.47 and Paragraph 2.78]{AdamekRosicky94},
    the category of categorical operads
    is presentable and the forgetful functors
    towards categorical collections preserve
    filtered colimits.
\end{remark}

\begin{corollary}\label{coroperadcolimcontinuousfun}
    Let $\categ D$ be a category. Then a functor
    $$
F : \Operad_{\cat} \to \categ D
    $$
    preserves colimits if an only if the following
    conditions are satisfied
    \begin{itemize}
        \itemt $F$ preserves coproducts;
        \itemt the composite functor 
        $$
        \categ{Collections} \to
        \Operad_{\cat} \xrightarrow{F} \categ D
        $$
        preserves coequalisers;
        \itemt for any set $O$, the composite functor
        $$
        \Operad_{\cat, O} \to
        \Operad_{\cat} \xrightarrow{F} \categ D
        $$
        preserves coequalisers.
    \end{itemize}
\end{corollary}

\begin{proof}
    On the one hand, a functor that preserves colimits clearly
    satisfies these conditions.

    On the other hand, if $F$ satisfies these conditions,
    then it preserves coproducts and coequalisers
    (this is a consequence of the way
    coequalisers are computed in $\Operad_{\cat}$ as described in 
    the proof of Proposition \ref{propcolimoper}).
    Hence it preserves colimits.
\end{proof}

\begin{corollary}
    The functor $\Ob(-)$ from categorical operads 
    to sets preserves colimits.
\end{corollary}

\subsection{Generators of operads and algebras}

Let $\categ C$ be a small monoidal context.

\begin{proposition}\label{propcollectionalg}
    Let $X$ be a categorical collection and let 
    $\catoperad P = \mathrm{T}(X)$.
    \begin{itemize}
        \itemt The $\catoperad P$-algebras in $\categ C$
        are the morphisms of categorical collections
        $$
X \to \catEnd(\categ C),
        $$
        that is the data of objects
        $$
A_c \in \categ C, \quad c \in \Ob(X)
        $$
        and functors
        $$
X(c_1, \ldots, c_n;c) \to \categ C(A_{c_1}\otimes \cdots \otimes A_{c_n}, A_c).
        $$
        \itemt The lax morphisms $f:A \to B$
        are the are the data of morphisms in
        $\categ C$
        $$
        f_c : A_c \to B_c,\quad c \in \Ob(X)
        $$
        together with natural transformations
        $$
        \begin{tikzcd}
            X(c_1, \ldots, c_n;c)
            \ar[r] \ar[d]
            & \categ C(B_{c_1} \otimes \cdots \otimes B_{c_n}, B_c)
            \ar[d] \ar[ld, Rightarrow]
            \\
            \categ C(A_{c_1} \otimes \cdots \otimes A_{c_n}, A_c)
            \ar[r]
            & \categ C(A_{c_1} \otimes \cdots \otimes A_{c_n}, B_c).
        \end{tikzcd}
        $$
        \itemt 2-morphisms $a:f \to g$ are the data of 2-morphisms in $\categ C$
        $$
        a_c : f_c \to g_c, \quad c \in \Ob(X)
        $$
        so that the following diagram commutes
        $$
\begin{tikzcd}
    B(x) \circ f^{\otimes n}
    \ar[r] \ar[d]
    & f \circ  A(x)
    \ar[d]
    \\
    B(x) \circ g^{\otimes n}
    \ar[r]
    & f \circ  A(x)
\end{tikzcd}
        $$
        for any operation $x$ of $X$.
    \end{itemize}
\end{proposition}

\begin{proof}
    Straightforward.
\end{proof}

\begin{lemma}\label{lemmasurjfullyfaithfulstrict}
    Let $f: \catoperad P \to \catoperad Q$ be a morphism
     of categorical operad that is surjective in the sense
     that the underlying function on colours 
     is surjective and for any colours $c_1, \ldots, c_n,c$
     of $\catoperad P$, the functor
     $$
    \catoperad P(c_1, \ldots, c_n;c) \to \catoperad Q(f(c_1), \ldots, f(c_n);f(c))
     $$
     is surjective on objects and morphisms.
     Then the 2-functor
     $$
     \tcatalg{\categ C}{\catoperad Q}_\lax
     \to \tcatalg{\categ C}{\catoperad P}_\lax
     $$
     is injective on objects, morphisms and 2-morphisms.
     The result still holds
     if we replace lax morphisms by oplax morphisms, strong
     morphisms or strict morphisms.
\end{lemma}

\begin{proof}
    Straightforward with the definitions.
\end{proof}

\begin{proposition}
    The functor
    $$
    \Operad_\cat^\op \xrightarrow{\tcatalg{\categ C}{-}_\lax} \categ{Context}_\strict
    $$
    preserves small limits. The result still holds
    if we replace
    lax morphisms by oplax morphisms, strong morphisms
    or strict morphisms.
\end{proposition}

\begin{proof}
    Following Corollary \ref{coroperadcolimcontinuousfun},
    we need to show that
    \begin{itemize}
        \itemt the functor $\tcatalg{\categ C}{-}_\lax$ preserves products;
        \itemt the composite functor
        $$
        \categ{Collections}^\op \to \Operad_{\cat}\xrightarrow{\tcatalg{\categ C}{-}_\lax}
        \categ{Context}_\strict
        $$
        preserves equalisers;
        \itemt for any set of colours $O$, the composite functor
        $$
        \Operad_{\cat, O}^\op \to
        \Operad_\cat^\op \xrightarrow{\tcatalg{\categ C}{-}_\lax} \categ{Context}_\strict
        $$
        preserves equalisers.
    \end{itemize}

    The two first points follow from straightforward checkings (using 
    Proposition \ref{propcollectionalg} for the second point).
    
    Now, let $O$ be a set and let us prove the third point.
    Let us consider a coequaliser
    of categorical operads over the set $O$
    $$
\catoperad P' \rightrightarrows \catoperad P \xrightarrow{\pi} \catoperad Q.
    $$
    Since the coequaliser is computed in categorical collections
    over $O$, then the map $\pi:\catoperad P \to \catoperad Q$
    is surjective.
    We also get a diagram of monoidal contexts and 
    strict context functors
    $$
    \tcatalg{\categ C}{\catoperad Q}_\lax \to
    \tcatalg{\categ C}{\catoperad P}_\lax
    \rightrightarrows
    \tcatalg{\categ C}{\catoperad P}_\lax .
    $$
    Let us denote $L$ the equaliser of the two right
    strict context functors (this is actually the equaliser
    in the category of small strict 2-categories)
    and let us contemplate
    the 2-functors
    $$
    \tcatalg{\categ C}{\catoperad Q}_\lax
    \to L
    \to \tcatalg{\categ C}{\catoperad P}_\lax.
    $$
    They are both injective on objects, morphisms
    and 2-morphisms (Lemma
    \ref{lemmasurjfullyfaithfulstrict}).
    Then, for a $\catoperad P$-algebra $A$, the following assertions
    are equivalent
    \begin{itemize}
        \itemt $A$ is in $\tcatalg{\categ C}{\catoperad Q}_\lax$;
        \itemt the map $A : \catoperad P \to \catEnd(\categ C)$
        factorises (uniquely) through the quotient $\catoperad Q$;
        \itemt both maps from $\catoperad P'$ to $\catEnd(\categ C)$
        $$
        \catoperad P' \rightrightarrows \catoperad P \xrightarrow{A} \catEnd(\categ C)
        $$
        are equal;
        \itemt $A$ is in $L$.
    \end{itemize}
    Then, for any lax $\catoperad P$-morphism $f: A \to B$ between
    algebras $A,B$ that belong to $L$, the following assertions are equivalent:
    \begin{itemize}
        \itemt $f$ is a lax $\catoperad Q$-morphism;
        \itemt the maps
        \begin{align*}
            &B(p) \circ f^{\otimes n} \to f \circ A(p)
            &B(p') \circ f^{\otimes n} \to f \circ A(p')
        \end{align*}
        are equal for any two operations $p,p'$ of $\catoperad P$
        that have the same image in $\catoperad Q$;
        \itemt the maps
        \begin{align*}
            &B(p) \circ f^{\otimes n} \to f \circ A(p)
            &B(p') \circ f^{\otimes n} \to f \circ A(p')
        \end{align*}
        are equal for any two operations $p,p'$ of $\catoperad P$
        that are the two images of the same operation of $\catoperad P'$;
        \itemt $f$ is a morphism of $L$.
    \end{itemize}
    Finally, a $\catoperad P$ 2-morphism $a:f \to g$ between morphisms
    that are in $L$ belongs to $L$ and is actually a $\catoperad Q$ 2-morphism.
    Thus, the 2-functor $\tcatalg{\categ C}{\catoperad Q}_\lax \to L$
    is an isomorphism.
    To conclude, a strict context functor whose underlying 2-functor
    is an isomorphism of strict 2-categories is an isomorphism in
    $\categ{Context}_\strict$.
\end{proof}

\begin{remark}
    Another way to understand the proposition just above is to remember that
    lax $\catoperad P$-morphisms are morphisms over a categorical
    operad $\catoperad P \otimes_{BV,lax} [1]$ (see Remark \ref{remarkbvtensorlax})
    and to notice that the construction
    $$
\catoperad P \mapsto \catoperad P \otimes_{BV,lax} [1]
    $$
    preserves colimits.
\end{remark}

\subsection{Example of monoidal structures in a monoidal context}

Let us consider a small monoidal context $\categ C$. Our goal in this subsection is to describe some types of algebras encoded by categorical operads. 

\subsubsection{Pairing}

\begin{definition}
A $n$-pairing in the monoidal context $\categ C$ is the data of $n+1$ objects $(X_1, \cdots X_n,Y)$ and a morphism
$$
    p :X_1 \otimes \cdots\otimes X_n \to Y .
$$
\end{definition}

A n-pairing will be denoted in this subsection using a corolla with $n$-input $\corollan{}$.

\subsubsection{Pseudo-monoids}

\begin{definition}
 A pseudo monoid in $\categ C$ is an object $A$ together
 with pairings
 \begin{align*}
     m:A \otimes A \to A
     \\
     u:\II \to A
 \end{align*}
 which we can represent as corollas
$m = \corollatwo{}$, $u = \corollazero{}$
and together with 2-isomorphisms
\begin{align*}
    \treeassleft{}
    &\simeq \treeassright{};
    \\
    \treeunitleft{}
    &\simeq \id_A \simeq \treeunitright{};
\end{align*}
called respectively the associator, the left unitor and the right unitor.
so 
that 
\begin{itemize}
    \itemt the associator satisfies the pentagon identity, that is the following diagram commutes
    $$
    \begin{tikzcd}
      \treepentagonna{}
      \ar[r] \ar[d]
      & \treepentagonnb{}
      \ar[rd]
      \\
      \treepentagonnc{}
      \ar[r]
      &\treepentagonnd{}
      \ar[r]
      & \treepentagonne{};
    \end{tikzcd}
    $$
    \itemt the unitors satisfy the triangle identity,that is the following diagram commutes
    $$
    \begin{tikzcd}
      \treetrianglea{}
      \ar[rr] \ar[rd]
      && \treetriangleb{}{}
      \ar[ld]
      \\
      &\corollatwo{}.
    \end{tikzcd}
    $$
\end{itemize}
We denote $\catoperad A_{\infty}$ the categorical planar
operad that encodes pseudo monoids.
\end{definition}

\subsubsection{Pseudo commutative monoids}

Let us denote 
$$
\kappa(X,Y) : X \otimes Y \simeq Y \otimes X
$$
the commutator of the monoidal context $\categ C$.

\begin{definition}
A pseudo commutative monoid is a pseudo-monoid $A$ equipped with a 2-isomorphism
$m_A \simeq m_A \circ \kappa(A,A)$
also written
$$
 \corollatwo{} \simeq \corollatwocomm{}
$$
and called the commutator so that
the following diagrams commute
$$
\begin{tikzcd}
     \treeassleft{}
     \ar[r]\ar[d]
     &\treeassright{}
     \ar[r]
     & \corollacommassa{}
     \ar[r,equal]
     &\corollacommassb{}
     \ar[d]
     \\
     \corollacommassc{}
     \ar[r]
     &\corollacommassd{}
     \ar[rr]
     &&\corollacommasse{}
\end{tikzcd}
\quad
\begin{tikzcd}
     \corollatwo{} \ar[r] \ar[rd, equal]
     & \corollatwocomm{} 
     \ar[d]
     \\
     & \corollatwodoublecomm{}
\end{tikzcd}
\quad
\begin{tikzcd}
     {|}
     \ar[r] \ar[d]
     &\treeunitleft{}
     \ar[ld]
     \\
     \treeunitright{}
\end{tikzcd}
$$
We denote $\catoperad E_{\infty}$ the categorical operad that
encodes pseudo commutative monoids.
\end{definition}

\subsubsection{Modules}

\begin{definition}
Given a pseudo monoid $A \in \categ C$, a lax left $A$-module
is an object $M \in \categ C$ equipped with
\begin{itemize}
    \itemt a pairing $m_M: A \otimes M \to M$ also written $\corollamod$
    \itemt two 2-morphisms
    $m_M \circ (m_A \otimes \id_M)
        \to m_M \circ (\id_A \otimes m_M)$ and 
        $m_M \circ (u_A \otimes \id_M)
        \to \id_M$
        also written
        $$
        \treeassleftmod \to \treeassrightmod ; \quad
        \treeunitmodule \to \id_M ;
        $$
\end{itemize}
so that the following diagrams commute
    $$
    \begin{tikzcd}
      \treepentagonnmoda{}
      \ar[r] \ar[d]
      & \treepentagonnmodb{}
      \ar[rd]
      \\
      \treepentagonnmodc{}
      \ar[r]
      &\treepentagonnmodd{}
      \ar[r]
      & \treepentagonnmode{};
    \end{tikzcd}
    $$
    $$
    \begin{tikzcd}
      \treeunitmoduleasslefta{}
      \ar[rr] \ar[rd]
      && \treeunitmoduleassleftb{}{}
      \ar[ld]
      \\
      &\corollamod{};
    \end{tikzcd}
    \quad
    \begin{tikzcd}
      \treeunitmoduleassrighta{}
      \ar[rr] \ar[rd]
      && \treeunitmoduleassrightb{}
      \ar[ld]
      \\
      &\corollamod{}.
    \end{tikzcd}
    $$
\end{definition}

Pairs of a pseudo-monoid $A$ and a lax left module $M$ are
algebras over a categorical planar operad $\catoperad{LM}_{\lax}$
with two colours.

\begin{definition}
One can define similarly
\begin{itemize}
    \itemt an oplax left module with a pairing $m_M : A \otimes M \to M$ and 2-morphisms $m_M \circ (\id_A \otimes m_M) \to m_M \circ (m_A \otimes \id_M)$ and 
    $\id_M \to m_M \circ (u_A \otimes \id_M)$;
    \itemt a lax right module
    with a pairing $m_M : M \otimes A \to M$ and 2-morphisms $m_M \circ (\id_M \otimes m_A) \to m_M \circ (m_M \otimes \id_A)$ and 
    $m_M \circ (id_M \otimes u_A) \to \id_M$;
    \itemt an oplax right module with a pairing $m_M : M \otimes A \to M$ and 2-morphisms $m_M \circ (m_M \otimes \id_A) \to m_M \circ (\id_M \otimes m_A)$ and 
    $\id_M \to m_M \circ (id_M \otimes u_A)$;
\end{itemize}
where the 2-morphisms are required to satisfy mutatis mutandis the same conditions as those in the definition of a lax left module.
\end{definition}

\begin{remark}
    One can notice that 
    the transposition "tr" construction swaps left modules and right modules.
\end{remark}

\begin{definition}
    A lax left (resp. right) module $M$ over a pseudo monoid $A$
    is called strong if the structural 2-morphisms are invertible.
    Pairs of a pseudo-monoid $A$ and a strong left (resp. right)
    module $M$ are
algebras over a categorical planar operad $\catoperad{LM}_{\strong}$
(resp. $\catoperad{RM}_{\strong}$)
with two colours.
\end{definition}

\subsubsection{Mac Lane's coherence for operads}

\begin{proposition}\cite{MacLane63}
    \begin{itemize}
        \itemt The categorical planar operad $\catoperad A_{\infty}$ has one colour
        and $\catoperad A_{\infty}(n)$
        is the groupoid equivalent to the point category
        whose objects are (isomorphism classes of) planar trees
        with only arity 2 and arity 0 nodes; the composition is computed
        by grafitng trees.
        \itemt The categorical operad $\catoperad E_{\infty}$ has one colour
        and $\catoperad E_{\infty}(n)$
        is the groupoid equivalent to the point category
        whose objects are pairs $(t,\sigma)$ of a
        (isomorphism class of) planar trees
        with only arity 2 and arity 0 nodes and a permutation 
        $\sigma \in \mathbb S_n$; the composition is computed
        by grafitng trees.
    \end{itemize}
\end{proposition}

\begin{proof}
    This follows from the same arguments as those used to
    prove MacLane's coherence theorem \cite{MacLane63}    
\end{proof}

 \subsection{Doctrinal adjunction}

Let $\categ C$ be a small monoidal context and let $\catoperad P$
be a categorical operad. Such an operad induces a doctrine (that is a 2-monad)
on $\categ C$ (see \cite{Kelly74}) whose definition of (op)lax morphisms matches with
that of Definition \ref{def :laxmorphis}.
Along these lines of thought,  Kelly's results on doctrinal adjunctions apply in the framework of operads.

Remember that, using the same notation as in Definition \ref{def :laxmorphis},
    an oplax $\catoperad P$-morphism from $A$ to $B$ is the data of morphisms in $\categ C$
    $$
    f_c : A_c \to B_c
    $$
    for any colour $c \in \Ob(\catoperad P)$, and 2-morphisms
    $$
        f \circ A(p) \to B(p) \circ f^{\otimes n}
    $$
    for any element $p \in \catoperad P(c_1, \cdots, c_n ;c)$ that satisfy some conditions.

\begin{theorem}\cite{Kelly74}
Let us consider two $\catoperad P$-algebras $A, B$ in the monoidal context $\categ C$
and, for any colour $c \in \Ob(\catoperad P)$, an adjunction
$$
\begin{tikzcd}
     A_c \ar[rr, shift left, "l_c"]
     && B_c \ar[ll, shift left, "r_c"]
\end{tikzcd}
$$
in $\categ C$ whose unit and counit are denoted respectively $\eta_c$ and $\epsilon_c$. Then,
\begin{enumerate}
    \item the set of structures of an oplax $\catoperad P$-morphism
    on $l$ and the set of structures
    of a lax $\catoperad P$-morphism on $r$ are canonically related by a bijection that we call Kelly's bijection;
    \item given a structure of an oplax $\catoperad P$-morphism
    on $l$ and a structure of a lax $\catoperad P$-morphism
    on $r$, they are related to each other through Kelly's bijection if and only if the
    2-morphisms $\id \circ \id \to r_c \circ l_c$ and $l_c \circ r_c \to \id \circ \id$ are rewritings;
    \item if $l$ and $r$ are equipped with structures of lax $\catoperad P$-morphisms then the 2-morphisms $\eta_c$ and $\epsilon_c$ are $\catoperad P$ 2-morphisms if and only if $l$
    is a strong $\catoperad P$-morphism (hence and oplax $\catoperad P$-morphism) and its oplax structure is related to the lax structure on $r$ through Kelly's bijection;
    \item if $l$ and $r$ are equipped with structures of oplax $\catoperad P$-morphisms then the 2-morphisms $\eta_c$ and $\epsilon_c$ are $\catoperad P$ 2-morphisms if and only if $r$
    is a strong $\catoperad P$-morphism (hence a lax $\catoperad P$-morphism) and its lax structure is related to the oplax structure on $l$ through Kelly's bijection.
\end{enumerate}
\end{theorem}

\begin{proof}
Let us first prove (1). Given the structure of an oplax $\catoperad P$ morphism on $l$, the related structure of a lax $\catoperad P$-morphism on $r$ is given by morphisms of the form
$$
 A(p) \circ r^{\otimes n} \to r \circ l \circ  A(p) \circ r^{\otimes n}
 \to r\circ  B(p) \circ l^{\otimes n} \circ r^{\otimes n}
 \to r \circ B(p) .
$$
Conversely, given the structure of a lax $\catoperad P$ morphism on $r$, the related structure of an oplax $\catoperad P$-morphism on $l$ is given by
morphisms of the form
$$
l \circ  A(p) \to 
l \circ  A(p) \circ r^{\otimes n} \circ l^{\otimes n}
 \to l \circ r \circ  B(p) \circ l^{\otimes n} 
 \to B(p) \circ l^{\otimes n}  .
$$
A straightforward check shows that these formulas do define respectively a lax and an oplax structure and that they are inverse to each other. This defines Kelly's bijection.

Now, let us prove (2). The fact that the 2-morphisms $\id \circ \id \to r_c \circ l_c$ and $l_c \circ r_c \to \id \circ \id$ are rewritings means that the following squares are commutative
$$
\begin{tikzcd}
     l \circ A(p) \circ r^{\otimes n}
     \ar[r] \ar[d]
     & l \circ r \circ B(p)
     \ar[d]
     \\
     B(p)\circ (l \circ
     r)^{\otimes n}
     \ar[r]
     & B(p)
\end{tikzcd}
\quad
\begin{tikzcd}
     A(p)
     \ar[r] \ar[d]
     & r \circ l \circ A(p)
     \ar[d]
     \\
     A(p)\circ (r \circ l)^{\otimes n}
     \ar[r]
     & r \circ B(p) \circ l^{\otimes n}
\end{tikzcd}
$$
for any operation $p$ of the operad $\catoperad P$ (actually, if one square is commutative, then the other one is also commutative). A straightforward diagram chasing shows that the commutation of these diagrams is equivalent to the fact that the oplax structure on $l$ and the lax structure on $r$ are related through Kelly's bijection.

Then, let us prove (3). In that context, $l$ is equipped with the structure of a lax $\catoperad P$-morphism and with the structure of an oplax $\catoperad P$-morphism. Then, using the right commutative square just above, it is straightforward to check that the fact that the natural transformations $\id \to r_c l_c$ form a $\catoperad P$ 2-morphism is equivalent to the fact that the composite map
$$
l \circ A(p) \to A(p) \circ l^{\otimes n} \to  
l \circ A(p)
$$
is the identity of $l \circ A(p)$ for any $p$. Similarly, using the left commutative square just above,
it is straightforward to check that the fact that the natural transformations $l_cr_c \to \id$ form a $\catoperad P$ 2-morphism is equivalent to the fact that the composite map
$$
A(p) \circ l^{\otimes n} \to l \circ A(p) 
\to A(p) \circ l^{\otimes n}
$$
is the identity of $A(p) \circ l^{\otimes n}$ for any $p$.

Finally, (4) may be proven using the same arguments as (3).
\end{proof}

\begin{proposition}
A lax $\catoperad P$ morphism is an isomorphism (resp. an equivalence) in the 2-category of $\catoperad P$-algebras, lax $\catoperad P$-morphisms and $\catoperad P$ 2-morphisms
if and only if it is a strong $\catoperad P$-morphism and the underlying morphism in $\categ C^{\Ob(\catoperad P)}$ is an isomorphism (resp. an equivalence).
We have the same result when considering oplax morphisms instead of lax morphisms.
\end{proposition}

\begin{proof}
Let $f : A \to B$ be a lax $\catoperad P$-morphism.

Let us suppose that $f$ is an equivalence in the 2-category
of $\catoperad P$-algebras and lax morphisms, with right adjoint $g$.
It is clear that the underlying morphism of $f$ in
$\categ C^{\Ob(\catoperad P)}$ is an equivalence (with pseudo-inverse the
underlying morphism of $g$). Moreover,
by doctrinal adjunction, $f$ is a strong $\catoperad P$-morphism.
 
 Conversely, let $f :A \to B$ be a strong $\catoperad P$-morphism
 whose underlying morphism in $\categ C^{\Ob(\catoperad P)}$
 is an equivalence, with right adjoint $g : B \to A$.
 Then, again by doctrinal adjunction, $g$ inherits the
 structure of a lax $\catoperad P$-morphism so that the
 adjunction
 in $\categ C^{\Ob(\catoperad P)}$ relating $f$ and $g$
 lifts to an adjunction in the 2-category of algebras.
 Since the unit and the counit of this adjunction are
 2-isomorphisms, this is an adjoint equivalence.
\end{proof}

\subsection{Examples}

    In the small cartesian monoidal context of
    $\mathcal V$-small strict 2-categories,
    pseudo commutative monoids
    are precisely monoidal contexts and lax $\catoperad E_\infty$-morphisms are lax context functors.

    In the small cartesian monoidal context $\categ{Cats}_{\mathcal{V}-\mathrm{small}}$
     of $\mathcal V$-small categories, one can check that:
\begin{itemize}
    \itemt pseudo (commutative) monoids are (symmetric) monoidal categories;
    \itemt (op)lax morphisms of pseudo monoids are (op)lax monoidal functors;
    \itemt (op)lax morphisms of pseudo commutative monoids are (op)lax symmetric monoidal functors;
    \itemt $\catoperad A_\infty$ 2-morphisms are monoidal natural transformations;
    \itemt $\catoperad E_\infty$ 2-morphisms are symmetric monoidal natural transformations;
    \itemt lax left modules are categories tensored over a monoidal category;
    \itemt lax morphisms of lax left modules over a lax monoidal functor are functors equipped with a strength;
    \itemt a cotensorisation of a category $\categ D$ by
    a monoidal category $\categ C$ is the structure
    of a oplax right $\categ C^{\op}$-module on $\categ D$,
    or equivalently, the structure of a lax
    right $\categ C$-module on $\categ D^{\op}$.
\end{itemize}

In the small cartesian monoidal context $\categ{Functors}_{\mathcal V - \mathrm{small}}$
(that is the full sub 2-category of $\categ{Functors}$ spanned by functors
between $\mathcal{V}$-small categories, that is stable through finite products):
\begin{itemize}
    \itemt the pseudo (commutative) monoids are given by pairs of (symmetric) monoidal categories and strict monoidal functors.
    \itemt the (op)lax monoidal functors
    from $F_1 : \categ C_1 \to \categ D_1$ to $F_2 : \categ C_2 \to \categ D_2$ are pairs of (op)lax
    monoidal functors $S : \categ C_1
    \to \categ C_2$ and $T : \categ D_1
    \to \categ D_2$ so that $F_2\circ S = T \circ F_1$ as (op)lax monoidal functors;
    \itemt the lax left modules over a pseudo monoid $\categ A_1 \to \categ A_2$ are
    the data of a category $\categ M_1$ tensored
    over $\categ A_1$, a category $\categ M_2$ tensored over $\categ A_2$ and a functor
    $\categ M_1 \to \categ M_2$ that commutes with all the tensoring structures.
\end{itemize}


\section{Comonads and monoidal structures}

In this section, we describe the monoidal context of comonads and show how it is related to the monoidal context $\Functors$.

From now on, a set or a small set is a $\mathcal U$-small set 
and a large set is a $\mathcal U$-large set. Moreover, otherwise stated,
a category is a $\mathcal U$-category. Then, we will denote
\begin{itemize}
    \itemt $\Cats$ will no more denote the cartesian monoidal
    context of $\mathcal W$-small categories but that of $\mathcal{U}$-categories;
    \itemt $\Functors$ will denote the cartesian monoidal
    context
    $\TwoFun(\Mor, \categ{Cats}_{\mathcal U})$ instead of \\$\TwoFun(\Mor, \categ{Cats}_{\mathcal{W}-\mathrm{small}})$. 
\end{itemize}

\subsection{The monoidal context of comonads}

\begin{definition}
Let $\Delta_{act}$ be the sub category of $\Delta$ made up of active morphisms. More precisely, its objects are the posets
$$
    [n] \coloneqq (0 < 1 < \cdots < n)
$$
for $n \in \mathbb N$ and its morphisms from $[n]$ to $[m]$ are the morphisms of posets (that is functors) $f$ so that
$f(0)=0$ and $f(n)=m$. This is a strict monoidal category with tensor product
$$
    [n] \otimes [m] = [n+m].
$$
Moreover, let $B(\Delta_{act})$ be the delooping of the monoidal category $\Delta_{act}$, that is the strict 2-category with one object $\ast$ and so that
$$
    B(\Delta_{act}) (\ast,\ast) = \Delta_{act}.
$$
\end{definition}

\begin{definition}
A category with comonad is a pair $(\categ C, Q)$ of a category equipped with a comonad. Equivalently, this is a 2-functor from $B(\Delta_{act})$ to $\Cats$.
\end{definition}

In particular, categories with comonads are algebras over a categorical operad (this is just the 2-category $B(\Delta_{act})$ seen as an operad) in the monoidal context $\Cats$. Thus, one can then define (op)lax morphisms between categories with comonads.

\begin{definition}
A (op)lax comonad functor between two categories with comonads $(\categ C, Q)$ and $(\categ D, R)$ is a (op)lax morphism of $B(\Delta_{act})$-algebras. For instance an
oplax comonad functor is the data of a functor
$F: \categ C \to \categ D$ and a natural transformation
\[
	F \circ Q \xrightarrow{A} R \circ F
\]
so that the
the following diagrams commute
\[
\begin{tikzcd}
 	FQ \ar[rr] \ar[d]
	&& RF
	\ar[d]
	\\
	FQQ
	\ar[r]
	& RFQ
	\ar[r]
	& FRR
\end{tikzcd}
\quad
\begin{tikzcd}
 	FQ \ar[r] \ar[d]
	& RF
	\ar[d]
	\\
	F\id
	\ar[r,equal]
	& \id F .
\end{tikzcd}
\]
\end{definition}

\begin{definition}
 Let $\categ{Comonads}$ be the strict 2-category made up of $B(\Delta_{act})$-algebras, oplax $B(\Delta_{act})$-morphisms and $B(\Delta_{act})$ 2-morphisms. More precisely, its
\begin{itemize}
 \itemt object are categories with comonads $(\categ C, Q)$;
 \itemt morphisms are oplax comonad functors;
\itemt 2-morphisms between morphisms $(F,A)$ and $(F',A')$ from $(\categ C, Q)$ to $(\categ D, R)$ are natural transformations $F \to F'$
 so that the following diagram commutes
 \[
\begin{tikzcd}
 	FQ
	\ar[r] \ar[d]
	& RF
	\ar[d]
	\\
	F'Q
	\ar[r]
	& RF' .
\end{tikzcd}
 \]
\end{itemize}
\end{definition}

\begin{proposition}
 The 2-category $\categ{Comonads}$ has strict finite products and hence is a cartesian monoidal context.
\end{proposition}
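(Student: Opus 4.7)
The plan is to exhibit the obvious candidates for the strict terminal object and for strict binary products, and then check the universal property by using the fact that the underlying 2-category $\categ{Cats}$ has strict finite products and that the extra comonad data decomposes componentwise.

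For the terminal object I would take the pair $(\ast, \id_\ast)$ consisting of the terminal category equipped with the identity comonad. For any $(\categ E, S) \in \categ{Comonads}$ there is exactly one functor $\categ E \to \ast$, and the required natural transformation from $\id_\ast \circ (-) $ to $(-) \circ S$ is forced; similarly the category of oplax comonad functors from $(\categ E, S)$ to $(\ast, \id_\ast)$ and their 2-morphisms is the terminal category.

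For the binary product of $(\categ C, Q)$ and $(\categ D, R)$ I would take $(\categ C \times \categ D,\ Q \times R)$, where $Q \times R$ denotes the comonad on $\categ C \times \categ D$ defined by $(Q \times R)(X, Y) = (QX, RY)$ with componentwise comultiplication and counit (this is a well-defined comonad because $\Cats$ has strict finite products and the 2-functor $B(\Delta_{act}) \to \Cats$ encoding a comonad is evaluated componentwise). The two strict comonad functors to the factors are given by the projections $\pi_{\categ C}$ and $\pi_{\categ D}$ with identity structural natural transformations. To check the universal property, I would unpack the data of an oplax comonad functor $(F, A) : (\categ E, S) \to (\categ C \times \categ D, Q \times R)$: the functor $F$ corresponds uniquely to a pair $(F_1, F_2)$ of functors into $\categ C$ and $\categ D$, and a natural transformation $A : FS \to (Q \times R)F$ corresponds uniquely to a pair of natural transformations $A_1 : F_1 S \to Q F_1$ and $A_2 : F_2 S \to R F_2$. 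The coassociativity and counitality axioms for $A$ live in the product category and hence hold if and only if they hold for $A_1$ and $A_2$ separately. The analogous statement for 2-morphisms follows from the same componentwise decomposition applied to natural transformations between functors into $\categ C \times \categ D$.

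Putting these together yields a canonical functor
\[
\categ{Comonads}\bigl((\categ E, S),\, (\categ C \times \categ D, Q \times R)\bigr) \longrightarrow \categ{Comonads}\bigl((\categ E, S), (\categ C, Q)\bigr) \times \categ{Comonads}\bigl((\categ E, S), (\categ D, R)\bigr)
\]
whose inverse is obtained by pairing the data in the evident way; this is an isomorphism of categories by the componentwise decomposition above. I do not expect a genuine obstacle here: everything is a matter of unpacking definitions, and the only mildly delicate point is checking that the axioms of an oplax comonad functor really do split componentwise, which is immediate once one notes that the comonad structure on $Q \times R$, the associator of $\Cats \times \Cats$, and horizontal composition of natural transformations are all defined strictly componentwise.
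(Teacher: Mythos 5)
Your proof is correct and is exactly the componentwise argument the paper intends (its own proof is simply ``Straightforward''): terminal object $(\ast,\id_\ast)$, product $(\categ C\times\categ D, Q\times R)$, and the observation that the oplax structure map and its axioms split into components. The only cosmetic slip is the stated direction of the structural transformation for the terminal object (for an oplax comonad functor it should go $F\circ S \to \id_\ast\circ F$), which is harmless since all transformations into the terminal category are forced.
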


\begin{proof}
 Straightforward.
\end{proof}

\begin{definition}
 Given a category $\categ C$ and a comonad $Q$ on it, let $\Cog_{\categ C}(Q)$
 be the category of $Q$-coalgebras. Equivalently, this is the mapping category
 $$
\Cog_{\categ C}(M) = \Comonads\left(\ast_{\categ{Monads}},  (\categ C, M)\right) .
 $$
 This defines a 2-functor from $\Comonads$ to $\Cats$.
\end{definition}

\begin{proposition}
 The 2- functor $\Cog$ preserves strict finite products and hence is a context functor.
\end{proposition}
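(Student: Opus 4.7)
The plan is to invoke the proposition from the subsection on cartesian monoidal structures stating that any 2-functor preserving strict finite products between cartesian monoidal contexts canonically acquires the structure of a context functor. So the task reduces to proving two things: (i) that $\Cog$ sends the terminal object of $\Comonads$ to that of $\Cats$, and (ii) that it strictly preserves binary products.

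For (i), the terminal object of $\Comonads$ is $(\ast, \id_\ast)$, the terminal category endowed with the identity comonad; this can be read off directly, since the forgetful 2-functor to $\Cats$ creates the terminal object and $\id_\ast$ is the unique endofunctor of $\ast$. A coalgebra over $\id_\ast$ on $\ast$ is an object of $\ast$ equipped with the necessarily trivial coaction (whose axioms hold vacuously), so $\Cog_\ast(\id_\ast) = \ast$ strictly.

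For (ii), unravelling the construction of products in $\Comonads$, the product of $(\categ{C}, Q)$ and $(\categ{D}, R)$ is $(\categ{C} \times \categ{D}, Q \times R)$ with $Q \times R$ acting componentwise. A coaction $(X, Y) \to (QX, RY)$ of $Q \times R$ decomposes into a pair of a coaction $X \to QX$ and a coaction $Y \to RY$, and the counit and coassociativity diagrams split into the analogous diagrams on each coordinate; an entirely parallel check applies to morphisms of coalgebras. This yields a strict equality of categories
\[
\Cog_{\categ{C} \times \categ{D}}(Q \times R) = \Cog_{\categ{C}}(Q) \times \Cog_{\categ{D}}(R) .
\]
Alternatively, one may observe that, as noted in the definition, $\Cog$ is the representable 2-functor $\Comonads((\ast, \id_\ast), -)$ corepresented by the terminal object; representables automatically preserve limits, which gives a conceptual reason why the equalities above hold strictly rather than merely up to isomorphism.

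I do not anticipate any obstacle in this verification: both checks are purely formal consequences of the componentwise nature of products in $\Comonads$ and $\Cats$, and the genuine content was already absorbed in the previous proposition that produces the strict finite products on $\Comonads$.
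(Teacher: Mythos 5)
Your argument is correct. The paper's own proof is a one-line appeal to the description $\Cog_{\categ C}(Q) = \Comonads\left(\ast, (\categ C, Q)\right)$: the corepresenting object (the terminal object of $\Comonads$, i.e.\ the point category with its identity comonad) is canonically a cocommutative comonoid in the cartesian monoidal context $\Comonads$, and mapping categories out of a fixed object preserve strict products by the very definition of strict products, so $\Cog$ preserves strict finite products. This is exactly the observation you relegate to your ``alternatively'' remark, so that part of your proposal coincides with the paper. Your primary route -- the explicit componentwise identification $\Cog_{\categ{C} \times \categ{D}}(Q \times R) = \Cog_{\categ{C}}(Q) \times \Cog_{\categ{D}}(R)$ together with $\Cog_\ast(\id_\ast) = \ast$ -- is a more elementary verification that trades the conceptual one-liner for a concrete check of how coactions, counit and coassociativity axioms, and coalgebra morphisms split over the two factors; it has the advantage of making the strictness of the equality visibly true, whereas the representability argument gets strictness for free from the definition of strict product. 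Either way the reduction to the earlier proposition (a strict-finite-product-preserving 2-functor between cartesian monoidal contexts is canonically a context functor) is the right final step and matches the paper.
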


\begin{proof}
 This follows from the fact that $\ast_{\categ{Monads}}$ is
 a cocommutative coalgebra.
\end{proof}

\begin{proposition}
 The forgetful 2-functor $\Comonads \to \Cats$ preserves strict finite products and is hence a context functor.
\end{proposition}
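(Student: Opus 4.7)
The plan is to apply the criterion established earlier in the section: a $2$-functor between cartesian monoidal contexts that preserves strict finite products acquires a canonical structure of context functor. So it suffices to verify that the forgetful $2$-functor $U : \Comonads \to \Cats$ sends the strict finite products of $\Comonads$ (whose existence was established in the preceding proposition) to strict finite products in $\Cats$.

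First, I would identify the strict final object of $\Comonads$. The terminal category $\ast$ equipped with the identity comonad $\id_{\ast}$ satisfies the required universal property: for any category with comonad $(\categ{C}, Q)$, there is a unique oplax comonad functor $(\categ{C}, Q) \to (\ast, \id_{\ast})$, and the category of such is a point, as the only candidate natural transformation $F \circ Q \to \id \circ F$ is the identity. The image of $(\ast, \id_\ast)$ under $U$ is $\ast$, which is itself the strict final object of $\Cats$, so the canonical morphism $U(\ast_{\Comonads}) \to \ast_{\Cats}$ is an isomorphism (in fact an identity).

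Next, for two objects $(\categ{C}, Q)$ and $(\categ{D}, R)$ of $\Comonads$, their strict product is $(\categ{C} \times \categ{D}, Q \times R)$, where $Q \times R$ is the comonad on $\categ{C} \times \categ{D}$ defined pointwise by $(Q \times R)(X,Y) = (Q(X), R(Y))$, with comultiplication and counit induced from those of $Q$ and $R$. The projection oplax comonad functors are the ordinary category projections equipped with identity structural natural transformations. The universal property unpacks to a straightforward pointwise check: an oplax comonad functor $(\categ{E}, S) \to (\categ{C} \times \categ{D}, Q \times R)$ is exactly a pair of oplax comonad functors, and similarly for $2$-morphisms. Applying $U$ then sends this product to $\categ{C} \times \categ{D}$ equipped with the usual projections, which is the strict product in $\Cats$, so again the canonical comparison $U(\categ{C}, Q) \times U(\categ{D}, R) \to U((\categ{C}, Q) \times (\categ{D}, R))$ is an isomorphism.

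There is no real obstacle here: the proof is essentially bookkeeping, since the products in $\Comonads$ are built on the products of the underlying categories. With both verifications in hand, the earlier proposition converting product-preserving $2$-functors between cartesian monoidal contexts into context functors gives the canonical context-functor structure on $U$, completing the proof.
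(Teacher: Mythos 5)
Your proof is correct and fills in exactly the bookkeeping the paper leaves implicit — its own proof is just ``Straightforward.'' Identifying the terminal object as $(\ast,\id_\ast)$ and the binary product as $(\categ C\times\categ D, Q\times R)$ with componentwise oplax structures, then invoking the earlier proposition that product-preserving 2-functors between cartesian monoidal contexts are canonically context functors, is precisely the intended argument.
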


\begin{proof}
 Straigthforward.
\end{proof}

\subsection{Monoidal structures in the monoidal context of comonads}

Our goal in this subsection is to describe pairings, pseudo-monoids and their modules in the monoidal context of comonads.

\subsubsection{Pairings}

A 2-pairing in the monoidal context of comonads from the pair $(\categ C, Q), (\categ D,O)$ to $(\categ E, R)$ consists in a bifunctor
\begin{align*}
    \categ C \times \categ D & \to \categ E
    \\
    (X , Y) &\mapsto X \otimes Y;
\end{align*}
together with a 
natural transformation
 \[
 	Q(X) \otimes O(Y)\to R(X \otimes Y)
 \]
 so that the following diagrams commute
 \[
\begin{tikzcd}[column sep=small]
 	Q(X) \otimes O(Y)
	\ar[rr] \ar[d]
	&& R(X \otimes Y)
	\ar[d]
	\\
	QQ(X) \otimes OO(Y)
	\ar[r]
	& R(Q(X) \otimes O(Y))
	\ar[r]
	& RR(X \otimes Y)
\end{tikzcd}
\quad
\begin{tikzcd}[column sep=small]
 	Q(X) \otimes O(Y)
	\ar[r] \ar[d]
	& R(X \otimes Y) \ar[d]
	\\
	X \otimes Y \ar[r,equal]
	& X \otimes Y .
\end{tikzcd}
 \]

\subsubsection{Hopf comonads}

A pseudo-monoid in the monoidal context of comonads consists in a monoidal category $\categ C$ together with a Hopf comonad; this notion is dual to that of a Hopf monad (see \cite{Moerdijk02}).

\begin{definition}
 A Hopf comonad on a monoidal category $(\categ C, \otimes ,\II)$ is the data of a comonad $(Q,w, n)$
on $\categ C$ together with a structure of a lax monoidal functor on $Q$
\begin{align*}
 		&Q(X) \otimes Q(Y) \to Q(X\otimes Y);
		\\
		&\II \to Q(\II) ;
\end{align*}
so that the natural transformations  $w: Q \to QQ$ and $n: Q \to \id$
are monoidal natural transformations.
\end{definition}

Then, a pseudo commutative monoid in the monoidal context of comonads is given by a symmetric monoidal category equipped with a commutative Hopf comonad.

\begin{definition}
Let $Q$ be a Hopf comonad in a symmetric monoidal category $\categ C$. It is said to be commutative if the structure of a lax monoidal functor on $Q$ is symmetric.
\end{definition}

Then, one can notice that
\begin{itemize}
    \itemt a lax $\catoperad A_\infty$-morphism in the context of comonads from $(\categ C, Q)$
    to $(\categ D, R)$ is given by a lax monoidal functor $F : \categ C \to \categ D$ and a natural transformation
$$
    FQ \to RF
$$
that is a monoidal natural transformation and that makes $F$ an oplax comonad functor;
\itemt an oplax $\catoperad A_\infty$-morphism in the context of comonads from $(\categ C, Q)$
    to $(\categ D, R)$ is given by an oplax monoidal functor $F : \categ C \to \categ D$ and a natural transformation
$$
    FQ \to RF
$$
that makes $F$ an oplax comonad functor
and
that is also a rewriting.
\end{itemize}

\subsubsection{Comonads comodules}

A pair of a pseudo monoid together with a left lax module in the monoidal context of comonads is the data of a
Hopf comonad $(Q,w,\tau)$ on a monoidal category $\categ E$
together with a category $\categ C$ tensored over $\categ E$ and a comonad $R$ on $\categ C$ equipped with the structure of a Hopf $Q$-comodule as defined in the following definition.

\begin{definition}
A structure of Hopf comodule comonad
on the comonad $(R,w',\tau')$ is the data of a strength on the functor $R$ with respect to the lax monoidal functor $Q$
\[
	Q(X) \boxtimes B(Y) \to B( X \boxtimes Y)
\]
so that the natural transformation $w'$ is strong with respect to the monoidal natural transformation
$w$ and so that $\tau'$ is strong
with respect to $\tau$.
\end{definition}

\subsection{Monads}

We have a canonical isomorphism of monoidal contexts
$$
\Cats \simeq \Cats^{co}
$$
that sends a category $\categ C$ to its opposite category $\categ C^{\op}$. This isomorphism lifts to an isomorphism between the monoidal context $\Comonads$ and the monoidal context of monads.

\subsubsection{The monoidal context of monads}

\begin{definition}
A category with monad $(\categ C, M)$ is the data of a category $\categ C$ and a monad $M$ on $\categ C$.
Equivalently, this is a 2-functor from $B(\Delta_{act}^{\op})$ to $\Cats$.
Moreover, a lax monad functor between two categories with monads $(\categ C, M)$ and $(\categ D, N)$ is a lax morphism of algebras over $B(\Delta_{act}^{\op})$, that is the data of a functor $F :\categ C \to \categ D$ and a natural transformation
\[
	N \circ F \xrightarrow{A} F \circ M
\]
so that the following diagrams commute
\[
\begin{tikzcd}
 	NNF \ar[r] \ar[d]
	& NFM
	\ar[r]
	& FMM
	\ar[d]
	\\
	NF
	\ar[rr]
	&& FM
\end{tikzcd}
\quad
\begin{tikzcd}
 	\id F \ar[r,equal] \ar[d]
	& F \id
	\ar[d]
	\\
	NF
	\ar[r]
	& FM .
\end{tikzcd}
\] 
\end{definition}

\begin{definition}
 Let $\categ{Monads}$ be the strict 2-category
 of $B(\Delta_{act}^{\op})$-algebras, lax $B(\Delta_{act}^{\op})$-morphisms and $B(\Delta_{act}^{\op})$ 2-morphisms.
\end{definition}

\begin{proposition}
 The strict 2-category $\Monads$ form a cartesian monoidal context and the construction
 $$
  \categ C \in \Cats \mapsto \categ C^{\op}
 $$
 induces a canonical isomorphism of monoidal contexts
 $$
 \Monads \simeq \Comonads^{co}.
 $$
\end{proposition}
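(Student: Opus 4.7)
The plan is to treat the two claims separately: first that $\Monads$ admits strict finite products, and second that the opposite-category involution on $\Cats$ induces the desired isomorphism of monoidal contexts.

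For the product structure, the argument mirrors the one for $\Comonads$. The product of $(\categ C,M)$ and $(\categ D,N)$ should be $(\categ C\times \categ D,\, M\times N)$, where $M\times N$ is the termwise monad $(x,y)\mapsto (Mx,Ny)$ with the pointwise multiplication and unit; the strict terminal object is $(\ast,\id_\ast)$. The verification reduces to the analogous universal properties in $\Cats$ together with the observation that a monad structure on a product functor $M\times N$ is the same as a pair of monad structures on the factors, and similarly that a lax monad functor into a product is the same as a pair of lax monad functors into the factors. This gives both strict binary products and a strict terminal object, hence a cartesian monoidal context structure on $\Monads$.

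For the isomorphism, I build the 2-functor $\Phi:\Monads\to\Comonads^{co}$ directly. On objects, $\Phi(\categ C,M)=(\categ C^\op, M^\op)$: a monad $(M,\mu,\eta)$ on $\categ C$ gives upon taking opposites a triple $(M^\op,\mu^\op,\eta^\op)$ where now $\mu^\op:M^\op\to M^\op M^\op$ and $\eta^\op:M^\op\to\id$, satisfying coassociativity and counit axioms, i.e.\ a comonad on $\categ C^\op$. On morphisms, a lax monad functor $(F,A:NF\to FM)$ is sent to $(F^\op,A^\op)$, and since natural transformations reverse under the opposite construction, $A^\op$ is a natural transformation $F^\op M^\op\to N^\op F^\op$, which exhibits $(F^\op,A^\op):(\categ C^\op,M^\op)\to(\categ D^\op,N^\op)$ as an oplax comonad functor. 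On $2$-morphisms, a natural transformation $\alpha:F\to F'$ of $\Monads$ becomes $\alpha^\op:(F')^\op\to F^\op$, which points in the reversed direction; this is precisely why the target $2$-category is $\Comonads^{co}$ and not $\Comonads$. One then checks that the defining squares for lax monad functors (compatibility with $\mu,\eta$) and for monad $2$-morphisms correspond, term by term after reversing arrows, to the defining squares for oplax comonad functors and comonad $2$-morphisms.

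Invertibility is automatic: applying the same construction from $\Comonads^{co}$ back to $\Monads$ yields a strict $2$-functor whose composite with $\Phi$ is, on every piece of data, the double-opposite $(-)^{\op\op}=\id$. Finally, because $(\categ C\times\categ D)^\op=\categ C^\op\times\categ D^\op$ and $(M\times N)^\op=M^\op\times N^\op$, and since $\Comonads^{co}$ has the same products as $\Comonads$ (the ``co'' only reverses $2$-morphisms), $\Phi$ sends strict products to strict products and the terminal object to the terminal object, hence inherits the canonical context-functor structure of Proposition on strict-product-preserving $2$-functors, and is an isomorphism of monoidal contexts. The only non-trivial bookkeeping is matching the two coherence diagrams of a lax monad functor with those of an oplax comonad functor under reversal of arrows; this is the only place where care is needed, but it is a direct translation with no hidden obstacle.
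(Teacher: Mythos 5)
Your proof is correct and fills in exactly the verification the paper leaves as ``straightforward'': termwise products for the cartesian structure, and the lift of $\categ C\mapsto\categ C^{\op}$ that turns lax monad functors $NF\to FM$ into oplax comonad functors $F^{\op}M^{\op}\to N^{\op}F^{\op}$ while reversing $2$-morphisms, which is precisely why the codomain is $\Comonads^{co}$. No discrepancy with the paper's (omitted) argument.
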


\begin{proof}
 Straightforward.
\end{proof}

\subsubsection{Hopf monads and module monad over a Hopf comonad}

A pseudo-monoid in the monoidal context of monads is a monoidal category equipped with a Hopf monad.

\begin{definition}\cite{Moerdijk02}
 A Hopf monad on a monoidal category $(\categ C, \otimes ,\II)$ is the data of a monad $M$ together with the structure of a Hopf comonad on the related comonad on $\categ C^{\op}$. Equivalently, this is the data of a monad $M$
 on $\categ C$ together with a structure of an oplax monoidal functor on $M$
\begin{align*}
 		&M(X \otimes Y) \to M(X) \otimes M(Y)
		\\
		&M(\II) \to \II
\end{align*}
so that the natural transformations  $m: MM \to M$ and $u: \id \to M$
are monoidal natural transformations.
\end{definition}

Let us consider a Hopf comonad $Q$ on a monoidal category $\categ C$, a category $\categ D$ cotensored over $\categ C$ through a bifunctor
\begin{align*}
\categ D \times \categ C^{\op} &\to \categ D    
\\
(X,Y) &\mapsto \langle X,Y \rangle ;
\end{align*}
and a monad $M$ on $\categ D$.
A structure of a lax right $(\categ C,Q)$-module on $(\categ D^{\op}, M)$ in the monoidal context of comonads that enhances the cotensorisation of $\categ D$ by $\categ C$ corresponds to the structure of a Hopf $Q$-module monad on $M$.

\begin{definition}
A Hopf $Q$-module monad is the data of a monad $(M,m,u)$ on $\categ D$ together
with a strength on the functor $M$ with respect to the lax monoidal functor $Q$
\[
	M(\langle X,Y \rangle)
	\to \langle M(X), Q(Y) \rangle
\]
so that the following diagrams commute
$$
\begin{tikzcd}
     MM \langle X, Y \rangle
     \ar[r] \ar[d]
     & M \langle M(X), Q(Y) \rangle
     \ar[r]
     & \langle MM(X), QQ(Y) \rangle
     \ar[d]
     \\
     M \langle X, Y \rangle
     \ar[rr]
     && \langle MM(X), QQ(Y) \rangle ;
\end{tikzcd}
$$
$$
\begin{tikzcd}
     \langle X, Y \rangle
     \ar[r,equal] \ar[d]
     & \langle X, Y \rangle
     \ar[d]
     \\
     M \langle X, Y \rangle
     \ar[r]
     & \langle M(X), Q(Y) \rangle .
\end{tikzcd}
$$
\end{definition}

\subsection{From comonads to coalgebras and back to comonads}

Let us consider two pairs $(\categ C, Q)$ and $(\categ D, R)$ of a categories
with comonads (that is objects in $\Comonads$).

\begin{proposition}\label{prop : naturcomonad}
 Let $F: \categ C \to \categ D$ be a functor. Then there is a canonical bijection between
 \begin{enumerate}
     \item the set of functors $F_{cog} : \Cog_{\categ C}(Q) \to \Cog_{\categ D}(R)$
 that lifts $F: \categ C \to \categ D$ (that is $FU_Q = U_RF_{cog}$);
 \item the set of oplax comonad structures on $F$
 \[
	F \circ Q \xrightarrow{\beta} R \circ F ,
\]
with respect to $Q$ and $R$.
 \end{enumerate}
\end{proposition}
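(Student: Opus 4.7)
The plan is to give explicit constructions in both directions and then verify they are mutually inverse. The key input is that the cofree $Q$-coalgebra functor sends an object $X \in \categ C$ to $(QX, w_X)$, where $w$ denotes the comultiplication of $Q$, and that the counit of the adjunction $U_Q \dashv L^Q$ coincides with the comonad counit $n$. This lets us recover $\beta$ from $F_{cog}$ by applying $F_{cog}$ to cofree coalgebras and then composing with the counit.

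For the direction (2) $\Rightarrow$ (1), given $\beta : FQ \to RF$, define $F_{cog}(X,\delta) = (FX, \beta_X \circ F\delta)$ and $F_{cog}(f) = F(f)$. Pasting the coassociativity square of $\delta$ with the comultiplication square for $\beta$ gives the coassociativity of $\beta_X \circ F\delta$, and pasting the counit square of $\delta$ with the counit square for $\beta$ gives its counit axiom. The naturality of $\beta$ ensures that $F(f)$ is a morphism of $R$-coalgebras, and by construction $FU_Q = U_R F_{cog}$.

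For the direction (1) $\Rightarrow$ (2), given $F_{cog}$ lifting $F$, write $(FQX,\gamma_X) := F_{cog}(QX, w_X)$; naturality of $\gamma$ in $X$ follows from the functoriality of $F_{cog}$ and of $L^Q$. Define
$$
\beta_X \; : \; FQX \xrightarrow{\gamma_X} RFQX \xrightarrow{RF(n_X)} RFX .
$$
The two oplax comonad axioms for $\beta$ then reduce to the $R$-coalgebra axioms for $\gamma_X$, the comonad axioms for $Q$ and $R$, and the naturality of $n$.

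It remains to check that these two assignments are mutually inverse. Starting from $\beta$, the recovered transformation is $\beta'_X = RF(n_X) \circ \beta_{QX} \circ F(w_X)$; naturality of $\beta$ at $n_X : QX \to X$ yields $\beta_X \circ FQ(n_X) = RF(n_X) \circ \beta_{QX}$, and the comonad counit identity $Q(n_X) \circ w_X = \mathrm{id}_{QX}$ collapses the remaining factor to give $\beta'_X = \beta_X$. Conversely, for any $Q$-coalgebra $(X,\delta)$, the map $\delta$ is itself a morphism of coalgebras $(X,\delta) \to (QX,w_X)$ (the unit of $U_Q \dashv L^Q$), so $F\delta$ is a morphism of $R$-coalgebras; the morphism condition combined with the coalgebra counit identity $n_X \circ \delta = \mathrm{id}_X$ shows that the structure map of $F_{cog}(X,\delta)$ equals $\beta_X \circ F\delta$, which is the structure map produced by the construction (2) $\Rightarrow$ (1) applied to $\beta$. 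The only real obstacle is bookkeeping the various squares; once the cofree coalgebra is brought in, each step is a short diagram chase using the comonad and coalgebra axioms.
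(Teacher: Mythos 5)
Your proof is correct and follows essentially the same route as the paper: the direction $(2)\Rightarrow(1)$ is identical, and your formula $\beta_X = RF(n_X)\circ\gamma_X$ with $\gamma_X$ the structure map of $F_{cog}(QX,w_X)$ is exactly the paper's mate construction $FQ = U_RF_{cog}L^Q \to U_RL^RF = RF$ unfolded pointwise. The only difference is that you spell out the verifications (the oplax axioms and the mutual-inverse check) that the paper leaves as "a straightforward check", and your details are right.
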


\begin{proof}
Given a functor $F_{cog} : \Cog_{\categ C}(Q) \to \Cog_{\categ D}(R)$ that lifts $F$, the equality $FU_Q  = U_R F_{cog}$ gives us by adjunction a morphism
$$
    F_{cog} L^Q \to L^R F
$$
and then a morphism
$$
    FQ = F U_QL^Q = U_R F_{cog} L^Q
    \to U_R L^R F = RF .
$$
One can check that the resulting map
$FQ \to RF$ is an oplax comonad structure on $F$. 

Conversely, given an oplax comonad structure $FQ \to RF$ on $F$, then for any $Q$-algebra $V$, the object $F(V)$ has the structure of
a $R$-coalgebra given by the map
\[
	F(V) \to FQ(V) \to RF(V).
\]
This construction is natural and defines the expected lifting functor.

A straightforward check shows that the two constructions are inverse to each other.
\end{proof}

\begin{corollary}\label{prop : naturmonad}
Given a monad $M$ on $\categ C$ and a monad $N$ on $\categ{D}$,
there is a canonical bijection between
\begin{enumerate}
 \item the set of functors $F_{alg} : \catalg{\categ C}(M) \to \catalg{\categ D}(N)$
 that lifts $F: \categ C \to \categ D$ (that is $FU^M= U^NF_{alg}$);
 \item the set of lax monad structures on $F$
 \[
	N \circ F \xrightarrow{\alpha} F \circ M ,
\]
with respect to $M$ and $N$.
\end{enumerate}
\end{corollary}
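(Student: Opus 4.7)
The plan is to deduce this directly from Proposition~\ref{prop : naturcomonad} via the canonical isomorphism of monoidal contexts $\Monads \simeq \Comonads^{co}$ established above. Under this isomorphism, a category with monad $(\categ C, M)$ corresponds to the category with comonad $(\categ C^{\op}, M^{\op})$, the category of $M$-algebras $\mathsf{Alg}_{\categ C}(M)$ is identified with $\Cog_{\categ C^{\op}}(M^{\op})^{\op}$, and the forgetful functor $U^M$ corresponds under opposites to $U_{M^{\op}}$ (the monadic adjunction $T_M \dashv U^M$ becoming, on opposite categories, the comonadic adjunction $U_{M^{\op}} \dashv L^{M^{\op}}$).

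First, I would observe that the datum of a functor $F_{alg} : \mathsf{Alg}_{\categ C}(M) \to \mathsf{Alg}_{\categ D}(N)$ with $F U^M = U^N F_{alg}$ is equivalent, upon taking opposites, to that of a functor $G : \Cog_{\categ C^{\op}}(M^{\op}) \to \Cog_{\categ D^{\op}}(N^{\op})$ with $F^{\op} U_{M^{\op}} = U_{N^{\op}} G$, i.e.\ a lift of $F^{\op}$ in the sense of Proposition~\ref{prop : naturcomonad}. Applying that proposition to $F^{\op}$ with respect to the comonads $M^{\op}$ and $N^{\op}$ then produces a canonical bijection with oplax comonad structures $F^{\op} \circ M^{\op} \to N^{\op} \circ F^{\op}$ on $F^{\op}$.

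Finally, I would observe that, taking opposites of natural transformations, such an oplax comonad structure on $F^{\op}$ is exactly a natural transformation $\alpha : N \circ F \to F \circ M$ whose two coherence squares, when read in $\categ D^{\op}$ with the comultiplication and counit of the comonad $M^{\op}$ and $N^{\op}$, become, upon reversing every arrow, precisely the coherence squares involving the multiplication and unit of $M$ and $N$ that define a lax monad structure on $F$.

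The main and essentially only task is this bookkeeping: checking that the duality between $\Monads$ and $\Comonads^{co}$ intertwines the two descriptions of liftings and their coherence diagrams. It is entirely routine once the isomorphism $\Monads \simeq \Comonads^{co}$ is in hand, and no computation beyond Proposition~\ref{prop : naturcomonad} is needed, since the corollary is the formal mirror image of that proposition under the established duality.
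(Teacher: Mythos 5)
Your proposal is correct, but it takes a genuinely different route from the paper. The paper's own proof of this corollary simply says that it ``follows from the same arguments'' as Proposition~\ref{prop : naturcomonad}, i.e.\ it re-runs the construction directly: given a lax monad structure $\alpha : NF \to FM$, each $M$-algebra $A$ is sent to $F(A)$ with $N$-algebra structure $NF(A) \to FM(A) \to F(A)$, and conversely the lift is probed on free algebras. You instead deduce the statement formally from the comonad case via the isomorphism $\Monads \simeq \Comonads^{co}$ recorded in the paper, identifying $\Alg_{\categ C}(M)$ with $\Cog_{\categ C^{\op}}(M^{\op})^{\op}$, the condition $FU^M = U^N F_{alg}$ with $F^{\op}U_{M^{\op}} = U_{N^{\op}}G$, and an oplax comonad structure $F^{\op}M^{\op} \to N^{\op}F^{\op}$ with a natural transformation $NF \to FM$ whose reversed coherence squares are exactly the lax monad ones. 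All of these identifications are accurate (taking opposites is an involution on functors and reverses natural transformations, so the sets really are in canonical bijection), so the argument is sound. What your approach buys is that no part of the comonad proof needs to be repeated --- the corollary becomes a pure formality --- at the cost of the op-bookkeeping; the paper's approach avoids that bookkeeping but implicitly asks the reader to redo the adjunction argument mutatis mutandis. Both are legitimate, and yours is arguably the cleaner justification for calling the statement a corollary rather than a proposition.
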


\begin{proof}
This follows from the same arguments as those used to prove Proposition \ref{prop : naturcomonad}.
In particular, for any $M$-algebra $A$, the object $F(A)$ has the structure of
a $N$ algebra given by the map
\[
	NF(A) \to FM(A) \to F(A) .
\]
\end{proof}

\begin{proposition}\label{prop : naturrestric}
 Let us consider two functors $F,G : \categ C \to \categ D$ together with liftings $F_{cog}, G_{cog} : \Cog_{\categ C}(Q) \to \Cog_{\categ D}(R)$ to the categories of coalgebras, that correspond to oplax comonad structures on $F$ respectively denoted $\alpha$ and $\beta$. Moreover, let $A : F \to G$ be a natural transformation. Then, the following assertions are equivalent:
 \begin{enumerate}
     \item the natural transformation $A$ lifts to a 2-morphism in $\Comonads$ from $(F,\alpha)$ to $(G,\beta)$;
     \item the natural transformation $A$ lifts to a 2-morphism in $\Functors$ from $(F,F_{cog})$ to $(G,G_{cog})$;
     \item for any $Q$-coalgebra $V$, the map
     $$
        A(V) : F(V) \to G(V)
     $$
     is a morphism of $R$-coalgebras.
 \end{enumerate}
\end{proposition}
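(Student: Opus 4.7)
The plan is to prove the three conditions equivalent by establishing $(1) \Leftrightarrow (3)$ and $(2) \Leftrightarrow (3)$, with condition $(3)$ acting as the concrete pointwise bridge between the two ``lifting'' statements. Condition $(3)$ unpacks the coalgebra compatibility in the most convenient form for manipulation, and the other two conditions can be reduced to it by routine reasoning, with one exception: a short diagram chase.

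For $(2) \Leftrightarrow (3)$, I will unpack definitions. A 2-morphism in $\Functors$ from $(F, F_{cog})$ to $(G, G_{cog})$ is a pair of natural transformations $A: F \to G$ and $A_{cog}: F_{cog} \to G_{cog}$ satisfying $A \circ_h U_Q = U_R \circ_h A_{cog}$. Since $U_R$ is faithful, $A_{cog}$ is uniquely determined by $A$, with each component equal to $A(V)$ interpreted as a morphism in $\Cog_{\categ D}(R)$; such an interpretation is legitimate exactly when each $A(V)$ is an $R$-coalgebra morphism, and naturality of $A_{cog}$ then follows from that of $A$ since $U_R$ reflects commutative squares. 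For $(1) \Rightarrow (3)$, I observe that $(1)$ is the equation $RA \circ \alpha = \beta \circ (A \circ_h Q)$ of natural transformations $FQ \to RG$, while $(3)$ applied to a coalgebra $(V, \rho)$ reads $RA(V) \circ \alpha_V \circ F(\rho) = \beta_V \circ G(\rho) \circ A(V)$; applying naturality of $A$ to $\rho$ rewrites this as the evaluation of $(1)$ at $V$ precomposed with $F(\rho)$, so the implication is automatic.

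The main obstacle will be the converse $(3) \Rightarrow (1)$. I will apply $(3)$ to the cofree $Q$-coalgebra $L^Q X = (QX, w_X)$ to obtain $RA(QX) \circ \alpha_{QX} \circ F(w_X) = \beta_{QX} \circ A(QQX) \circ F(w_X)$, and then postcompose with $RG(\epsilon_X)$, where $\epsilon : Q \to \id$ is the counit of the comonad. On the left, naturality of $A$ at $\epsilon_X$ followed by naturality of $\alpha$ at $\epsilon_X$, combined with the comonad identity $Q(\epsilon_X) \circ w_X = \id_{QX}$, collapses the composite to $RA(X) \circ \alpha_X$; symmetrically, naturality of $\beta$ at $\epsilon_X$ and then of $A$ at $Q(\epsilon_X)$, together with the same identity, collapses the right-hand side to $\beta_X \circ A(QX)$. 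This recovers precisely the $X$-component of $(1)$. I expect the diagram chase, juggling three naturality squares against one comonad axiom, to be the sole point requiring care; everything else is essentially bookkeeping.
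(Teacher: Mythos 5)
Your proposal is correct and follows essentially the same route as the paper: the equivalence $(2)\Leftrightarrow(3)$ by unwinding definitions, $(1)\Rightarrow(3)$ by one naturality square, and $(3)\Rightarrow(1)$ by evaluating $(3)$ on the cofree coalgebra $L^Q X=(QX,w_X)$ and collapsing via naturality of $A$, $\alpha$, $\beta$ at the counit together with the identity $Q(\epsilon_X)\circ w_X=\mathrm{id}$. The paper packages this last chase as a pasting of three commuting squares, but the computation is the same; your linear write-up is, if anything, slightly more explicit about which naturality square is used where.
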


\begin{proof}
The assertion (2) is clearly equivalent to (3). Let us prove that (3) is equivalent to (1).

On the one hand, let us assume (1). Then for any
$Q$-coalgebra $V$, the following diagram commutes
$$
\begin{tikzcd}
     F(V)
     \ar[d, "A(V)"']
     \ar[r]
     & FQ(V)
     \ar[d,"A(Q(V))"]
     \ar[r]
     & RF(V)
     \ar[d,"R(A(V))"']
     \\
     G(V)
     \ar[r]
     & GQ(V)
     \ar[r]
     &RG(V)
\end{tikzcd}
$$
Thus, the map $A(V) : F(V) \to G(V)$ is a morphism of $R$-coalgebra.

Conversely, let us assume (3). Then, the square diagram
$$
\begin{tikzcd}
     FQ(X)
     \ar[d,"A(Q(X))"] \ar[r]
     & RF(X)
     \ar[d,"R(A(X))"']
     \\
     GQ(X)
     \ar[r]
     &RG(X)
\end{tikzcd}
$$
decomposes as 
$$
\begin{tikzcd}
     FQ(X)
     \ar[d,"A(Q(X))"'] \ar[r]
     & FQQ(X) \ar[d,"A(QQ(X))"] \ar[r]
     & RFQ(X) \ar[d,"R(A(Q(X)))"] \ar[r]
     & RF(X)
     \ar[d,"R(A(X))"]
     \\
     GQ(X)
     \ar[r]
     & GQQ(X)
     \ar[r]
     & RGQ(X)
     \ar[r]
     & RG(X)
\end{tikzcd}
$$
The left square and the right square are commutative by naturality. The middle square is commutative since $Q(X)$ is a $Q$-coalgebra.
Hence, the whole square is commutative, which shows (1).
\end{proof}

\begin{theorem}\label{main}
The construction that sends a category with a comonad
$(\categ C,Q)$ to the functor $\Cog_{\categ C}(Q) \to \categ C$ canonically induces a 2-functor from $\Comonads$ to $\Functors$ that is strictly fully faithful and that preserves strict finite products.
\end{theorem}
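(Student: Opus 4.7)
The plan is to construct the 2-functor by hand using Propositions~\ref{prop : naturcomonad} and~\ref{prop : naturrestric}, and then derive strict fully faithfulness and preservation of strict finite products almost directly from these results.

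On objects, I send $(\categ C, Q)$ to $U_Q : \Cog_{\categ C}(Q) \to \categ C$. On 1-morphisms, Proposition~\ref{prop : naturcomonad} assigns to each oplax comonad functor $(F, \alpha)$ a canonical lift $F_{cog}$ satisfying $F \circ U_Q = U_R \circ F_{cog}$, so $(F, \alpha) \mapsto (F_{cog}, F)$ defines a morphism in $\Functors$. On 2-morphisms, Proposition~\ref{prop : naturrestric} (the equivalence of (1) and (2)) sends a 2-morphism $A$ in $\Comonads$ to the pair $(A_{cog}, A)$ in $\Functors$. The real content is checking 2-functoriality: preservation of identities is immediate, and the key computation is that the lift of a composite oplax comonad functor equals the composite of the individual lifts. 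Given $(F, \alpha) : (\categ C, Q) \to (\categ D, R)$ and $(G, \beta) : (\categ D, R) \to (\categ E, S)$, both $(GF)_{cog}(V)$ and $G_{cog}(F_{cog}(V))$ produce the same explicit $S$-coalgebra structure on $GF(V)$, namely $GF(V) \to GFQ(V) \xrightarrow{G(\alpha_V)} GRF(V) \xrightarrow{\beta_{F(V)}} SGF(V)$ for a $Q$-coalgebra $V$. Compatibility with vertical and horizontal composition of 2-morphisms follows immediately because the underlying natural transformations are preserved by the construction.

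Strict fully faithfulness is then immediate. For each pair of objects $(\categ C, Q), (\categ D, R)$, the induced functor
\[
 \Comonads((\categ C, Q), (\categ D, R)) \to \Functors(U_Q, U_R)
\]
is a bijection on objects by Proposition~\ref{prop : naturcomonad} and a bijection on morphisms by Proposition~\ref{prop : naturrestric}, hence is an isomorphism of categories, which is precisely strict full faithfulness. For preservation of strict finite products, the terminal object $(\ast, \id_\ast)$ in $\Comonads$ maps to $\id_\ast : \ast \to \ast$, the terminal object of $\Functors$. For binary products, a $(Q \times R)$-coalgebra in $\categ C \times \categ D$ is canonically the same datum as a pair of a $Q$-coalgebra and an $R$-coalgebra, giving the canonical isomorphism $\Cog_{\categ C \times \categ D}(Q \times R) \simeq \Cog_{\categ C}(Q) \times \Cog_{\categ D}(R)$ under which the forgetful functor becomes $U_Q \times U_R$, which is the product in $\Functors$.

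The main obstacle is really only the 2-functoriality check in the first step: once the composition formula above is verified by direct inspection, every other claim reduces to a direct invocation of Propositions~\ref{prop : naturcomonad} and~\ref{prop : naturrestric} together with the explicit description of products in the two 2-categories involved.
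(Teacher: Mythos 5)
Your proposal is correct and follows essentially the same route as the paper: the 2-functor is assembled from Propositions~\ref{prop : naturcomonad} and~\ref{prop : naturrestric}, strict full faithfulness is read off from the bijections those propositions provide, and product preservation is checked on the terminal object and binary products (the paper phrases this last step as the statement that both 2-functors $(\categ C,Q)\mapsto\catcog{\categ C}{Q}$ and $(\categ C,Q)\mapsto\categ C$ preserve strict finite products, which amounts to your explicit identification $\Cog_{\categ C\times\categ D}(Q\times R)\simeq\Cog_{\categ C}(Q)\times\Cog_{\categ D}(R)$). Your explicit verification that $(GF)_{cog}=G_{cog}\circ F_{cog}$ is a welcome addition that the paper leaves implicit.
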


\begin{proof}
Such a 2-functor sends a morphism (that is an oplax comonad functor) $(F,A)$ to the pair of functors $(F,F_{cog}) $ defined in Proposition \ref{prop : naturcomonad}, and a 2-morphism (that is a natural transformation) $A' : (F,A) \to (G,B)$ to the pair of natural transformation $(A'',A')$ whose first component is defined in Proposition \ref{prop : naturrestric}.

It is strictly fully faithful by Proposition \ref{prop : naturcomonad}
and Proposition \ref{prop : naturrestric} and it preserves strict finite products because both 2-functors $(\categ C,Q) \mapsto \catcog{\categ C} Q$ and
$(\categ C,Q) \mapsto \categ C$ do.
\end{proof}

Hence, any algebraic structure inside the 2-category $\categ{Comonads}$
may equivalently be described using forgetful functors from categories of coalgebras to the ground category.

\begin{corollary}
 The construction that sends a category with a monad
$(\categ C,M)$ to the functor $\catalg{\categ C} M \to \categ C$ canonically induces a 2-functor from $\Monads$ to $\Functors$ that is strictly fully faithful and that preserves strict finite products.
\end{corollary}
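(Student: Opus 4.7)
The plan is to deduce this corollary directly from Theorem \ref{main} by invoking the canonical isomorphism of monoidal contexts $\Monads \simeq \Comonads^{co}$ established in the preceding subsection. First I would observe that an entirely analogous duality exists on the target: sending a functor $F : \categ C \to \categ D$ to its opposite $F^{\op} : \categ C^{\op} \to \categ D^{\op}$ defines an isomorphism of monoidal contexts $\Functors \simeq \Functors^{co}$, since taking opposites of categories preserves underlying functors and the strict commutativity of squares while reversing natural transformations. Both of these isomorphisms are strict finite product preserving iso-equivalences.

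Composing the 2-functor $\Comonads \to \Functors$ of Theorem \ref{main} with these two isomorphisms then produces the desired 2-functor $\Monads \to \Functors$. To check that it coincides with the prescribed construction, I would unfold its definition on objects: a pair $(\categ C, M) \in \Monads$ is first sent across the first isomorphism to the pair $(\categ C^{\op}, M^{\op}) \in \Comonads^{co}$, then by Theorem \ref{main} to the forgetful functor $\Cog_{\categ C^{\op}}(M^{\op}) \to \categ C^{\op}$, and finally back through the second isomorphism. Since $M^{\op}$-coalgebras in $\categ C^{\op}$ are tautologically the same data as $M$-algebras in $\categ C$ viewed in the opposite category, this composite canonically identifies with the forgetful functor $\catalg{\categ C}{M} \to \categ C$. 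The corresponding unfolding on morphisms and 2-morphisms recovers lax monad morphisms and their 2-morphisms exactly as described by Corollary \ref{prop : naturmonad} and the monad analogue of Proposition \ref{prop : naturrestric}, confirming that the 2-functor constructed by duality agrees on the nose with the prescribed assignment.

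The main point that remains is that both the $co$-construction on monoidal contexts and isomorphisms of monoidal contexts preserve the properties of being strictly fully faithful and of preserving strict finite products; but this is essentially formal. An isomorphism of hom-categories remains one when their 2-morphisms are formally reversed, so strict full faithfulness is preserved by the $co$-construction; and strict finite products in a strict 2-category are characterised by universal properties on hom-categories which transport across the $co$-construction in the obvious way. The only obstacle is therefore pure bookkeeping, namely matching the composite 2-functor coming from the chain of isomorphisms with the explicit description in the statement, which presents no real difficulty.
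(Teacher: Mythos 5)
Your argument is correct and takes exactly the route the paper intends: the corollary is stated without proof precisely because it is meant to follow from Theorem \ref{main} via the isomorphism $\Monads \simeq \Comonads^{co}$ established earlier, together with the induced identification $\Functors \simeq \Functors^{co}$ coming from $\Cats \simeq \Cats^{co}$. Your unfolding of the composite on objects, morphisms and 2-morphisms, and your check that strict full faithfulness and strict finite products transport across the $co$- and $\op$-constructions, supply the bookkeeping the paper leaves implicit; there are no gaps.
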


\subsection{Consequences}
One can draw several consequences from Theorem \ref{main}.

\subsubsection{Monoidal categories}

Given a monoidal category $(\categ C, \otimes, \II)$ and a comonad $(Q,w,n)$ (resp. a monad $(M,m,u)$),
there is a canonical bijection between
\begin{enumerate}
 \item the set of structures of a monoidal category on $Q$-algebras (resp. $M$-algebras)
 that lift that of $\categ C$ (that is the forgetful functor $\Cog_{\categ C}(Q)\to \categ C$ is strict monoidal);
 \item the set of structures of a Hopf comonad on $Q$ (resp. structures of a Hopf monad on $M$).
\end{enumerate}

Indeed, given a structure of a Hopf comonad on $Q$, the tensor product of two $Q$-coalgebras $V,W$ and the unit $\II$ inherit
structures of $Q$-coalgebras through the formulas
\begin{align*}
    V \otimes W &\to Q(V) \otimes Q(W) \to Q(V \otimes W);
    \II \to Q(\II).
\end{align*}
Conversely, from a structure of a monoidal category on $Q$-coalgebras that lifts that of $\categ C$, one obtain the structure of a Hopf comonad on $Q$ by lifting the natural map
 \[
 	Q(X) \otimes B(Y) \xrightarrow{\tau(X),\tau(Y)} X \otimes Y
 \]
to $Q(X \otimes Y)$

\subsubsection{Lax monoidal functors}

Now, let us consider Hopf comonads $Q$ and $O$ on monoidal categories respectively $\categ C$ and $\categ D$ and a lax monoidal functor $F: \categ C \to \categ D$. The two following assertions
are equivalent
\begin{enumerate}
 \item the natural  transformation $FQ \to OF$ is monoidal;
 \item the natural map in $\categ F$
\[
	F(V) \otimes F(W) \to  F(V \otimes W)
\]
induced by the structure of a lax monoidal functor on $F$
is a morphism of $O$-coalgebras for any two $Q$-coalgebras $V,W$.
\end{enumerate}
If these assertions are true, then the structure of a lax monoidal functor on $F: \categ C \to \categ D$
induces a structure of a lax monoidal functor on $F_{cog}: \catcog{\categ C} Q \to \catcog{\categ O} O$.

\subsubsection{Modules}

Let $(\categ C, Q)$ be monoidal category and a Hopf comonad
and let $R$ be a comonad on a category $\categ D$ tensored by $\categ C$.
Then, there is a canonical bijection
between
\begin{enumerate}
 \item the set of tensorisations of the category
of $R$-coalgebras by the monoidal category of $Q$-coalgebras that lifts the tensorisation
of $\categ D$ by $\categ C$;
\item the set of structures of a Hopf $Q$-module comonad
on $R$.
\end{enumerate}

Similarly, if $M$ is a monad on $\categ E$ which is cotensored by $\categ C$, then there is a canonical bijection
between
\begin{enumerate}
 \item the set of cotensorisations of the category
of $M$-algebras by the monoidal category of $Q$-coalgebras that lifts the cotensorisation
of $\categ E$ by $\categ C$;
\item the set of structures of a Hopf $Q$-module monad
on $M$.
\end{enumerate}

\subsection{The adjoint lifting theorem}
\label{sec adjiliftthm}
In this subsection, we recall the adjoint lifting theorem and its link with (op)lax (co)monad functors.

Let us consider an oplax comonad functor $(L,A): (\categ C,Q) \to (\categ D,O)$.
Let us assume that the functor $L$ has a right adjoint $R$. Then, the structure of an oplax comonad functor $LQ \to OL$ on $L$ induces
by doctrinal adjunction the structure of a lax comonad functor
on the right adjoint $R$
$$
    QR \to RLQR \to ROLR \to RO . 
$$
Thus, for any $O$-coalgebra $W$, let us consider the two following morphisms of $Q$-coalgebras from $L^Q R(W)$ to $L^QRO(W)$:
\begin{itemize}
    \itemt  on the one hand, the morphism induced by the map $W \to O(W)$;
    \itemt on the other hand, the composite morphism
    $$
    L^Q R(W) \to L^QQR(W) \to L^QRO(W).
    $$
\end{itemize}
This gives us a coreflexive pair of maps
\begin{equation}\label{eqadjlift}
    L^Q R(W) \rightrightarrows L^QRO(W)
\end{equation}
with common left inverse induced by the map $O(W) \to W$.

\begin{theorem}[Adjoint lifting theorem, \cite{Johnstone75}]\label{them adjliftthm}
 The functor $L_{cog}$ has a right adjoint if and only if the pair of maps just above in diagram \ref{eqadjlift} has an equaliser for any $O$-coalgebra $W$. Then, such a limit defines the value of this right adjoint functor on $W$.
\end{theorem}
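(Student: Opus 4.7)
The plan is to reduce the existence of a right adjoint to $L_{cog}$ to the existence of the equalizer in the statement by chaining the two adjunctions $L \dashv R$ and $U_Q \dashv L^Q$ and reinterpreting the coalgebra compatibility condition along the way. This is the classical Dubuc--Johnstone argument applied to the present oplax setting.

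First I would unpack a coalgebra morphism $f : L_{cog}(V) \to W$ via Proposition~\ref{prop : naturcomonad} as a morphism $f : L(V) \to W$ in $\categ D$ satisfying
\[
    w \circ f = O(f) \circ \alpha_V \circ L(v),
\]
where $v : V \to Q(V)$ is the coalgebra structure of $V$ and $\alpha : LQ \to OL$ is the oplax structure of $L$. Letting $\eta, \epsilon$ denote the unit and counit of $L \dashv R$ and writing $f^\sharp : V \to R(W)$ for the adjunct of $f$, a diagram chase using the naturality of $\eta$, the naturality of $\alpha$, and the definition $\beta_W = RO(\epsilon_W) \circ R(\alpha_{R(W)}) \circ \eta_{QR(W)}$ of the induced lax comonad structure $\beta : QR \to RO$ shows that the above equation is equivalent to
\[
    R(w) \circ f^\sharp = \beta_W \circ Q(f^\sharp) \circ v.
\]

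Next, via $U_Q \dashv L^Q$, such an $f^\sharp$ corresponds to a unique coalgebra morphism $\tilde f : V \to L^Q R(W)$. A short computation, using that the underlying comultiplication of the cofree coalgebra $L^Q R(W)$ is $\Delta_{R(W)}$, the comonad counit--comultiplication identity, and the counit axiom of the coalgebra $V$, identifies $R(w) \circ f^\sharp$ as the $\categ C$-adjunct of $L^Q(R(w)) \circ \tilde f$ and identifies $\beta_W \circ Q(f^\sharp) \circ v$ as the $\categ C$-adjunct of the second map of the coreflexive pair composed with $\tilde f$. The coalgebra compatibility of $f$ therefore translates into the equalizing condition for the pair~\eqref{eqadjlift} applied to $\tilde f$.

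Consequently, the set of coalgebra morphisms $L_{cog}(V) \to W$ is identified, naturally in $V$, with the subset of coalgebra morphisms $V \to L^Q R(W)$ equalizing the coreflexive pair. If this pair admits an equalizer in $\Cog(Q)$ for every $W$, defining $R_{cog}(W)$ to be that equalizer produces a right adjoint to $L_{cog}$; conversely, if such a right adjoint $R_{cog}$ exists, Yoneda forces $R_{cog}(W)$ to be this equalizer. The main technical obstacle is the diagram chase of the first step, translating the coalgebra compatibility of $f$ into the $\beta$-compatibility of $f^\sharp$: one must be careful with the triangle identities for $L \dashv R$ and the directions of the naturality squares for $\alpha$ and $\eta$, while the remainder of the argument is a routine application of Yoneda.
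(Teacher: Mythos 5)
Your proposal is correct. The paper offers no actual argument for this theorem (its proof reads, in full, ``Straightforward'', deferring to Johnstone), so there is no competing approach to compare against; what you have written is the standard Dubuc--Johnstone argument that the paper implicitly has in mind, with the details supplied. The three key translations all check out: the coalgebra condition $w\circ f = O(f)\circ\alpha_V\circ L(v)$ transposes under $L\dashv R$ to $R(w)\circ f^{\sharp}=\beta_W\circ Q(f^{\sharp})\circ v$ (using naturality of $\eta$ and of $\alpha$ together with the stated formula for $\beta$), and the further transposition under $U_Q\dashv L^Q$ identifies the two sides with the adjuncts of the two legs of the coreflexive pair postcomposed with $\tilde f$ (using $\varepsilon^Q Q\circ\Delta=\mathrm{id}$ and the counit axiom of $V$, exactly as you indicate). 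Since all three correspondences are natural bijections, the conclusion in both directions is indeed a routine Yoneda argument, and the coreflexivity of the pair plays no role in the equivalence itself --- it only matters for the later corollaries guaranteeing existence of the equalisers.
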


\begin{proof}
 Straightforward.
\end{proof}

Besides, one can factorise the oplax comonad functor $(L,A)$ from $(\categ C, Q)$ to $(\categ D, O)$ as follows.

\begin{proposition}
 The endofunctor $LQR$ of $\categ D$ has the canonical structure of a comonad. Moreover, the oplax comonad functor $(L,A)$ factorises as
 $$
 (\categ C, Q) \xrightarrow{(L,A')} (\categ D, LQR)
 \xrightarrow{(\id,A'')} (\categ D, O)
 $$
 where $A'$ and $A''$ are respectively the natural maps
 \begin{align*}
     &LQ \xrightarrow{LQ\eta} LQRL ;
     \\
     &LQR \xrightarrow{A} OLR \xrightarrow{O\epsilon} O.
 \end{align*}
\end{proposition}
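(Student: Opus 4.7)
The plan is to verify the two claims in sequence: that $LQR$ has a comonad structure, that $A'$ and $A''$ are oplax comonad structures, and that their composite in $\Comonads$ returns $(L,A)$.

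First, I would define the counit of $LQR$ as $\epsilon \circ LnR \colon LQR \to LR \to \id_{\categ D}$ and the comultiplication as $LQ\eta QR \circ LwR \colon LQR \to LQQR \to LQRLQR$, where $(w,n)$ denote the comultiplication and counit of $Q$ and $(\eta,\epsilon)$ those of the adjunction $L \dashv R$. That this is a comonad is a standard bookkeeping argument: coassociativity follows from coassociativity of $w$ plus the fact that inserting $\eta$ at two independent positions in $LQQQR$ commutes (a manifestation of naturality of $\eta$), and each counit axiom collapses by the counit axiom for $Q$ together with one of the triangle identities $\epsilon L \circ L\eta = \id_L$, $R\epsilon \circ \eta R = \id_R$.

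Next, to show $A' = LQ\eta$ is an oplax comonad structure on $L \colon (\categ C, Q) \to (\categ D, LQR)$, I would check the two axioms of an oplax comonad functor as listed just above the definition of $\Comonads$. The counit axiom requires $\epsilon L \circ LnRL \circ LQ\eta = Ln$, which follows from naturality of $n \colon Q \to \id$ applied at $\eta$ (giving $n_{RL} \circ Q\eta = \eta \circ n$) followed by the triangle identity. The comultiplication axiom is the equality, as maps $LQ \to LQRLQRL$, of
\[
LQ \xrightarrow{LQ\eta} LQRL \xrightarrow{LwRL} LQQRL \xrightarrow{LQ\eta QRL} LQRLQRL
\]
and
\[
LQ \xrightarrow{Lw} LQQ \xrightarrow{LQ\eta Q} LQRLQ \xrightarrow{LQRLQ\eta} LQRLQRL ,
\]
which, after using naturality of $w$ at $\eta$ to reduce the first, becomes the statement that inserting $\eta$ at two independent positions of $LQQ$ produces the same composite either way — i.e., the "interchange" of two parallel naturality squares.

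For $A'' = O\epsilon \circ AR$ on $\id_{\categ D} \colon (\categ D, LQR) \to (\categ D, O)$, the counit axiom unwinds by naturality of the counit $n_O$ of $O$ against $\epsilon$, reducing it to the counit axiom $n_O L \circ A = Ln$ of $(L,A)$. The comultiplication axiom is the main technical check: starting from the long composite $w_O \circ O\epsilon \circ AR$ on one side and expanding $w_O$ via naturality at $\epsilon$, the comultiplication axiom for $A$ substitutes in $OA \circ AQ \circ Lw$, and then naturality of $A$ at $\eta_{QR} \colon QR \to RLQR$ combined with the triangle identity $\epsilon L \circ L\eta = \id$ contracts the middle of the other side to the same expression. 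The hardest step is keeping track of variances in this chain of seven factors, but each simplification is a routine naturality or triangle-identity argument.

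Finally, the composite of $(L, A')$ and $(\id, A'')$ in $\Comonads$ has underlying functor $L$ and oplax structure $A'' L \circ A' = O\epsilon L \circ ARL \circ LQ\eta$. By naturality of $A$ at $\eta$, $ARL \circ LQ\eta = OL\eta \circ A$, and then $O(\epsilon L \circ L\eta) = \id_{OL}$ by the triangle identity, so the composite collapses exactly to $A$. I expect the main obstacle to be the comultiplication axiom for $A''$, as it involves the greatest number of interleaved naturality squares; the other verifications are direct.
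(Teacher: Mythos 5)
Your proposal is correct and follows the same route as the paper: you use exactly the comonad structure maps the paper gives (counit $LQR \to LR \to \id$ and comultiplication $LQR \to LQQR \to LQRLQR$, the latter appearing in the paper with an evident typo as $LQRLQL$), and the paper then declares the verification a straightforward check, which is precisely the sequence of naturality/interchange and triangle-identity arguments you carry out. Your computation of the composite oplax structure $A''L \circ A' = A$ and the comultiplication axiom for $A''$ are both sound.
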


\begin{proof}
 The structure of a comonad on $LQR$ is given by the maps
 \begin{align*}
     &LQR \to LQQR \to LQRLQL;
     \\
     & LQR \to LR \to \id .
 \end{align*}
 Proving that these maps do define a comonad and the rest of the proposition follow from a straightforward checking.
\end{proof}

\subsection{More on comonad functors}

This subsection deals with the subset of oplax comonad morphisms spanned by strong morphisms.

\begin{definition}
A strong comonad functor between categories with comonads
is an oplax comonad functor $(F,A) : (\categ C, Q) \to (\categ D, R)$ so that $A$ is a natural isomorphism.
\end{definition}

\begin{remark}
 By the result of Kelly on doctrinal adjunctions, for any adjunction in the 2-category $\Comonads$, the right adjoint is a strong comonad functor.
\end{remark}

\begin{proposition}
An oplax comonad functor $(F,A) : (\categ C, Q) \to (\categ D, R)$ is a comonad functor if and only if the induced natural transformation
$$
    F_{cog} L^Q \to L^R F
$$
is an isomorphism.
\end{proposition}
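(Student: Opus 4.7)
The plan is to reduce the biconditional to the elementary fact that the comonadic forgetful functor $U_R : \Cog_{\categ D}(R) \to \categ D$ reflects isomorphisms, by identifying $A$ with the image under $U_R$ of the natural transformation $F_{cog} L^Q \to L^R F$. First I would unpack the definition of $F_{cog}L^Q \to L^R F$ exactly as in the proof of Proposition \ref{prop : naturcomonad}: since $F U_Q = U_R F_{cog}$, this natural transformation is the mate of $F U_Q L^Q \xrightarrow{F \epsilon^Q} F$ under the adjunction $U_R \dashv L^R$, where $\epsilon^Q : Q \to \id$ is the counit of the comonad $Q$. Explicitly, it is the composite
\[
F_{cog} L^Q \xrightarrow{\eta^R F_{cog} L^Q} L^R U_R F_{cog} L^Q = L^R F U_Q L^Q \xrightarrow{L^R F \epsilon^Q} L^R F,
\]
where $\eta^R$ is the unit of $U_R \dashv L^R$.

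Next I would verify that $U_R$ applied to this natural transformation equals $A$. Applying $U_R$ gives $FQ \to R F Q \to R F$, where the first map is $U_R \eta^R$ evaluated at $F_{cog}L^Q(X)$, i.e.\ the underlying $R$-coalgebra structure of $F_{cog}L^Q(X)$. By the very construction in Proposition \ref{prop : naturcomonad}, this structure is $A_{Q(X)} \circ F(w_X) : FQ(X) \to RFQ(X)$, where $w$ is the comultiplication of $Q$. Composing with $RF\epsilon^Q$ and using naturality of $A$ together with the counit axiom $Q(\epsilon^Q) \circ w = \id_Q$ yields
\[
RF(\epsilon^Q_X) \circ A_{QX} \circ F(w_X) = A_X \circ FQ(\epsilon^Q_X) \circ F(w_X) = A_X,
\]
so $U_R$ indeed sends $F_{cog}L^Q \to L^R F$ to $A$.

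Finally, one direction is immediate: if $F_{cog}L^Q \to L^R F$ is a natural isomorphism, then applying $U_R$ shows that $A$ is an isomorphism, hence $(F,A)$ is a comonad functor. For the converse, I invoke the standard observation that $U_R$ reflects isomorphisms: given a morphism $f : V \to W$ of $R$-coalgebras whose underlying map is invertible with inverse $g$, the identity $R(g) \circ \rho_W = \rho_V \circ g$ is obtained by precomposing the coalgebra-morphism condition for $f$ with $g$ on both sides, so $g$ upgrades to a coalgebra morphism. Thus if $A$ is a natural isomorphism, the underlying natural transformation of $F_{cog}L^Q \to L^R F$ is invertible objectwise, and each inverse lifts to a coalgebra morphism, proving that $F_{cog}L^Q \to L^R F$ is an isomorphism. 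There is no real obstacle here beyond the bookkeeping in the second paragraph; the content of the statement is essentially that $U_R$ is conservative and that the two natural transformations are adjoint mates.
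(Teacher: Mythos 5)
Your proof is correct and takes essentially the same route as the paper's, which simply observes that $A$ is obtained from $F_{cog}L^Q \to L^R F$ by applying $U_R$ (as set up in Proposition \ref{prop : naturcomonad}) and implicitly uses that $U_R$ is conservative. You have merely spelled out the mate computation and the conservativity of $U_R$ that the paper leaves tacit.
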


\begin{proof}
This is a direct consequence of the way the morphisms
$F_{cog} L^Q \to L^R F$ and $FQ \to RF$ are related in the proof of Proposition \ref{prop : naturcomonad}.
\end{proof}

Let us consider an oplax comonad functor
$$
    (F,A) : (\categ C, Q) \to (\categ D,R) .
$$
We suppose that the categories $\categ C$ and $\categ D$
are complete and that the comonads $Q,R$
preserve coreflexive equalisers. Hence, the categories of coalgebras
over these comonads are complete (see Appendix \ref{appendixcomonads}).

\begin{proposition}\label{prop : Fcog preserves (co)limits}
Suppose that $F$ preserves limits and that $A$ is a natural isomorphism (hence, $(F,A)$ is a strong comonad functor). Then, $F_{cog}$ preserves limits.
\end{proposition}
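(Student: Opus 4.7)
The strategy is to present every limit in $\catcog{\categ C}{Q}$ as a coreflexive equalizer between cofree coalgebras, transport that description through $F_{cog}$ using the natural isomorphism $F_{cog} L^Q \cong L^R F$ (which holds by the previous proposition, since $(F,A)$ is a comonad functor), and conclude via the conservativity of $U_R$.

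The first step is the cobar presentation. For every $Q$-coalgebra $V$, the coaction $\rho_V : V \to L^Q U_Q V$ equalizes the parallel coalgebra maps $\eta_{L^Q U_Q V}$ and $L^Q U_Q \eta_V$ from $L^Q U_Q V$ to $L^Q U_Q L^Q U_Q V$; on underlying objects these are $\delta_V$ and $Q\rho_V$, and the counit $n$ of $Q$ provides splittings making the underlying diagram $V \to QV \rightrightarrows Q^2V$ a split equalizer, hence absolute. Applying this naturally to $D$ and using that $L^Q$ preserves limits (being a right adjoint), the standard interchange of limits yields
\[
\ell := \lim D \;\cong\; \mathrm{eq}_{\catcog{\categ C}{Q}}\!\bigl(L^Q X_1 \rightrightarrows L^Q X_2\bigr),
\]
where $X_1 := \lim_I U_Q V_i$ and $X_2 := \lim_I Q U_Q V_i$ in $\categ C$ (both exist since $\categ C$ is complete).

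The same cobar recipe applied to $F_{cog} D$ in $\catcog{\categ D}{R}$, combined with the hypothesis that $F$ preserves limits and with $A : FQ \xrightarrow{\sim} RF$, gives the underlying limits
\[
\lim_I U_R F_{cog} V_i = \lim_I F U_Q V_i = F X_1, \qquad \lim_I R U_R F_{cog} V_i \cong \lim_I F Q U_Q V_i = F X_2,
\]
and hence
\[
M := \lim F_{cog} D \;\cong\; \mathrm{eq}_{\catcog{\categ D}{R}}\!\bigl(L^R F X_1 \rightrightarrows L^R F X_2\bigr).
\]
Applying $F_{cog}$ to the equalizer presentation of $\ell$ and using $F_{cog} L^Q \cong L^R F$ produces a canonical comparison map $F_{cog}(\ell) \to M$ over the same parallel pair. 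To verify it is an isomorphism, I pass to the base categories via $U_R$: since $Q$ and $R$ preserve coreflexive equalizers, $U_Q$ and $U_R$ create them (the dual of the standard fact for monadic functors), so $U_Q(\ell) = \mathrm{eq}(Q X_1 \rightrightarrows Q X_2)$ and $U_R(M) = \mathrm{eq}(R F X_1 \rightrightarrows R F X_2)$. Combining $U_R F_{cog} = F U_Q$ with the preservation of equalizers by $F$,
\[
U_R F_{cog}(\ell) = F\,\mathrm{eq}(Q X_1 \rightrightarrows Q X_2) = \mathrm{eq}(F Q X_1 \rightrightarrows F Q X_2) \cong \mathrm{eq}(R F X_1 \rightrightarrows R F X_2) = U_R M,
\]
the middle isomorphism coming from $A$. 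Since $U_R$ is comonadic it is conservative, so the comparison is already an isomorphism in $\catcog{\categ D}{R}$.

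The main delicate point will be the first step: one must check that the cobar resolution is genuinely natural in the diagram index and that the interchange of the $I$-limit with the parallel pair is compatible with the coalgebra structures, so as to identify $\ell$ with the equalizer in $\catcog{\categ C}{Q}$ on the nose (not merely up to a non-canonical isomorphism). Once this naturality is pinned down, the remainder of the argument is bookkeeping with the adjunctions $U_\bullet \dashv L^\bullet$ and the invertibility of the oplax structure $A$.
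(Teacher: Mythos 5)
Your proof is correct, and it runs on the same engine as the paper's --- the cofree (cobar) presentation of a coalgebra as a coreflexive equaliser of cofree coalgebras, the isomorphism $F_{cog}L^Q \simeq L^R F$ coming from $A$ being invertible, the hypothesis that $F$ preserves limits, and the fact that $U_Q$, $U_R$ handle coreflexive equalisers --- but you package it differently. The paper first disposes of coreflexive equalisers directly (they are created by $U_R$ and preserved by $F\circ U_Q$), then treats \emph{products} via the cofree presentation, and finally invokes the generation of all limits by products and coreflexive equalisers. You instead apply the cofree resolution uniformly to an arbitrary diagram $D$ and commute the $I$-indexed limit past the coreflexive pair, which lets you handle all limits in one pass. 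What this buys you is that you never need the ``products $+$ coreflexive equalisers $\Rightarrow$ all limits'' reduction; the price is exactly the two points you flag yourself: (i) the Fubini interchange, which here also serves to \emph{establish} the existence of $\lim D$ as the equaliser $\mathrm{eq}(L^Q X_1 \rightrightarrows L^Q X_2)$ (note the pair stays coreflexive after taking $\lim_I$, with common retraction induced by $L^Q$ applied to the counit, so the equaliser exists in $\Cog_{\categ C}(Q)$ by the appendix), and (ii) the check that the isomorphism $F_{cog}L^Q \simeq L^R F$ intertwines the two parallel pairs, so that your comparison map really lands over the pair defining $M$. Both are routine given that $(F,A)$ is a morphism in $\Comonads$ and that the unit of $U_Q \dashv L^Q$ is natural, and your final descent along the conservative functor $U_R$ is sound since $U_R$ preserves the coreflexive equaliser defining $M$. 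So the argument stands; it is a mild generalisation of the paper's product step rather than a divergence from it.
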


\begin{proof}
Since $U_R \circ F_{cog} = F \circ U_Q$, $U_Q,F$ preserve coreflexive equalisers and $U_R$ create coreflexive equalisers, then $F_{cog}$ also preserve coreflexive equalisers.

Let us consider a family of $Q$-coalgebras $(V_i)_{i \in I}$ and the following diagram
$$
\begin{tikzcd}
     F_{cog}(\prod_i V_i)
     \ar[r] \ar[d]
     & \prod_i F_{cog}(V_i)
     \ar[d]
     \\
     F_{cog}L^Q (\prod_i U_Q(V_i))
     \ar[d, shift left] \ar[d, shift right]
     \ar[r]
     & L^R(\prod_i U_R F_{cog}(V_i))
     \ar[d, shift left] \ar[d, shift right]
     \\
     F_{cog}L^Q (\prod_i QU_Q(V_i))
     \ar[r]
     & L^R(\prod_i R U_R F_{cog}(V_i))
\end{tikzcd}
$$
which represents a natural transformation between the two vertical subdiagrams. The middle horizontal arrow and the bottom horizontal arrow decompose respectively as
\begin{align*}
     & F_{cog}L^Q (\prod_i U_Q(V_i))
     \to L^R F(\prod_i U_Q(V_i))
     \to L^R (\prod_i F U_Q(V_i))
     = L^R(\prod_i U_R F_{cog}(V_i)) ;
     \\
     &F_{cog}L^Q (\prod_i QU_Q(V_i))
     \to L^R F(\prod_i Q U_Q(V_i))
     \to L^R (\prod_i F Q U_Q(V_i))
     \to L^R (\prod_i R U_R F_{cog}(V_i)) ;
\end{align*}
all these maps are isomorphisms since $F$ preserves products and the morphism $FQ \to RF$ is an isomorphism. Hence, the middle horizontal arrow and the bottom horizontal arrow are also isomorphisms. Since the two vertical subdiagrams are limiting, then the top horizontal arrow is also an isomorphism.

To conclude, $F_{cog}$ preserves coreflexive equalisers and products and hence preserves all limits.
\end{proof}

\begin{corollary}
 Let us consider a lax monad functor between categories with monads $(G,B) : (\categ C, M) \to (\categ D, N)$ where $\categ C$ and $\categ D$ are cocomplete and $M$ and $N$ preserve reflexive coequalisers. If $F$ preserves colimits and $B$ is a natural isomorphism, then the induced functor between algebras $F_{alg}$ preserves colimits.
\end{corollary}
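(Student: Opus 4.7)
The plan is to reduce this corollary to Proposition \ref{prop : Fcog preserves (co)limits} by duality, exploiting the canonical isomorphism of monoidal contexts $\Monads \simeq \Comonads^{co}$ established earlier. Concretely, I would transfer the lax monad functor $(G,B) : (\categ C, M) \to (\categ D, N)$ through this isomorphism to an oplax comonad functor $(G^{\op}, B^{\op}) : (\categ C^{\op}, M^{\op}) \to (\categ D^{\op}, N^{\op})$, where $M^{\op}$ and $N^{\op}$ denote the comonads on the opposite categories induced by $M$ and $N$.

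Next, I would check that each of the hypotheses translates correctly: cocompleteness of $\categ C$ and $\categ D$ becomes completeness of $\categ C^{\op}$ and $\categ D^{\op}$; preservation of reflexive coequalisers by $M$ and $N$ becomes preservation of coreflexive equalisers by $M^{\op}$ and $N^{\op}$; the hypothesis that $G$ preserves colimits becomes the hypothesis that $G^{\op}$ preserves limits; and $B$ being a natural isomorphism is preserved under taking opposites. Proposition \ref{prop : Fcog preserves (co)limits} then applies and yields that $(G^{\op})_{cog} : \catcog{\categ C^{\op}}{M^{\op}} \to \catcog{\categ D^{\op}}{N^{\op}}$ preserves limits.

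To conclude, I would use the canonical identification $\catcog{\categ C^{\op}}{M^{\op}} \simeq (\catalg{\categ C}{M})^{\op}$, which sends $(G^{\op})_{cog}$ to $(G_{alg})^{\op}$. Since a functor preserves colimits if and only if its opposite preserves limits, this gives precisely that $G_{alg}$ preserves colimits.

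The only mildly delicate point will be checking that the 2-isomorphism $\Monads \simeq \Comonads^{co}$ intertwines the algebra-lifting construction of Corollary \ref{prop : naturmonad} with the coalgebra-lifting construction of Proposition \ref{prop : naturcomonad} — that is, that under the opposite-category equivalence, lifting $G$ to $G_{alg}$ on $M$-algebras corresponds to lifting $G^{\op}$ to $(G^{\op})_{cog}$ on $M^{\op}$-coalgebras. This is essentially formal from the construction of the isomorphism $\Monads \simeq \Comonads^{co}$ and from the fact that an $M$-algebra structure on $X \in \categ C$ is literally the same data as an $M^{\op}$-coalgebra structure on $X \in \categ C^{\op}$, so the verification amounts to unfolding definitions rather than any real computation.
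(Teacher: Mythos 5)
Your duality argument is correct and is exactly the intended proof: the paper states this as an immediate corollary of Proposition \ref{prop : Fcog preserves (co)limits} via the isomorphism $\Monads \simeq \Comonads^{co}$, offering no further detail. Your careful unpacking of how the hypotheses and the lifting constructions transport across the opposite-category identification fills in precisely what the paper leaves implicit.
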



\section{Mapping coalgebras}

In this section, we use the adjoint lifting theorem to describe contexts where some categories of (co)algebras over a (co)monad are enriched tensored and cotensored over the category of coalgebras over another comonad.

\subsection{Pairing adjoints}

\subsubsection{The situation}

Let us consider a 2-pairing in the monoidal context of comonads
$$
    - \boxtimes - : (\categ C, Q) \times (\categ D, R)
    \to (\categ E, O).
$$
It is given by a bifunctor $ - \boxtimes - : \categ C \times \categ D \to \categ E$ together with a natural map
$Q(X) \boxtimes R(Y) \to O(X \boxtimes Y)$ that satisfy some commutation conditions with respect to the counits and decompositions of comonads. We know that such a natural map satisfying such conditions is equivalent to the data of a bifunctor
$$
    -\boxtimes_{cog}- : \catcog{C}{Q} \times \catcog{D}{R}
    \to \catcog{E}{O}
$$
that lifts $- \boxtimes -$.

Let us suppose that for any $X \in \categ C$, the functor $X \boxtimes - : \categ D \to \categ E$
has a right adjoint that is denoted
$$
    \langle -, X \rangle : \categ E \to \categ D.
$$
Then, by naturality, we obtain a bifunctor
$$
    \langle -, - \rangle: \categ E \times \categ C^{\op} \to \categ D;
$$
together with a natural isomorphism
$$
 \Hom{\categ E}{X \boxtimes Y}{Z}
 \simeq 
 \Hom{\categ D}{Y}{\langle Z, X\rangle}
$$
for any $(X,Y,Z) \in \categ C \times\categ D \times \categ E$.

\subsubsection{The adjoint}

Let $V$ be a $Q$-coalgebra. The functor $V \boxtimes_{cog} -$ from $R$-coalgebras to $O$-coalgebras lifts the functor $U_Q(V) \boxtimes -$ from $\categ D$ to $\categ E$. This corresponds to the structure of an oplax comonad functor on $U_Q(V) \boxtimes -$ with respect to $R$ and $O$
given by the map
$$
    U_Q(V) \boxtimes R(-)
    \to QU_Q(V) \boxtimes R(-)
    \to O( U_Q(V) \boxtimes -) .
$$
By doctrinal adjunction, the adjoint functor $\langle -,U_Q(V)\rangle$ inherits the structure of a lax comonad functor given by a natural map
$$
    R(\langle -,U_Q(V)\rangle) \to \langle O(-),U_Q(V)\rangle .
$$
(See Subsection \ref{sec adjiliftthm})
For any $O$-coalgebra Z,
let us consider the following two $R$-coalgebra morphisms from $L^R(\langle Z, V\rangle)$
to $L^R(O(\langle Z), V\rangle)$:
\begin{enumerate}
    \item on the one hand, the morphism induced by the structural morphism $Z \to O(Z)$;
    \item on the other hand the composite morphism
    $$
    L^R(\langle Z, V\rangle)
    \to L^RR(\langle Z, V\rangle)
    \to L^R(\langle O(Z), V\rangle).
    $$
\end{enumerate}
They also share a left inverse induced by the counit map $O(Z) \to Z$. We thus obtain a coreflexive pair of morphisms
\begin{equation}\label{eq1}
    L^R(\langle Z, V\rangle)
    \rightrightarrows
    L^R(\langle O(Z), V\rangle).
\end{equation}

\begin{proposition}
 The functor
 $$
    V \boxtimes_{cog} - : \catcog{D}{R}
    \to \catcog{E}{O}
 $$
 admits a right adjoint, that
 we denote $\langle -, V\rangle_R$,
 if and only if the category of $R$-coalgebras admits limits of the diagram (\ref{eq1}) for any $O$-coalgebra $Z$.
\end{proposition}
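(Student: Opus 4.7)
The plan is to derive this proposition as a direct application of the adjoint lifting theorem (Theorem \ref{them adjliftthm}) to the oplax comonad functor $(U_Q(V) \boxtimes -, \alpha) : (\categ D, R) \to (\categ E, O)$ whose construction was spelled out in the paragraph preceding the statement. The first step is to notice that, by Proposition \ref{prop : naturcomonad}, the lifting to coalgebras of this oplax comonad functor coincides with the functor $V \boxtimes_{cog} - : \catcog{D}{R} \to \catcog{E}{O}$. This is essentially true by construction: $V \boxtimes_{cog} -$ was defined as the restriction of the bifunctor $- \boxtimes_{cog} -$ at the $Q$-coalgebra $V$, and the oplax comonad structure $\alpha$ on $U_Q(V) \boxtimes -$ was extracted from precisely that lift via the same proposition.

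Next, by hypothesis the functor $U_Q(V) \boxtimes - : \categ D \to \categ E$ admits a right adjoint $\langle -, U_Q(V)\rangle$, which by Kelly's doctrinal adjunction (recalled in Subsection \ref{sec adjiliftthm}) inherits a canonical lax comonad structure from $(\categ E, O)$ to $(\categ D, R)$. Theorem \ref{them adjliftthm} then states that the induced functor on coalgebras, namely $V \boxtimes_{cog} -$, admits a right adjoint if and only if, for every $O$-coalgebra $Z$, the coreflexive pair of morphisms
$$
L^R \langle Z, U_Q(V)\rangle \rightrightarrows L^R \langle O(Z), U_Q(V)\rangle
$$
produced by the theorem admits an equaliser in $\catcog{D}{R}$; moreover in that case the equaliser computes the value of the right adjoint $\langle -, V\rangle_R$ at $Z$.

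All that remains is to identify the coreflexive pair produced by the adjoint lifting theorem with the pair displayed in diagram (\ref{eq1}). Both parallel arrows are assembled from the structural morphism $Z \to O(Z)$: one by directly applying the functor $L^R \langle -, U_Q(V)\rangle$, and the other by first inserting a copy of $R$ via the cofree comonad structure and then composing with the transferred lax structure $R\langle -, U_Q(V)\rangle \to \langle O(-), U_Q(V)\rangle$ whose explicit formula was recalled just before the proposition. The common left inverse comes in both presentations from the counit $O(Z) \to Z$. This matching is the only genuine piece of bookkeeping in the argument and it is the main, if minor, obstacle; once it is performed the proposition reduces to Theorem \ref{them adjliftthm}.
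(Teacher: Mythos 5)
Your proposal is correct and follows exactly the paper's route: the paper's entire proof is the one-line remark that this is an application of the adjoint lifting theorem (Theorem \ref{them adjliftthm}), and your write-up simply spells out that application, correctly identifying the oplax comonad functor $U_Q(V)\boxtimes -$, its doctrinally adjoint lax structure on $\langle -, U_Q(V)\rangle$, and the coreflexive pair of the theorem with the pair displayed in diagram (\ref{eq1}). The bookkeeping you describe is precisely the content of the paragraphs preceding the proposition, so nothing is missing.
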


\begin{proof}
 This is an application of the adjoint lifting theorem (Theorem \ref{them adjliftthm}).
\end{proof}

\begin{corollary}
  If such an adjoint $\langle -, V\rangle_R$ exists
  for any $Q$-coalgebra $V$, then it yields a bifunctor
  $$
\langle - , - \rangle_R : \catcog{E}{O} \times \catcog{C}{Q}^{\op} \to \catcog{D}{R} .
  $$
  and a natural isomorphism
  $$
  \Hom{\catcog{E}{O}}{V \boxtimes_{cog} W}{ Z}
  \simeq \Hom{\catcog{D}{R}}{W}{\langle Z, V \rangle_R } .
  $$
\end{corollary}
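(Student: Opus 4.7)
The plan is to apply the standard theory of parametrized right adjoints (mate correspondence) to the family of pointwise adjunctions $V \boxtimes_{cog} - \dashv \langle -, V\rangle_R$ indexed by $V \in \catcog{C}{Q}$.

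First, for each morphism $f \colon V \to V'$ in $\catcog{C}{Q}$, the functoriality of $- \boxtimes_{cog} -$ in its first slot yields a natural transformation of left adjoints $f \boxtimes_{cog} - \colon V \boxtimes_{cog} - \to V' \boxtimes_{cog} -$. Taking its mate with respect to the two pointwise adjunctions produces a natural transformation $\langle -, V'\rangle_R \to \langle -, V\rangle_R$, whose component at $Z$ is the composite
\[
\langle Z, V'\rangle_R \to \langle V \boxtimes_{cog} \langle Z, V'\rangle_R, V\rangle_R \to \langle V' \boxtimes_{cog} \langle Z, V'\rangle_R, V\rangle_R \to \langle Z, V\rangle_R
\]
built out of unit, the morphism $f$, and counit. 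Because mates of composites are composites of mates and mates of identities are identities, these assemble into a strict functor $V \mapsto \langle -, V\rangle_R$ from $\catcog{C}{Q}^{\op}$ to the category of functors $\catcog{E}{O} \to \catcog{D}{R}$.

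Combining this with the already-existing functoriality of each $\langle -, V\rangle_R$ in its first variable, and observing that the two functorialities commute by naturality of the mate construction in all free variables, yields the desired bifunctor
\[
\langle -, - \rangle_R \colon \catcog{E}{O} \times \catcog{C}{Q}^{\op} \to \catcog{D}{R} .
\]
The natural isomorphism
\[
\Hom{\catcog{E}{O}}{V \boxtimes_{cog} W}{Z} \simeq \Hom{\catcog{D}{R}}{W}{\langle Z, V\rangle_R}
\]
is nothing but the given pointwise adjunction for fixed $V$. Naturality in $W$ and $Z$ is tautological, and naturality in $V$ is precisely the defining property of the mate, verified by a short chase with the triangle identities.

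There is no substantial obstacle: the argument reduces entirely to standard manipulations with mates. The only care required is bookkeeping, to ensure that one obtains a strict bifunctor rather than merely a pseudofunctor, which holds here because the pointwise adjunctions are chosen as strict data.
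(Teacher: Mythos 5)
Your proposal is correct: it is exactly the classical ``adjunction with a parameter'' argument (functoriality in $V$ via mates of $f \boxtimes_{cog} -$, compatibility of the two functorialities by naturality of the mate construction), and the paper itself offers no proof of this corollary precisely because it is this standard fact. Your explicit formula for the mate and the remark that one must fix a choice of right adjoint and unit/counit for each $V$ supply the only details the paper leaves implicit.
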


\begin{corollary}
 If the category $\categ D$ admits coreflexive equalisers and if they are preserved by $R$, then the functor $V \boxtimes_{cog} -$ has a right adjoint for any $Q$-coalgebra $V$.
\end{corollary}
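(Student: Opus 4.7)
The plan is to reduce this to the preceding proposition, which says that the right adjoint to $V \boxtimes_{cog} -$ exists precisely when the category $\catcog{D}{R}$ admits the equaliser of the coreflexive pair displayed in diagram (\ref{eq1}) for every $O$-coalgebra $Z$. So my task reduces to exhibiting coreflexive equalisers in $\catcog{D}{R}$ under the stated hypotheses on $\categ D$ and $R$.

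Next, I would invoke the standard fact (dual to the classical result that the forgetful functor from algebras over a monad creates reflexive coequalisers when the monad preserves them) that when $\categ D$ has coreflexive equalisers and $R$ preserves them, the forgetful functor $U_R : \catcog{D}{R} \to \categ D$ creates coreflexive equalisers. This is the content of the appendix referenced in the proof of Proposition \ref{prop : Fcog preserves (co)limits} of the paper. In particular, $\catcog{D}{R}$ admits coreflexive equalisers.

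Since the parallel pair in (\ref{eq1}) is by construction a coreflexive pair in $\catcog{D}{R}$ (its common left inverse is induced by the counit $O(Z) \to Z$), its equaliser exists in $\catcog{D}{R}$. Applying the preceding proposition with $Z$ arbitrary, the functor $V \boxtimes_{cog} -$ admits a right adjoint for every $Q$-coalgebra $V$, which is precisely the claim.

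The argument is essentially bookkeeping: the only nontrivial input is the creation of coreflexive equalisers by $U_R$, which has already been used elsewhere in the paper, so there is no real obstacle beyond citing it correctly.
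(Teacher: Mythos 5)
Your argument is correct and is exactly the one the paper intends (the corollary is stated without proof, but it follows from the preceding proposition together with Corollary~\ref{prop: cog coreflex} of the appendix, which is precisely the creation/existence of coreflexive equalisers in $\catcog{D}{R}$ that you invoke). The identification of the pair in diagram~(\ref{eq1}) as coreflexive, with common retraction induced by the counit $O(Z)\to Z$, is the only point that needs checking, and you state it correctly.
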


\begin{remark}
If the functor $- \boxtimes Y$ has a right adjoint
$[Y,-]$ for any $Y \in \categ D$, one gets the same phenomenon, that is the functor $- \boxtimes_{cog} W$ has a right adjoint if and only if equalisers of pairs of maps of the form
$$
    L^Q([W,Z]) \rightrightarrows L^Q([W,O(Z)])
$$
exist in $Q$-coalgebras. This just follows from considering the composite pairing
$$
    (\categ D,R) \times (\categ C, Q) \simeq 
    (\categ C, Q) \times (\categ D, R)
    \xrightarrow{-\boxtimes -} (\categ E,O)
$$
and applying the same results.
\end{remark}

\subsection{Enrichment}

Let $\categ C$ be a monoidal category and let $\categ D$ be a category enriched, tensored and cotensored over $\categ C$. Let us suppose that $\categ C$ have all coreflexive equalisers and that $\categ D$ have all coreflexive equalisers and all reflexive coequalisers.

Let us consider a Hopf comonad $Q$ on $\categ C$ that preserves coreflexive equalisers, a comonad $R$ on $\categ D$ that preserves coreflexive equalisers
and a monad $M$ on $\categ D$ that preserves reflexive coequalisers.

\begin{theorem}
 Given a tensorisation
 $$
 \catcog{C}{Q} \times \catcog{D}{R} \to \catcog{D}{R}
 $$
that lifts that of $\categ C$ on $\categ D$,
then $R$-coalgebras are enriched, tensored and cotensored over $Q$-coalgebras.
\end{theorem}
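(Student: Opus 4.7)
The plan is to view the given tensorisation as a 2-pairing $- \boxtimes - : (\categ C, Q) \times (\categ D, R) \to (\categ D, R)$ in the monoidal context $\Comonads$ (via Theorem \ref{main}), and to manufacture the internal hom and the cotensor as right adjoints by two applications of the pairing-adjoints results of Section 3.1.

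For the cotensorisation, fix $V \in \catcog{C}{Q}$. At the base level, the functor $U_Q(V) \otimes - : \categ D \to \categ D$ admits a right adjoint coming from the assumed cotensorisation of $\categ D$ over $\categ C$. By the corollary to the pairing-adjoints proposition, the lifted functor $V \boxtimes_{cog} - : \catcog{D}{R} \to \catcog{D}{R}$ admits a right adjoint $\langle -, V\rangle_R$ since $\categ D$ has coreflexive equalisers preserved by $R$. Naturality in $V$ assembles these adjoints into the desired bifunctor $\catcog{D}{R} \times \catcog{C}{Q}^{\op} \to \catcog{D}{R}$.

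For the enrichment, fix $W \in \catcog{D}{R}$. At the base level, $- \otimes U_R(W) : \categ C \to \categ D$ has a right adjoint $[U_R(W), -]$ coming from the enrichment of $\categ D$ over $\categ C$. Applying the final remark of Section 3.1 (which swaps the two arguments of $- \boxtimes -$), the lifted functor $- \boxtimes_{cog} W : \catcog{C}{Q} \to \catcog{D}{R}$ admits a right adjoint $\{W, -\}_R : \catcog{D}{R} \to \catcog{C}{Q}$ provided the relevant coreflexive equalisers, of the shape $L^Q([W, Z]) \rightrightarrows L^Q([W, R(Z)])$, exist inside $\catcog{C}{Q}$. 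Since $\categ C$ has coreflexive equalisers and $Q$ preserves them, $U_Q$ creates such equalisers, so $\catcog{C}{Q}$ has them; the hypothesis is therefore satisfied. Naturality in $W$ again yields a bifunctor $\catcog{D}{R}^{\op} \times \catcog{D}{R} \to \catcog{C}{Q}$.

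Finally, the three bifunctors $- \boxtimes_{cog} -$, $\langle -,-\rangle_R$ and $\{-,-\}_R$ sit in natural adjunction isomorphisms lifting the tensor/cotensor/enrichment adjunctions of $\categ D$ over $\categ C$. The coherence axioms (associator and unitor compatibilities of the enrichment, and compatibility with the monoidal structure on $\catcog{C}{Q}$ induced by the Hopf structure on $Q$) can be checked after applying the faithful conservative functors $U_Q$ and $U_R$, where they reduce to the already-known base-level coherences, so they hold automatically. The main technical hurdle is the second step: unwinding the adjoint lifting theorem to identify the precise coreflexive pair whose equaliser produces $\{W, -\}_R$, and verifying that it lies entirely within $\catcog{C}{Q}$; this is exactly where the hypothesis that $Q$ preserves coreflexive equalisers is used.
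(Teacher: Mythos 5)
Your argument is correct and is exactly the proof the paper intends but leaves implicit (no proof is printed for this theorem): view the lifted tensorisation as a 2-pairing with target $(\categ D, R)$, obtain the cotensor $\langle -, V\rangle_R$ from the corollary on coreflexive equalisers in $\categ D$ preserved by $R$, obtain the enrichment $\{W,-\}$ from the remark on the transposed pairing together with the fact that $\catcog{C}{Q}$ has coreflexive equalisers created by $U_Q$, and conclude via the equivalence of tensorisation, cotensorisation and enrichment structures from the appendix. No gaps.
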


\begin{theorem}
 Given a cotensorisation
 $$
 \catalg{D}{M} \times \catcog{C}{Q}^{\op} \to \catcog{D}{M}
 $$
that lifts that of $\categ C$ on $\categ D$, then $M$-algebras are enriched, tensored and cotensored over $Q$-coalgebras.
\end{theorem}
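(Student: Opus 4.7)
My plan is to deduce this from the previous theorem by passing to opposite categories, exploiting the canonical isomorphism $\Monads \simeq \Comonads^{co}$ together with the observation that cotensorisations of $\categ{D}$ over $\categ{C}$ are tensorisations of $\categ{D}^{\op}$ over $\categ{C}$.

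First I would equip $\categ{D}^{\op}$ with its induced structure of a category enriched, tensored and cotensored over $\categ{C}$, with the new tensoring given by $X \bullet A^{\op} \coloneqq \langle A, X\rangle^{\op}$. The monad $M$ on $\categ{D}$ dualizes to a comonad $M^{\op}$ on $\categ{D}^{\op}$, and there is a canonical isomorphism of categories $\catalg{D}{M}^{\op} \simeq \Cog_{\categ{D}^{\op}}(M^{\op})$. The hypotheses then translate correctly: $\categ{D}^{\op}$ has coreflexive equalisers and reflexive coequalisers (coming from the reflexive coequalisers and coreflexive equalisers of $\categ{D}$), and $M^{\op}$ preserves coreflexive equalisers in $\categ{D}^{\op}$ precisely because $M$ preserves reflexive coequalisers in $\categ{D}$.

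Under this dualization, the given lifted cotensorisation
$$
\langle -, -\rangle \colon \catalg{D}{M} \times \catcog{C}{Q}^{\op} \to \catalg{D}{M}
$$
transposes into a lifted tensorisation
$$
\bullet \colon \catcog{C}{Q} \times \Cog_{\categ{D}^{\op}}(M^{\op}) \to \Cog_{\categ{D}^{\op}}(M^{\op})
$$
that lifts the tensorisation of $\categ{C}$ on $\categ{D}^{\op}$ described above. The previous theorem applied with $(\categ{D}^{\op}, M^{\op})$ in place of $(\categ{D}, R)$ then yields that $\Cog_{\categ{D}^{\op}}(M^{\op})$ is enriched, tensored and cotensored over $\catcog{C}{Q}$. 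Undoing the opposite and observing that passing to opposite categories swaps tensoring with cotensoring while preserving enrichment, this produces the desired enriched, tensored and cotensored structure on $\catalg{D}{M}$ over $\catcog{C}{Q}$.

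The main bookkeeping step will be verifying that the above dualization genuinely transforms the lifted cotensorisation on the algebra side into a lifted tensorisation on the coalgebra side of the asserted form. This reduces to unwinding the correspondence from the "Consequences" subsection identifying lifted (co)tensorisations with Hopf module structures on $M$, a correspondence that is self-dual under the combined dualities of opposite category and $\Monads \simeq \Comonads^{co}$; once this is checked, the rest is a direct application of the tensorisation version of the theorem.
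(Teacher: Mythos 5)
The paper states this theorem without any proof (both enrichment theorems in that subsection are left unproved, as immediate consequences of the preceding ``Pairing adjoints'' machinery), so there is no written argument to compare against; but your proposal is correct and entirely in the spirit of how the paper handles monad/comonad duality elsewhere (the isomorphism $\Monads \simeq \Comonads^{co}$ and the various corollaries deduced from it). The evident direct alternative, which the surrounding text suggests, is to apply the adjoint lifting theorem twice without dualizing: once in its monad form to produce the left adjoint $V \boxtimes -$ of $\langle -, V\rangle$ (using that $\catalg{D}{M}$ has reflexive coequalisers because $\categ D$ does and $M$ preserves them), and once to produce the enrichment $\{-,-\}$ valued in $\catcog{C}{Q}$ (using coreflexive equalisers there). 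Your reduction to the first theorem via $\catalg{D}{M}^{\op} \simeq \Cog_{\categ D^{\op}}(M^{\op})$ buys you economy --- you reuse the first theorem verbatim rather than rerunning the argument --- at the cost of the bookkeeping you already flag, to which I would add one small item: a cotensorisation is a lax \emph{right} $\categ C$-module structure on $\categ D^{\op}$, so unless $\categ C$ is symmetric you should pass through $\categ C^{tr}$ (on which $Q$ is still a Hopf comonad) when invoking the first theorem, and then observe that the final $\op$/$tr$ twists cancel so that the enrichment lands in $\catcog{C}{Q}$ with its given monoidal structure. With that noted, the argument goes through.
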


\subsection{Pairing transfer}

Let us consider two 2-pairings in the monoidal context of comonads
\begin{align*}
   - \boxtimes - : (\categ C, Q) \times (\categ D, R)
    &\to (\categ E, O)
    \\
   - \boxtimes' - : (\categ C', Q') \times (\categ D', R')
    &\to (\categ E', O')
\end{align*}
together with a lax morphism of pairings, that is the data of functors
\begin{align*}
    F_C &: \categ C \to \categ C' ;
    \\
    F_D &: \categ D \to \categ D' ;
    \\
    F_E &: \categ E \to \categ E' ;
\end{align*}
that are lifted to the level of coalgebras by functors respectively denoted $F_{C,cog}, F_{D,cog}$ and $F_{E,cog}$
and
together with a natural morphism
$$
    F_E (X \boxtimes Y) \to
    F_C X \boxtimes' F_D Y
$$
that also lifts to the level of coalgebras. 

Let us suppose that the functors 
\begin{align*}
    & X \boxtimes - : \categ D \to \categ E ;
    \quad X' \boxtimes' - : \categ D' \to \categ E' ;
    \\
    &V \boxtimes_{cog} - : \catcog{\categ D}{R} \to \catcog{\categ E}{O};
    \quad V' \boxtimes'_{cog} - : \catcog{\categ D'}{R'} \to \catcog{\categ E'}{O'};
\end{align*}
all have right adjoints for any objects $X,X' \in \categ C \times \categ C'$, any $Q$-coalgebras $V$ and any $Q'$-coalgebra $V'$. We denote these right adjoints $\langle -, X\rangle$, $\langle -, X'\rangle'$, $\langle - , V \rangle_R$ and $\langle - , V' \rangle_{R'}$.

\begin{proposition}\label{prop transferleft}
 Let us suppose that
 \begin{itemize}
     \itemt the natural transformations $F_D R \to R' F_D$ and
     $F_E O \to O' F_E$ are isomorphisms;
     \itemt the canonical morphism
 $$
    F_D(\langle Z, X \rangle) \to
    \langle F_E(Z), F_C(X) \rangle'
 $$
 is an isomorphism for any objects $X,Z \in \categ C \times \categ E$, ;
 \itemt the image through the functor $F_{D,cog}$ of the limiting diagram
 $$
  \langle Z, V\rangle_R \to \langle L^O Z , V\rangle_R \rightrightarrows \langle L^O O Z , V\rangle_R
 $$
 is limiting for any $O$-coalgebra $Z$ and any $Q$-coalgebra $V$.
 \end{itemize}
 Then, for any two coalgebras $Z, V \in \catcog{E}{O} \times \catcog{C}{Q}$, the
 canonical morphism
 $$
    F_{D,cog} (\langle Z, V \rangle_R) \to
    \langle F_{E,cog}(Z), F_{C,cog}(V) \rangle_{R'}
 $$
 is an isomorphism.
\end{proposition}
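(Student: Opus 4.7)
The plan is to identify both sides of the canonical comparison as equalisers of coreflexive pairs, and then exhibit a natural isomorphism between the two diagrams in $\catcog{D'}{R'}$.

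First I would recall that by the adjoint lifting construction used to build $\langle -, V\rangle_R$, the object $\langle Z, V\rangle_R$ is the equaliser of a coreflexive pair
\[
L^R\bigl(\langle Z, U_Q V\rangle\bigr) \rightrightarrows L^R\bigl(\langle O(Z), U_Q V\rangle\bigr)
\]
in $\catcog{D}{R}$, and similarly $\langle F_{E,cog}(Z), F_{C,cog}(V)\rangle_{R'}$ is the equaliser of the analogous pair
\[
L^{R'}\bigl(\langle F_E Z, F_C U_Q V\rangle'\bigr) \rightrightarrows L^{R'}\bigl(\langle O'(F_E Z), F_C U_Q V\rangle'\bigr) .
\]
The third hypothesis tells us that $F_{D,cog}$ sends the first limiting diagram to a limiting diagram in $\catcog{D'}{R'}$, so it suffices to produce a natural isomorphism of coreflexive pairs from the image of the first pair to the second one, compatible with the two parallel arrows.

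Next I would produce this isomorphism object-wise. Since $F_D R \to R' F_D$ is a natural isomorphism, $(F_D, \cdot)$ is a genuine comonad functor, so by the Proposition relating comonad functors to lifts the mate $F_{D,cog} L^R \to L^{R'} F_D$ is a natural isomorphism; the same holds for $(F_E, \cdot)$. Composing with the assumed isomorphism $F_D\langle -, X\rangle \simeq \langle F_E-, F_C X\rangle'$ applied with $X = U_Q V$ (or $X = U_Q V$ against $O(Z)$), one obtains the desired identifications of the left-hand and right-hand vertices of the two coreflexive pairs.

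The key remaining point — and the one I expect to take the most care — is to check that, under these identifications, the two parallel arrows of the pair coming from $F_{D,cog}$ applied to the diagram for $\langle Z, V\rangle_R$ match the two parallel arrows of the pair defining $\langle F_{E,cog}Z, F_{C,cog}V\rangle_{R'}$. One of the arrows is induced by the structural map $Z \to O(Z)$ and its image $F_E Z \to F_E O(Z) \cong O'(F_E Z)$, so its compatibility follows from naturality of the various mate transformations together with the isomorphism $F_E O \simeq O' F_E$. The other arrow, passing through $L^R R \to L^R$ (respectively $L^{R'} R' \to L^{R'}$), is compatible with the lax comonad structure induced by doctrinal adjunction on $\langle -, U_Q V\rangle$ and $\langle -, F_C U_Q V\rangle'$; checking this amounts to tracking the doctrinal adjunction formula for the lax comonad structure and verifying that the square expressing that $F_D\langle -,-\rangle \to \langle F_E-, F_C-\rangle'$ intertwines these lax structures commutes.

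Once both parallel arrows are matched, the universal property of equalisers provides the desired isomorphism $F_{D,cog}(\langle Z, V\rangle_R) \simeq \langle F_{E,cog}(Z), F_{C,cog}(V)\rangle_{R'}$, and a direct inspection shows that this is the canonical comparison morphism.
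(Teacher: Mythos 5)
Your proposal is correct and follows essentially the same route as the paper: both reduce to identifying the value on cofree coalgebras (the vertices of the coreflexive pair), via the isomorphisms $F_{D,cog}L^R \simeq L^{R'}F_D$ coming from the comonad-functor hypothesis composed with the assumed isomorphism $F_D\langle -, X\rangle \simeq \langle F_E -, F_C X\rangle'$, and then conclude from the third hypothesis that the comparison of equalisers is an isomorphism. The only difference is presentational: the paper obtains the compatibility of the parallel arrows for free by passing the commuting diagram of lifted left adjoints through adjunction, so that the canonical comparison is a natural transformation of limiting diagrams from the start, whereas you match the two parallel arrows by hand and then identify the resulting isomorphism with the canonical one.
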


\begin{proof}
 First, let us prove the result in the case where $Z$ is cofree, that is
 $$
    Z = L^O K.
 $$
 From the lifting property of the functor $F_C,F_D,F_E$ and of the natural transformation $
F_E (X \boxtimes Y) \to
F_C X \boxtimes' F_D Y
$ we get a commutative diagram of functors
$$
\begin{tikzcd}
     U_{O'} \circ (F_{C,cog} (V)
     \boxtimes'_{cog} -) \circ F_{D,cog}
     \ar[r, equal] \ar[d]
     & (F_{C}U_Q(V) \boxtimes' -) \circ U_{R'} \circ  F_{D,cog}
     \ar[d, equal]
     \\
     U_{O'} \circ F_{E,cog} \circ
     (V \boxtimes_{cog} -)
     \ar[d, equal]
     & (F_{C}U_Q(V) \boxtimes' -)  \circ
     F_{R} 
     \circ U_{R}
     \ar[d]
     \\
     F_{E} \circ U_{O} \circ
     (V \boxtimes_{cog} -)
     \ar[r, equal]
     & F_{E}  \circ
     (U_Q(V) \boxtimes -)
     \circ U_{R} .
\end{tikzcd}
$$
By adjunction, we thus get a commutative diagram of functors
$$
\begin{tikzcd}
     F_{D,cog} \circ L^R \circ \langle - , U_Q(V)\rangle
     \ar[r] \ar[d]
     & F_{D,cog} \circ \langle - , V\rangle_R \circ L^O
     \ar[d]
     \\
     L^{R'} \circ F_{D} \circ \langle - , U_Q(V)\rangle
     \ar[d]
     & \langle - , F_{C,cog}(V)\rangle_{R'} \circ F_{E,cog} \circ L^O
     \ar[d]
     \\
     L^{R'} \circ \langle - ,F_C U_Q(V)\rangle' \circ F_{E}
     \ar[r]
     & \langle - , F_{C,cog}(V)\rangle_{R'} \circ L^{O'} \circ F_{E}
\end{tikzcd}
$$
 The horizontal arrows are isomorphisms since for any isomorphism of left adjoint functors the induced morphism of right adjoint functors is also an isomorphism. Then, by the hypothesis, the two left vertical arrows and the bottom right vertical arrow are all isomorphisms.
 Hence the map
 $$
 F_{D,cog} \circ \langle - , V\rangle_R \circ L^O
     \to
     \langle - , F_{C,cog}(V)\rangle_{R'} \circ F_{E,cog} \circ L^O
 $$
 is also an isomorphism. This proves the result for $Z = L^O (K)$.
 
 In the general case, let us consider the following diagram
 $$
 \begin{tikzcd}
      F_{D,cog} (\langle L^OO(Z), V \rangle_R)
      \ar[r]
      & \langle F_{E,cog} L^OO(Z), F_{C,cog}(V) \rangle_{R'}
      \\
      F_{D,cog} (\langle L^O(Z), V \rangle_R)
      \ar[r] \ar[u, shift left]
      \ar[u, shift right]
      & \langle F_{E,cog} L^O(Z), F_{C,cog}(V) \rangle_{R'}
      \ar[u, shift left]
      \ar[u, shift right]
      \\
      F_{D,cog} (\langle Z, V \rangle_R)
      \ar[r] \ar[u]
      & \langle F_{E,cog}(Z), F_{C,cog}(V) \rangle_{R'} .
      \ar[u]
 \end{tikzcd}
 $$
 The left vertical part is limiting as well as the left vertical part (by hypothesis). Moreover, the two first horizontal arrows are isomorphisms. Hence, the bottom horizontal map is also an isomorphism. This proves the result.
\end{proof}

\begin{remark}
The third condition of Proposition \ref{prop transferleft} is true if in particular, the categories $\categ D$ and $\categ D'$ have coreflexive equalisers that are preserved by $F_D$, $R$ and $R'$.
\end{remark}

\begin{remark}
One has the same result for right adjoints of the functors $- \boxtimes Y$ and $- \boxtimes_{cog} W$. It suffices to apply the result to the opposite pairing
$$
 Y \boxtimes^{\op} X = X \boxtimes Y .
$$
\end{remark}

\subsection{The example of chain complexes}

Let $\categ{Ch}$ be the category of chain complexes of modules over a ring $\mathbb K$ and let $\categ{Mod}_{\mathbb K}^{gr}$ be the category of $\mathbb Z$-graded $\mbk$-modules. Let us denote by $U_d$ the forgetful functor from chain complexes to graded modules. Let us notice that the categories $\categ{Ch}$ and $\categ{Mod}_{\mathbb K}^{gr}$
are complete and cocomplete and that $U_d$ preserves limits and colimits.

\begin{definition}
We call a monad $M$ (resp. a comonad $Q$) on chain complexes "computed on graded modules" if there exists a monad $M^{gr}$ (resp. a comonad $Q^{gr}$) on graded modules so that we have equalities
\begin{align*}
    U_d \circ M &= M^{gr} \circ U_d
    \\
    U_d \circ Q &= Q^{gr} \circ U_d
\end{align*}
that are consistent with respect to the structural morphisms of monads and comonads. This determines uniquely $M^{gr}$ and $Q^{gr}$.
\end{definition}

The following diagrams of categories are commutative
$$
\begin{tikzcd}
     \categ{Ch} \times \categ{Ch}
     \ar[r, "\otimes"] \ar[d]
     & \categ{Ch} \ar[d]
     \\
     \categ{Mod}_{\mathbb K}^{gr} \times \categ{Mod}_{\mathbb K}^{gr}
     \ar[r, swap, "\otimes"]
     & \categ{Mod}_{\mathbb K}^{gr} ;
\end{tikzcd}
\quad\begin{tikzcd}
     \categ{Ch}^{\op} \times \categ{Ch}
     \ar[r,"{[-,-]}"]
     \ar[d]
     & \categ{Ch}
     \ar[d]
     \\
     (\categ{Mod}_{\mathbb K}^{gr})^{\op} \times \categ{Mod}_{\mathbb K}^{gr}
     \ar[r,"{[-,-]}"]
     & \categ{Mod}_{\mathbb K}^{gr} .
\end{tikzcd}
$$
Moreover, the functors $X\otimes -$, $-\otimes X$, have the same right adjoint given by $[X,-]$ and the functor $[-,X]$ from $\categ{Ch}^{\op}$ to $\categ{Ch}$ has a left adjoint given actually by the same formula $[-,X]$.

Let us consider three comonads on chain complexes $Q,R,O$ and two monads $M,N$. We suppose that these monads and comonads are computed on graded modules, that the comonads preserve coreflexive equalisers and that the monads preserve reflexive coequalisers.

\subsubsection{Coalgebras in chain complexes}

Let us consider a bifunctor
$$
-\otimes - :\catcog{\categ{Ch}} Q \times \catcog{\categ{Ch}} R \to \catcog{\categ{Ch}} O
$$
that lifts the tensor product of chain complexes. Since these comonads are computed at the level of graded modules, the bifunctor between categories of coalgebras described above lifts another bifunctor
$$
-\otimes - :\catcog{\categ{Mod}_{\mathbb K}^{gr}}{Q^{gr}} \times \catcog{\categ{Mod}_{\mathbb K}^{gr}}{R^{gr}} \to \catcog{\categ{Mod}_{\mathbb K}^{gr}}{O^{gr}} ,
$$
that lifts itself the tensor product of graded modules.
By the adjoint lifting theorem as used in this section, we obtain four bifunctors
\begin{align*}
    &\langle -, - \rangle : \catcog{\categ{Ch}} O \times 
    \catcog{\categ{Ch}} Q^{\op}
    \to
    \catcog{\categ{Ch}} R;
    \\
    &\{ -, - \} : \catcog{\categ{Ch}} R^{\op} \times 
    \catcog{\categ{Ch}} O
    \to
    \catcog{\categ{Ch}} Q;
    \\
    &\langle -, - \rangle_{gr} : \catcog{\categ{Mod}_{\mathbb K}^{gr}}{O^{gr}}
    \times 
    \catcog{\categ{Mod}_{\mathbb K}^{gr}}{Q^{gr}}^{\op}
    \to
    \catcog{\categ{Mod}_{\mathbb K}^{gr}}{R^{gr}};
    \\
    &\{ -, - \}_{gr} : \catcog{\categ{Mod}_{\mathbb K}^{gr}}{R^{gr}}^{\op} \times 
    \catcog{\categ{Mod}_{\mathbb K}^{gr}}{R^{gr}}
    \to
    \catcog{\categ{Mod}_{\mathbb K}^{gr}} Q^{gr};
\end{align*}
together with natural isomorphisms
\begin{align*}
    &\Hom{}{W}{\langle Z, V \rangle }
    \simeq \Hom{}{V\otimes W}{Z}
    \simeq
    \Hom{}{V}{\{ W,Z \} } ;
    \\
    &\Hom{}{W'}{\langle Z', V' \rangle_{gr} }
    \simeq \Hom{}{V'\otimes W'}{Z'}
    \simeq
    \Hom{}{V'}{\{ W',Z' \}_{gr} } ;
\end{align*}
for any $Q,R,O,Q^{gr},R^{gr},O^{gr}$-coalgebras respectively $V,W,Z,V',W',Z'$.
From Proposition \ref{prop transferleft}, we get that the natural morphisms
\begin{align*}
    &U_{d,cog} (\langle Z, V\rangle) \to \langle U_{d,cog}(Z), U_{d,cog}(V)\rangle_{gr}
    \\
    &U_{d,cog}( \{ W, Z\}) \to \{ U_{d,cog}(W), U_{d,cog}(Z)\}_{gr}
\end{align*}
are isomorphisms.

\subsubsection{Algebras in chain complexes}

Let us consider a bifunctor
$$
[-,-] :\catcog{\categ{Ch}}{Q}^{\op} \times \catalg{\categ{Ch}} M \to \catalg{\categ{Ch}} N
$$
that lifts the internal hom of chain complexes. Since $M,N,Q$ are computed at the level of graded modules, the bifunctor between categories of algebras and coalgebras described above lifts another bifunctor
$$
[-,-] :\catcog{\categ{Mod}_{\mathbb K}^{gr}}{Q^{gr}}^{\op} \times \catalg{\categ{Mod}_{\mathbb K}^{gr}}{M^{gr}} \to \catcog{\categ{Mod}_{\mathbb K}^{gr}}{N^{gr}} ,
$$
that lifts itself the internal hom of graded modules.
Thus, from the adjoint lifting theorem, we obtain four bifunctors
\begin{align*}
    & -\boxtimes - : \catcog{\categ{Ch}} Q \times 
    \catcog{\categ{Ch}} N
    \to
    \catalg{\categ{Ch}} M;
    \\
    &\{ -, - \} : \catalg{\categ{Ch}} N^{\op} \times 
    \catalg{\categ{Ch}} M
    \to
    \catcog{\categ{Ch}} Q;
    \\
    & -\boxtimes_{gr} - : : \catcog{\categ{Mod}_{\mathbb K}^{gr}}{Q^{gr}}
    \times
    \catalg{\categ{Mod}_{\mathbb K}^{gr}}{N^{gr}}
    \to
    \catcog{\categ{Mod}_{\mathbb K}^{gr}}{M^{gr}};
    \\
    &\{ -, - \}_{gr} : \catalg{\categ{Mod}_{\mathbb K}^{gr}}{N^{gr}}^{\op}
    \times 
    \catalg{\categ{Mod}_{\mathbb K}^{gr}}{M^{gr}}
    \to
    \catcog{\categ{Mod}_{\mathbb K}^{gr}}{Q^{gr}};
\end{align*}
together with natural isomorphisms
\begin{align*}
    &\Hom{}{V \boxtimes B}{A}
    \simeq \Hom{}{B}{[V,A]}
    \simeq
    \Hom{}{V}{\{ B,A \} } ;
    \\
    &\Hom{}{V' \boxtimes_{gr} B'}{A'}
    \simeq \Hom{}{B'}{[V',A']}
    \simeq
    \Hom{}{V'}{\{ B',A' \}_{gr} } ;
\end{align*}
for any $Q,Q^{gr}$-coalgebras $V,V'$ and any $M, N,M^{gr}, N^{gr}$-algebras $A,B,A',B'$.
From Proposition \ref{prop transferleft}, we get that the natural morphisms
\begin{align*}
    & U_{d,cog}(V) \boxtimes_{gr} U_{d,alg}(B) \to  U_{d,alg}(V \boxtimes B)
    \\
    &U_{d,cog}( \{ B, A\}) \to \{ U_{d,alg}(B), U_{d,alg}(A)\}_{gr}
\end{align*}
are isomorphisms.


\appendix

\section{Symmetric monoidal categories}

In this first appendix, we just recall the main definitions related to symmetric monoidal categories.

\subsection{Monoidal categories}

\begin{definition}
A monoidal category is the data of a category $\categ C$ equipped with a bifunctor
 $$
 - \otimes - : \categ C \times \categ C \to \categ C
 $$
 an object $\II$ and natural isomorphisms
 $$
 X \otimes (Y \otimes Z) \simeq (X \otimes Y) \otimes Z,\quad \II \otimes X \simeq X,\quad X \otimes \II \simeq X
 $$
 called respectively the associator, the left unitor and the right unitor and so that the following diagrams commute
 $$
 \begin{tikzcd}
      ((U \otimes X ) \otimes Y ) \otimes Z
      \ar[r] \ar[d]
      & (U \otimes X ) \otimes (Y  \otimes Z)
      \ar[dd]
      \\
      (U \otimes (X  \otimes Y )) \otimes Z
      \ar[d]
      \\
      U \otimes ((X  \otimes Y ) \otimes Z)
      \ar[r]
      & U \otimes (X  \otimes (Y  \otimes Z))
 \end{tikzcd}
 \quad
 \begin{tikzcd}
      (X \otimes \II) \otimes Y
      \ar[r] \ar[rd]
      & X \otimes (\II \otimes Y)
      \ar[d]
      \\
      & X \otimes Y .
 \end{tikzcd}
 $$
\end{definition}

\begin{definition}
 Given two monoidal categories $\categ C, \categ D$, a lax monoidal functor from $\categ C$ to $\categ D$ is the data of a functor $f : \categ C \to \categ D$ together with a natural transformation
 $$
 f(X) \otimes f(Y) \to f(X \otimes Y)
 $$
 and a map $\II \to f(\II)$ so that the following diagrams commute
 $$
 \begin{tikzcd}
      (f(X) \otimes f(Y)) \otimes f(Z)
      \ar[r] \ar[d]
      & f(X) \otimes (f(Y) \otimes f(Z))
      \ar[d]
      \\
      f(X\otimes Y) \otimes f(Z)
      \ar[d]
      & f(X) \otimes f(Y \otimes Z)
      \ar[d]
      \\
      f((X\otimes Y) \otimes Z)
      \ar[r]
      & f(X \otimes (Y \otimes Z))
 \end{tikzcd}
 $$
 $$
 \begin{tikzcd}
      \II \otimes f(X)
      \ar[r] \ar[d]
      & f(\II) \otimes f(X)
      \ar[d]
      \\
      f(X)
      \ar[r]
      & f(\II \otimes X) ,
 \end{tikzcd}
 \quad
 \begin{tikzcd}
      f(X) \otimes \II
      \ar[r] \ar[d]
      & f(X) \otimes f(\II)
      \ar[d]
      \\
      f(X)
      \ar[r]
      & f(X \otimes \II) .
 \end{tikzcd}
 $$
\end{definition}

\begin{definition}
 Given lax monoidal functors between monoidal categories $f,g : \categ C \to \categ D$, a monoidal natural transformation between them is a natural transformation between the functor $f,g$ so that the following diagrams commute
 $$
 \begin{tikzcd}
      f(X) \otimes f(Y)
      \ar[r] \ar[d]
      & f(X \otimes Y)
      \ar[d]
      \\
      g(X) \otimes g(Y)
      \ar[r]
      & g(X \otimes Y)
 \end{tikzcd}
 \quad
 \begin{tikzcd}
      \II
      \ar[r] \ar[rd]
      &f(\II)
      \ar[d]
      \\
      & g(\II).
 \end{tikzcd}
 $$
\end{definition}

\subsection{Symmetric monoidal categories}

\begin{definition}
 A symmetric monoidal category is the data of a category $\categ C$ equipped with a bifunctor
 $$
 - \otimes - : \categ C \times \categ C \to \categ C
 $$
 an object $\II$ and natural isomorphisms
 $$
 X \otimes (Y \otimes Z) \simeq (X \otimes Y) \otimes Z,\quad X \otimes Y \simeq Y \otimes X,\quad \II \otimes X \simeq X,\quad X \otimes \II \simeq X
 $$
 called respectively the associator, the commutator, the left unitor and the right unitor and so that the following diagrams commute
 $$
 \begin{tikzcd}
      ((U \otimes X ) \otimes Y ) \otimes Z
      \ar[r] \ar[d]
      & (U \otimes X ) \otimes (Y  \otimes Z)
      \ar[dd]
      \\
      (U \otimes (X  \otimes Y )) \otimes Z
      \ar[d]
      \\
      U \otimes ((X  \otimes Y ) \otimes Z)
      \ar[r]
      & U \otimes (X  \otimes (Y  \otimes Z))
 \end{tikzcd}
 \quad
 \begin{tikzcd}
      ( X \otimes Y ) \otimes Z
      \ar[r] \ar[d]
      & ( Y \otimes X ) \otimes Z
      \ar[d]
      \\
      Z \otimes ( X \otimes Y )
      \ar[d]
      & Y \otimes ( X \otimes Z )
      \ar[d]
      \\
      ( Z \otimes X ) \otimes Y
      \ar[r]
      & Y \otimes ( Z \otimes X )
 \end{tikzcd}
 $$
 $$
 \begin{tikzcd}
      X \otimes Y
      \ar[r] \ar[rd, equal]
      & Y \otimes X
      \ar[d]
      \\
      & X \otimes Y
 \end{tikzcd}
 \quad
 \begin{tikzcd}
      \II \otimes X
      \ar[r] \ar[rd]
      & X \otimes \II
      \ar[d]
      \\
      & X
 \end{tikzcd}
 \quad
 \begin{tikzcd}
      (X \otimes \II) \otimes Y
      \ar[r] \ar[rd]
      & X \otimes (\II \otimes Y)
      \ar[d]
      \\
      & X \otimes Y .
 \end{tikzcd}
 $$
\end{definition}

\begin{definition}
 Given two symmetric monoidal categories $\categ C, \categ D$, a lax symmetric monoidal functor from $\categ C$ to $\categ D$ is the data of a functor $f : \categ C \to \categ D$ together with a natural transformation
 $$
 f(X) \otimes f(Y) \to f(X \otimes Y)
 $$
 and a map $\II \to f(\II)$ so that the following diagrams commute
 $$
 \begin{tikzcd}
      (f(X) \otimes f(Y)) \otimes f(Z)
      \ar[r] \ar[d]
      & f(X) \otimes (f(Y) \otimes f(Z))
      \ar[d]
      \\
      f(X\otimes Y) \otimes f(Z)
      \ar[d]
      & f(X) \otimes f(Y \otimes Z)
      \ar[d]
      \\
      f((X\otimes Y) \otimes Z)
      \ar[r]
      & f(X \otimes (Y \otimes Z))
 \end{tikzcd}
 $$
 $$
 \begin{tikzcd}
      f(X) \otimes f(Y)
      \ar[r] \ar[d]
      & f(Y) \otimes f(X)
      \ar[d]
      \\
      f(X\otimes Y)
      \ar[r]
      & f(Y \otimes X)
 \end{tikzcd}
 \quad
 \begin{tikzcd}
      f(X) \otimes \II
      \ar[r] \ar[d]
      & f(X) \otimes f(\II)
      \ar[d]
      \\
      f(X)
      \ar[r]
      & f(X \otimes \II) .
 \end{tikzcd}
 $$
\end{definition}

\begin{definition}
 Given lax symmetric monoidal functors between symmetric monoidal categories $f,g : \categ C \to \categ D$, a monoidal natural transformation between them is a natural transformation between the functor $f,g$ so that the following diagrams commute
 $$
 \begin{tikzcd}
      f(X) \otimes f(Y)
      \ar[r] \ar[d]
      & f(X \otimes Y)
      \ar[d]
      \\
      g(X) \otimes g(Y)
      \ar[r]
      & g(X \otimes Y)
 \end{tikzcd}
 \quad
 \begin{tikzcd}
      \II
      \ar[r] \ar[rd]
      &f(\II)
      \ar[d]
      \\
      & g(\II).
 \end{tikzcd}
 $$
\end{definition}

\section{Category enriched tensored and cotensored over a monoidal category}

In this appendix, we recall some notions related to categories enriched, tensored and cotensored over a monoidal category.

\subsection{Category tensored over a monoidal category}

\begin{definition}
A category $\categ C$ is said to be tensored over a monoidal category $(\categ{E}, \otimes, \II)$
if it is equipped with the structure of a lax $\categ E$-module, that is there exists an
oplax monoidal functor
 \[
 	\categ E \to \Fun{}{\categ C}{\categ C}.
 \]
Equivalently, there exists
a bifunctor $
 - \boxtimes - :  \categ{E} \times \categ{C} \ra \categ{C}$
 together with natural transformations
\begin{align*}
 	\II \boxtimes X &\to X \ ,\\
	\left({\coalgebra V}' \otimes {\coalgebra V}\right)\boxtimes X &\to {\coalgebra V}' \boxtimes \left( {\coalgebra V} \boxtimes X\right) \ ,
\end{align*}
that satisfy coherences.
\end{definition}

Let $\categ E$ and $\categ F$ be two monoidal categories and let $G : \categ E \to \categ F$ be a lax monoidal functor.

\begin{definition}
Let $\categ C$ and $\categ D$ be respectively a lax $\categ E$-module a lax $\categ F$-module.
A $G$-strength on $F$ is the additional structure of
a lax morphism of lax modules that is a natural morphism $G(\coalgebra V) \boxtimes F( X) \to F(\coalgebra V \boxtimes X)$
 for any $\coalgebra V \in \categ E$ and any $X \in \categ C$,
 so that the following diagrams commute
 \[
\begin{tikzcd}
	(G(\coalgebra V) \otimes G(\coalgebra V')) \boxtimes  F(X)
	\arrow[r] \arrow[d]
	&
	G(\coalgebra V) \boxtimes (G(\coalgebra V') \boxtimes F(X))
	\arrow[r]
	&
	G(\coalgebra V) \boxtimes F(\coalgebra V' \boxtimes X)
	\arrow[d]
	\\
	G(\coalgebra V \otimes \coalgebra V') \boxtimes  F(X)
	\arrow[r]
	&
	F((\coalgebra V \otimes \coalgebra V') \boxtimes X)
	\arrow[r]
	& F(\coalgebra V \boxtimes (\coalgebra V' \boxtimes X)) ;
\end{tikzcd}
 \]
 \[
 \begin{tikzcd}
	{\II \boxtimes F(X )}
	\arrow[r] \arrow[d]
	& {G(\II) \boxtimes F (X)}
	\arrow[d]
	\\
	F(X)
	\arrow[r]
	& F(\II \boxtimes X) .
\end{tikzcd}
 \]
 In the case where all the structural morphisms $G(\coalgebra V) \boxtimes F( X) \to F(\coalgebra V \boxtimes X)$ are isomorphisms,
 one talks about $G$-tensorial strength isomorphism. In the case where $G$ is the identity functor of $\categ E$, one talks about a $\categ E$-tensorial strength for $F$.
\end{definition}

Let us consider two lax monoidal functors $G,G': \categ E \to \categ F$ between monoidal categories, a monoidal natural transformation $G \to G'$, two categories
$\categ C$ and $\categ D$ tensored over respectively $\categ E$ and $\categ F$ and two functors $F,F': \categ C \to \categ D$. Let us suppose that
$F$ is equipped with a $G$-strength and that $F'$ is equipped with a $G'$-strength.

\begin{definition}\label{definistrongnat}
A strong natural transformation from $F$ to $F'$, with respect to their strength and with respect to the monoidal natural transfomation $G \to G'$ is a natural transformation
so that the following diagram commutes
 \[
\begin{tikzcd}
	G(\coalgebra V) \boxtimes F(X)
	\arrow[r] \arrow[d]
	& F(\coalgebra V\boxtimes X)
	\arrow[d]
	\\
	G'(\coalgebra V) \boxtimes F'(X)
	\arrow[r]
	&F'(\coalgebra V\boxtimes X) 
\end{tikzcd}
 \]
for any $X \in \categ C, \coalgebra V \in \categ E$.
\end{definition}

 \subsection{Tensorisation, cotensorisation and enrichment}

\begin{definition}
 For any monoidal category $(\categ{E}, \otimes, \mathbb 1)$, we denote by $\categ E^{tr} = (\categ{E}, \otimes, \mathbb 1)$ the transposed monoidal category of $\categ E$, that is the same category with the opposite monoidal structure
 \[
 	X \otimes^{tr} Y \coloneqq Y \otimes X.
 \]
\end{definition}

\begin{definition}[Category cotensored over a monoidal category]\label{def:tce}
Let $(\categ{E}, \otimes, \mathbb 1)$ be a monoidal category and let $\categ{C}$ be a category.
We say that $\categ{C}$ is \textit{cotensored} over $\categ{E}$ if there exists a bifunctor 
 \[
 	\langle - , - \rangle :  \categ{C} \times \categ{E}^{\op} \ra \categ{C}\\	
 \]
 together with natural transformations
\begin{align*}
 	X &\to \langle X , \II \rangle\ ,\\
	 \langle \langle X,  {\coalgebra V} \rangle , {\coalgebra V}' \rangle &\to\langle X , {\coalgebra V} \otimes {\coalgebra V}' \rangle\ ,
\end{align*}
that makes the category $\categ C$ tensored over $\categ E^{optr} = (\categ E^\op, \otimes^{tr}, \II)$.
\end{definition}
 
\begin{definition}
 A category enriched over a monoidal category $\categ E$, (or $\categ E$-category) $\categ C$ is the data of a set (possibly large)
 called the set of objects or the set of colours, an object $\{X,Y\}\in \categ E$ for any two colours $X,Y$ and morphisms
\begin{align*}
 	\gamma:\{Y,X\} \otimes \{X,Y\} &\to \{X,Z\};
	\\
	\eta_X: \II & \to \{X,X\}
\end{align*}
for any three colours $X,Y,Z$ that define a unital associative composition. Moreover, a morphism of $\categ E$-categories
 from $\categ C$ to $\categ D$ is the data of a function on colours $\phi$ and morphisms
 \[
 	\{X,Y\} \to \{\phi(X), \phi(Y)\}
 \]
 for any two colours $X,Y$ of $\categ C$ that commute with units and compositions. This defines the category $\categ{Cat}_{\categ E}$ of $\categ E$-categories.
\end{definition}

\begin{definition}\label{definitiontce}
A category \textit{tensored-cotensored-enriched} (TCE for short) over $\categ{E}$ is the data of a category $\categ{C}$ equipped with three bifunctors:
  $$
\begin{cases}
- \boxtimes -: \categ E \times \categ C \to \categ C\\
 \{-,-\}: \categ{C}^{\op} \times \categ{C} \to \categ{E}\\
 \langle -, - \rangle: \categ{C} \times \categ{E}^{\op} \to \categ{C} ,
\end{cases}
  $$
 with natural isomorphisms,
 \[
 \hom_\categ{C} ({\coalgebra V} \boxtimes X , Y) \simeq \hom_\categ{E} ({\coalgebra V}, \{X,Y\}) \simeq \hom_\categ{C} (X, \langle Y,  {\coalgebra V} \rangle)\ ,
 \]
 and with 
\begin{itemize}
 \itemt  a structure of a tensorisation on $-\boxtimes -$,
 \itemt or equivalently a structure of a cotensorisation on $\langle -, - \rangle$,
 \itemt or equivalently a structure of an enrichment on $\{-,-\}$.
\end{itemize}
\end{definition}

Given a tensorisation $-\boxtimes -$, the composition and the unit on $\{-,-\}$ are the adjoints maps of the morphisms
\begin{align*}
	(\{Y,Z\} \otimes  \{ X, Y\}) \boxtimes X\to \{Y,Z\} \boxtimes ( \{ X, Y\} \boxtimes X) \to \{Y,Z\} \boxtimes Y \to Z\ ,
	\\
	\II \boxtimes X \to X .
\end{align*}
Given a cotensorisation $\langle-, -\rangle$, the composition and the unit on $\{-,-\}$ are the adjoints maps of the morphisms of the map
\begin{align*}
	X \to \langle Y, \{X,Y\}\rangle \to \langle \langle Z,  \{Y,Z\} \rangle, \{X,Y\}\rangle
	\to \langle Y, \{Y,Z\} \otimes  \{X,Y\}\rangle\ ,
	\\
	X \to \langle X, \II\rangle .
\end{align*}
Conversely, given an enrichment $\{-,-\}$, the structural tensorisation morphisms for $- \boxtimes -$ are adjoints to the maps
\begin{align*}
	\coalgebra V \otimes \coalgebra V'
	\to \{\coalgebra V' \boxtimes X, \coalgebra V\boxtimes (\coalgebra V' \boxtimes  X)\}
	\otimes \{X, \coalgebra V' \boxtimes X\}
	\to \{X, \coalgebra V\boxtimes (\coalgebra V' \boxtimes  X)\}\ ,
	\\
	\II \to \{X,X\},
\end{align*}
and the structural cotensorisation morphisms for $\langle-, -\rangle$ are adjoints to the maps
\begin{align*}
	\coalgebra V \otimes \coalgebra V'
	\to \{ \langle X, \coalgebra V\rangle, X\} 
	\otimes \{\langle \langle X,\coalgebra V\rangle, \coalgebra V'\rangle, \langle X, \coalgebra V\rangle\}
	\to \{\langle \langle X,\coalgebra V\rangle, \coalgebra V'\rangle, X\}\ ,
	\\
	\II \to \{X,X\}.
\end{align*}

\begin{remark}\label{lemmachangeenrich}
Given an adjunction between monoidal categories
\[
\begin{tikzcd}
 	\categ E
	\arrow[rr, shift left,"L"]
	&&
	 \categ F
	\arrow[ll, shift left,"R"]
\end{tikzcd}
\]
where $L$ is monoidal (and then $R$ is lax monoidal), then
any category $\categ C$ TCE over $\categ F$
 is also TCE over $\categ E$. 
\end{remark}

\begin{definition}
A biclosed monoidal category is a monoidal category $(\categ{E}, \otimes, \mathbb 1)$ TCE over itself in a way so that the tensorisation
is the tensor product of $\categ E$. If it is symmetric, one says simply that it is closed.
\end{definition}

\begin{notation}
When dealing with a biclosed (or a closed) monoidal category, the enrichment will usually be denoted $[-,-]$. 
\end{notation}

 \subsection{Strength in the context of categories enriched-tensored and cotensored}
\label{sectionstrength}

 Let $G : \categ E \to \categ F$ be
lax monoidal functor between monoidal categories.  Moreover, let $\categ C$ and $\categ D$ be two categories TCE respectively over $\categ E$ and $\categ F$ and let $F: \categ C \to \categ D$ be a functor.

A $G$-strength on $F$ is equivalent to the data of natural morphisms $F(\langle X, \coalgebra V\rangle) \to \langle F(X), G(\coalgebra V)\rangle$ for any $\coalgebra V \in \categ E$ and any $X \in \categ C$,
 so that the following diagrams commute
 \[
\begin{tikzcd}
	F(\langle \langle X, \coalgebra V\rangle, \coalgebra W \rangle)
	\arrow[r] \arrow[d]
	&\langle F(\langle X, \coalgebra V\rangle), G(\coalgebra W) \rangle
	\arrow[r]
	&
	\langle \langle F(X), G(\coalgebra V)\rangle, G(\coalgebra W) \rangle
	\arrow[d]
	\\
	F(\langle X, \coalgebra V\otimes \coalgebra W \rangle)
	\arrow[r]
	&
	\langle F(X), G(\coalgebra V\otimes \coalgebra W) \rangle
	\arrow[r]
	&  \langle F(X), G(\coalgebra V)\otimes G(\coalgebra W) \rangle;
\end{tikzcd}
 \]
 \[
 \begin{tikzcd}
	F(\langle X, \II \rangle)
	\arrow[r] \arrow[d]
	& \langle F(X), G(\II)\rangle
	\arrow[d]
	\\
	F(X)
	\arrow[r]
	& \langle F(X), \II \rangle.
\end{tikzcd}
 \]
 It is also equivalent to the data of natural morphisms $G\{X,Y\} \to \{FX,FY\}$ for any $X,Y \in \categ C$,
 so that the following diagrams commute
 \[
\begin{tikzcd}
 	G(\{Y,Z\})\otimes G(\{X,Y\})
	\arrow[r] \arrow[d]
	&\{FY,FZ\} \otimes \{FX,FY\}
	\arrow[dd]
	\\
	G(\{Y,Z\}\otimes \{X,Y\})
	\arrow[d]
	\\
	\{X,Z\}
	\arrow[r]
	&G(\{FX,FZ\});
\end{tikzcd}
\begin{tikzcd}
 	\II
	\arrow[r] \arrow[d]
	&G(\II)
	\arrow[d]
	\\
	\{FX,FX\}
	& G(\{X,X\}) .
	\arrow[l] 
\end{tikzcd}
 \]

Moreover, let us consider another lax monoidal functor $G': \categ E \to \categ F$ a monoidal natural transformation $G \to G'$, and another functor $F': \categ C \to \categ D$ equipped with a $G'$-strength. Then, a natural transformation $F \to F'$ is strong if and only if the following diagram commutes
 \[
\begin{tikzcd}
 	F(\langle X, \coalgebra V\rangle)
	\arrow[r] \arrow[d]
	& \langle F(X), G(\coalgebra V)\rangle
	\arrow[rd]
	\\
	F'(\langle X, \coalgebra V\rangle)
	\arrow[r]
	&\langle F'(X), G'(\coalgebra V)\rangle
	\arrow[r]
	& \langle F'(X), G(\coalgebra V)\rangle.
\end{tikzcd}
 \]
 for any $X \in \categ C, \coalgebra V \in \categ E$, if and only if the following diagram commutes
 \[
\begin{tikzcd}
 	G(\{X,Y\}) 
	\arrow[r] \arrow[d]
	& \{F(X),F(Y)\}
	\arrow[rd]
	\\
	G'(\{X,Y\}) 
	\arrow[r]
	&\{F'(X),F'(Y)\}
	\arrow[r]
	& \{F(X),F'(Y)\}.
\end{tikzcd}
 \]
 for any $X,Y \in \categ C$.

 \subsection{Strong adjunctions}
\label{sectionstrongadj}

Let us consider an adjunction
\[
\begin{tikzcd}
 	\categ C
	\arrow[rr, shift left, "L"]
	&& \categ D .
	\arrow[ll, shift left, "R"]
\end{tikzcd}
\]
We suppose that $\categ C$ and $\categ D$ are TCE over a monoidal category $\categ E$ and $\categ F$.

\begin{definition}\label{definitionstrongadj}
The adjunction $L \dashv R$ is said to be strong if $L$ and $R$ are equipped with strengths with respect to $\categ E$ so that the natural transformations
$\id \to RL$ and $LR \to \id$ are strong.
\end{definition}

Equivalently, by doctrinal adjunction, $L \dashv R$ is strong if $L$ is equipped with a strength so that the natural morphism
$$
    V \boxtimes L(X) \to L(V \boxtimes X)
$$
is an isomorphism for any $V,X \in \categ E \times \categ C$. Then, the strength on $R$ is given by the formula
$$
 V \boxtimes R(Y) \to RL(V \boxtimes R(Y))
 \simeq R(V \boxtimes LR(Y)) \to 
 R(V \boxtimes Y) .
$$

One has also an adjunction relating the opposite categories
\[
\begin{tikzcd}
 	\categ D^{\op}
	\arrow[rr, shift left, "R^{\op}"]
	&& \categ C^{\op} .
	\arrow[ll, shift left, "L^{\op}"]
\end{tikzcd}
\]
The cotensorisation of $\categ C$ and $\categ D$ over $\categ E$ are actually tensorisation of $\categ C^{\op}$ and $\categ D^{\op}$ over $\categ E^{tr}$.

\begin{proposition}
 The structure of a strong adjunction on $L \dashv R$ with respect the tensorisation of $\categ C$ and $\categ D$ over $\categ E$ is equivalent to the structure on a strong adjunction on $R^{\op} \dashv L^{\op}$ with respect the tensorisation of $\categ D^{\op}$ and $\categ C^{\op}$ over $\categ E^{tr}$.
\end{proposition}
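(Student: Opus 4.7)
The strategy is to reduce both sides to equivalent invertible-strength conditions via the characterisation recalled just after Definition~\ref{definitionstrongadj}, and then match these using the mate correspondence.

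Applying that characterisation to $L \dashv R$ first: a strong adjunction structure on $L \dashv R$ with respect to the $\categ E$-tensorisation is equivalent to an invertible tensorial strength $\sigma : V \boxtimes L(X) \xrightarrow{\simeq} L(V \boxtimes X)$ on $L$, the strength on $R$ being forced by the formula of the remark. Applying the same characterisation to $R^{\op} \dashv L^{\op}$ (in which $R^{\op}$ is the left adjoint) with respect to the $\categ E^{tr}$-tensorisations on $\categ D^{\op}$ and $\categ C^{\op}$, a strong adjunction structure on $R^{\op} \dashv L^{\op}$ is equivalent to an invertible tensorial strength on $R^{\op}$ with respect to $\categ E^{tr}$.

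Next, I unwind the opposites. By the definition of the transposed monoidal context given in the subsection preceding the statement, the tensorisation $V \boxtimes^{tr} -$ on $\categ D^{\op}$ (resp.\ $\categ C^{\op}$) over $\categ E^{tr}$ is precisely the cotensorisation $\langle -, V\rangle$ of $\categ D$ (resp.\ $\categ C$) over $\categ E$. Consequently, an invertible tensorial strength on $R^{\op}$ with respect to $\categ E^{tr}$ translates to an invertible natural transformation $\tau : R\langle Y, V\rangle \xrightarrow{\simeq} \langle R(Y), V\rangle$ in $\categ C$ satisfying the coherences of a cotensorial strength on $R$ with respect to $\categ E$, in the form described in Section~\ref{sectionstrength}.

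It then remains to exhibit a bijection, compatible with coherences, between invertible tensorial strengths $\sigma$ on $L$ and invertible cotensorial strengths $\tau$ on $R$. This is given by the mate correspondence applied to the adjunctions $L \dashv R$ and $V \boxtimes - \dashv \langle -, V\rangle$: inverting $\sigma$, the mate of $\sigma^{-1} : L(V \boxtimes -) \to V \boxtimes L(-)$ is precisely a natural transformation $\tau : R\langle -, V\rangle \to \langle R(-), V\rangle$, and since taking mates preserves invertibility, the two invertibility conditions correspond. The main technical obstacle is checking that the tensorial strength coherences for $\sigma$ correspond, under the mate, to the cotensorial strength coherences for $\tau$; this is a standard exercise in mate calculus, and is essentially the two-variable extension of the equivalence between the tensorial and cotensorial forms of a $G$-strength worked out in Section~\ref{sectionstrength}, so no new verification is required.
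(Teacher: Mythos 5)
Your argument is correct, but it is organised differently from the paper's. The paper keeps the full packet of data in Definition~\ref{definitionstrongadj} --- a strength on $L$, a strength on $R$, and the strongness of $\id \to RL$ and $LR \to \id$ --- and transports each piece across the dictionary of Subsection~\ref{sectionstrength} (tensorial strengths on $L$, $R$ over $\categ E$ correspond to strengths on $L^{\op}$, $R^{\op}$ over $\categ E^{tr}$, and strong natural transformations correspond to strong natural transformations). You instead first collapse each side to its minimal presentation via the doctrinal-adjunction remark --- a strong adjunction is an invertible tensorial strength on the left adjoint --- and then match the two minimal presentations by the conjugate correspondence between the composite adjunctions $L(V \boxtimes -) \dashv \langle R(-), V\rangle$ and $V \boxtimes L(-) \dashv R\langle -, V\rangle$. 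This buys you something real: you never have to discuss the unit and counit at all, and the preservation of invertibility is automatic because conjugation of transformations between left adjoints sharing source and target is an isomorphism of categories. What it costs you is the verification you defer at the end: that the conjugate of $\sigma^{-1}$ satisfies the cotensorial strength coherences (including naturality in the $\categ E$-variable) exactly when $\sigma$ satisfies the tensorial ones. That check is true and standard, but it is not quite ``no new verification'' --- it is the same kind of diagram transport that the paper's proof outsources to Subsection~\ref{sectionstrength}, so the two proofs end up deferring comparable amounts of routine work. One small point worth making explicit if you keep your route: the cotensorial strength on $R$ you produce by conjugation agrees with the tensorial strength on $R$ forced by the doctrinal-adjunction formula, under the tensorial/cotensorial translation; this is what guarantees that your bijection and the paper's are the same one.
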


\begin{proof}
In Subsection \ref{sectionstrength}, we saw that we have a one to one correspondence between the strengths on $L$ (resp. $R$) with respect to $\categ E$ and the strengths on $L^{op}$ (resp. $R^{op}$) with respect to $\categ E^{tr}$.
Then, given a pair of strength on $L$ and $R$, the natural transformations $\id \to RL$ and $LR \to \id$ are strong if and only if the natural transformations $\id \to L^{\op}R^{\op}$ and $R^{\op}L^{\op} \to \id$ are strong
with respect to the induced strength on $L^{op}$ and $R^{op}$.
\end{proof}

\begin{proposition}
 Let us suppose that the functors $L$ and $R$ are equipped with strengths. Then, the following assertions are equivalent.
 \begin{enumerate}
     \item \label{ptone} the natural transformations $\id \to RL$ and $LR \to \id$ are strong (hence, the adjunction $L \dashv R$ is strong);
     \item \label{pttwo} the natural maps
     \begin{align*}
     &\{LX , Y\} \to \{RLX , RY\} \to \{X , RY\};
     \\
     &\{X , RY\} \to \{LX , LRY\} \to \{LX , Y\};
     \end{align*}
     are isomorphisms inverse to each other for any $X,Y \in \categ C \times \categ D$.
 \end{enumerate}
\end{proposition}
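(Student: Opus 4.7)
The plan is to recognize condition (2) as the enriched version of the classical adjunction bijection on hom-sets, and then to connect it to (1) through the enriched reformulation of strong naturality given in Section \ref{sectionstrength}. Denote the two composites in (2) by
\[
\alpha_{X,Y} : \{LX,Y\} \to \{X,RY\}, \qquad \beta_{X,Y} : \{X,RY\} \to \{LX,Y\}.
\]
Applying $\hom(\II,-)$ to these arrows recovers the usual adjunction bijection, since $\alpha$ corresponds on global sections to $g \mapsto R(g) \circ \eta_X$ and $\beta$ to $h \mapsto \epsilon_Y \circ L(h)$, which are mutually inverse by the triangle identities. Thus the content of (2) is that $\alpha$ and $\beta$ are already inverse in the enriching category, not merely at the level of global sections.

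For the implication $(1) \Rightarrow (2)$, I assume $\eta$ and $\epsilon$ are strong. Via the enriched reformulation of Definition \ref{definistrongnat}, strong naturality of $\epsilon$ is exactly the identity $(\epsilon_V)_* \circ \mathrm{str}_{LR} = \epsilon_U^*$ as maps $\{U,V\} \to \{LRU,V\}$, and dually for $\eta$. I would expand $\beta_{X,Y} \circ \alpha_{X,Y}$, use naturality of $\mathrm{str}_L$ to commute it past the precomposition $\eta_X^*$ (so that the pre- and post-compositions land on different slots and separate cleanly), collect the two strengths into $\mathrm{str}_{LR}$, and then apply strong naturality of $\epsilon$ to replace $(\epsilon_Y)_* \circ \mathrm{str}_{LR}$ by $\epsilon_{LX}^*$. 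The remaining composite is precomposition by $\epsilon_{LX} \circ L\eta_X$, which equals $\id_{LX}$ by the triangle identity for $L$, yielding $\beta \circ \alpha = \id$. A symmetric calculation gives $\alpha \circ \beta = \id$ using strong naturality of $\eta$ and the triangle identity for $R$.

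For the converse $(2) \Rightarrow (1)$, I assume $\alpha$ and $\beta$ are inverse isomorphisms. To show that $\eta$ is strong, I must verify $(\eta_Y)_* = \eta_X^* \circ \mathrm{str}_{RL}$ as maps $\{X,Y\} \to \{X,RLY\}$. The right-hand side factors as $\alpha_{X,LY} \circ \mathrm{str}_L$ by the very definition of $\alpha$, so, using that $\beta_{X,LY}$ is inverse to $\alpha_{X,LY}$, it suffices to establish $\beta_{X,LY} \circ (\eta_Y)_* = \mathrm{str}_L$. Expanding $\beta$ and commuting $\mathrm{str}_L$ past $(\eta_Y)_*$ by naturality, this composite becomes $\mathrm{str}_L$ post-composed with $(\epsilon_{LY} \circ L\eta_Y)_* = \id$, again by the triangle identity. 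Strong naturality of $\epsilon$ follows dually using $\alpha \circ \beta = \id$. The main obstacle in both directions is to apply the strong naturality identities (respectively the inverse property of $\alpha$ and $\beta$) at precisely the right cell so that the remaining composite is annihilated by the appropriate triangle identity; everything else is the standard bookkeeping of naturality squares for $\mathrm{str}_L$, $\mathrm{str}_R$, $\eta$, and $\epsilon$.
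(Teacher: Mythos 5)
Your proof is correct and takes essentially the same route as the paper: both translate strong naturality of $\eta$ and $\epsilon$ into the enriched-hom squares established in the subsection on strengths, prove $(1)\Rightarrow(2)$ by computing $\beta\circ\alpha$ and $\alpha\circ\beta$ via naturality of the strengths together with the triangle identities, and prove the converse by isolating an auxiliary identity (your $\beta_{X,LY}\circ(\eta_Y)_*=\mathrm{str}_L$, the paper's mirror statement for $\epsilon$) that is then cancelled against the inverse property of $\alpha$ and $\beta$. Your write-up simply supplies the details that the paper leaves as ``straightforward.''
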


\begin{proof}
We know from Subsection \ref{sectionstrength} that the fact that the natural transformations $\id \to RL$ and $LR \to \id$ are strong is equivalent to the commutation of the following two diagrams
$$
\begin{tikzcd}
     \{X,Y\} 
     \ar[r] \ar[d]
     & \{ RX,RY\}
     \ar[d]
     \\
     \{LR X ,Y\}
     & \{ LRX,LRY\}
     \ar[l]
\end{tikzcd}
\quad
\begin{tikzcd}
     \{X',Y'\} 
     \ar[r] \ar[d]
     & \{ LX',LY'\}
     \ar[d]
     \\
     \{X' , RL Y'\}
     & \{ RLX',RLY'\} 
     \ar[l]
\end{tikzcd}
$$
for any $X,X',Y,Y'$.

Let us suppose (\ref{ptone}). Using the commutation of the diagram just above, it is straightforward to prove that the composite maps
 \begin{align*}
 &\{LX,Y\} \to \{X,RY\} \to \{LX,Y\} ,
 \\
 &\{X,RY\} \to \{LX,Y\} \to \{X,RY\} ,
 \end{align*}
 are identities.
 
Conversely, let us suppose (\ref{pttwo}). By naturality, the two following diagrams are commutative
$$
\begin{tikzcd}
     \{X,Y\} 
     \ar[r] \ar[d]
     & \{ RX,RY\}
     \\
     \{LR X ,Y\}
     \ar[r]
     & \{ RLRX,RY\}
     \ar[u]
\end{tikzcd}
\quad
\begin{tikzcd}
     \{X',Y'\} 
     \ar[r] \ar[d]
     & \{ LX',LY'\}
     \\
     \{X' , RL Y'\}
     \ar[r]
     & \{ LX',LRLY'\}
     \ar[u]
\end{tikzcd}
$$
for any $X,X',Y,Y'$. Then, the commutation of the two previous squares follows from the fact that the maps
\begin{align*}
    &\{LR X ,Y\} \to \{ RX,RY\}
    \\
    &\{X' , RL Y'\} \to \{ LX',LY'\}
\end{align*}
are respective inverses of the maps that appear in these previous diagrams.
\end{proof}

\subsection{Homotopical enrichment}

\begin{definition}[Homotopical enrichment]\label{defin:almosthomotop}\leavevmode
Let $\categ M$ be a model category and let $\categ{E}$ be a monoidal model category. We say that  $\categ M$ is homotopically TCE over $\categ{E}$ if it TCE over $\categ{E}$ and if for any cofibration $f: X \ra X'$ in $\categ M$ and any fibration $g : Y \ra Y'$ in $\categ M$, the morphism in $\categ{E}$:
 $$
 \{X',Y\} \ra \{X',Y\} \times_{\{X,Y'\}} \{X,Y\}
 $$ 
 is a fibration. Moreover, we require this morphism to be a weak equivalence whenever $f$ or $g$ is a weak equivalence.
\end{definition}

This is equivalent to the fact that for any cofibration $f:X \to Y$ in $\categ M$ and any cofibration $g:\coalgebra V \to \coalgebra W$ in $\categ E$,
the morphism
\[
	\coalgebra V \boxtimes Y \coprod_{\coalgebra V \boxtimes X} \coalgebra W \boxtimes X \to \coalgebra W \boxtimes Y
\]
is a cofibration and it is acyclic whenever $f$ or $g$ is acyclic. This is also equivalent to the fact that for any 
fibration $f:X \to Y$ in $\categ M$ and any cofibration $g:\coalgebra V \to \coalgebra W$ in $\categ E$, the morphism
\[
	\langle X, \coalgebra W \rangle \to \langle X, \coalgebra V \rangle \times_{\langle Y, \coalgebra V \rangle} \langle Y, \coalgebra W \rangle
\]
is a fibration and it is acyclic whenever $f$ or $g$ is acyclic.


\section{Monads, comonads, limits and colimits}
\label{appendixcomonads}

\begin{proposition}
Let $M$ be a monad on a category $\categ C$. Then, the functor $U^M$ preserves and creates limits
that may exist in $\categ C$.
\end{proposition}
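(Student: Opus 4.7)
The plan is to use the universal property of limits in $\categ C$ to both construct the $M$-algebra structure on a putative lift and to verify the required properties. Let $D : I \to \catalg{\categ C}{M}$ be a diagram of $M$-algebras, with structure maps $\alpha_i : M U^M D(i) \to U^M D(i)$, and suppose that the diagram $U^M \circ D$ in $\categ C$ admits a limit, with limiting cone $(\pi_i : L \to U^M D(i))_{i \in I}$. I want to show that $L$ lifts uniquely to an $M$-algebra so as to become the limit in $\catalg{\categ C}{M}$, and that this is the only way the limiting cone can be lifted through $U^M$.

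For the construction, I would apply $M$ to the cone $(\pi_i)$ to obtain morphisms $M\pi_i : ML \to MU^M D(i)$, and then compose with the structure maps to get morphisms $\alpha_i \circ M\pi_i : ML \to U^M D(i)$. These form a cone over $U^M \circ D$: for any $f : i \to j$ in $I$, the equality $U^M D(f) \circ \alpha_i \circ M\pi_i = \alpha_j \circ M U^M D(f) \circ M\pi_i = \alpha_j \circ M\pi_j$ follows, respectively, from $D(f)$ being a morphism of $M$-algebras and from $(\pi_i)$ being a cone. Hence there is a unique $\alpha : ML \to L$ with $\pi_i \circ \alpha = \alpha_i \circ M\pi_i$ for all $i$.

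The unit and associativity axioms for $(L, \alpha)$ are then checked by showing that both composites in each axiom give the same cone over $U^M D$ and invoking uniqueness in the universal property; for example, for the unit axiom one verifies $\pi_i \circ \alpha \circ u_L = \pi_i$ by using that each $D(i)$ is an $M$-algebra and the naturality of $u$. To see that $(L, \alpha)$ is the limit in $\catalg{\categ C}{M}$, given a cone of algebra morphisms $(g_i : (A, \beta) \to D(i))$, there is a unique $g : A \to L$ in $\categ C$ with $\pi_i \circ g = g_i$; the fact that $g$ is an algebra morphism, i.e.\ $g \circ \beta = \alpha \circ Mg$, again follows from uniqueness, since both $\pi_i \circ g \circ \beta$ and $\pi_i \circ \alpha \circ Mg$ equal $\alpha_i \circ Mg_i$.

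Finally, for creation, any $M$-algebra structure $\alpha' : ML \to L$ that lifts the cone $(\pi_i)$ to a cone in $\catalg{\categ C}{M}$ must by definition satisfy $\pi_i \circ \alpha' = \alpha_i \circ M\pi_i$ for all $i$, and by the universal property this forces $\alpha' = \alpha$. No step is really hard here; the only thing to be careful about is making sure that every diagram one wants to commute is shown to commute after postcomposition with each $\pi_i$, so that the universal property of the limit does the verification for free.
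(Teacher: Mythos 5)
Your construction of the $M$-algebra structure on the limit is essentially the paper's: the cone $\alpha_i \circ M\pi_i : ML \to U^M D(i)$ is exactly what the paper writes as the composite $M(\varprojlim U^M D) \to \varprojlim M U^M D \to \varprojlim U^M D$, and you carry out explicitly the verifications (unit and associativity axioms, the universal property in $\catalg{\categ C}{M}$, uniqueness of the lift) that the paper leaves to the reader. The one part of the statement you do not address is \emph{preservation}. Creation in the form you prove it only shows that $U^M$ preserves the limit of a diagram $D$ when $U^M D$ is already known to have a limit in $\categ C$; if $D$ has a limit in $\catalg{\categ C}{M}$ but the existence of $\varprojlim U^M D$ is not given, your argument says nothing about whether $U^M$ sends the limiting cone to a limiting cone. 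The paper dispatches this separately by observing that $U^M$ is a right adjoint (to the free algebra functor $T_M$) and hence preserves all limits; adding that one sentence closes the gap.
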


\begin{proof}
The functor $U^M$ preserves limits since it is right adjoint. Moreover, it reflects limits since it is conservative. Finally, for any diagram $D : \categ I \to \catalg{\categ C}{M}$, if $U^M \circ D$ has a limit, then this limit has the structure of a $M$-algebra given by
 $$
    M(\varprojlim U^MD) \to \varprojlim MU^MD
    = \varprojlim U^MT_MU^MD\to \varprojlim U^MD .
 $$
 This $M$-algebra is the limit of the diagram $D$.
\end{proof}

\begin{corollary}
 Let $Q$ be a comonad on a category $\categ C$. Then, the functor $U_Q$ preserves and creates colimits
 that may exist in $\categ C$.
\end{corollary}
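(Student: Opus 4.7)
The corollary is an immediate dualization of the preceding proposition, so the plan is to invoke the monad/comonad duality formalised earlier in the paper rather than reprove anything from scratch. Specifically, the isomorphism of monoidal contexts $\Monads \simeq \Comonads^{co}$ (from Subsection 2.3) identifies a comonad $Q$ on $\categ C$ with a monad $Q^{\op}$ on the opposite category $\categ C^{\op}$.

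Under this identification, there is a canonical isomorphism of categories
\[
\catcog{\categ C}{Q} \simeq \catalg{\categ C^{\op}}{Q^{\op}}^{\op},
\]
and the forgetful functor $U_Q : \catcog{\categ C}{Q} \to \categ C$ corresponds, via this isomorphism, to the opposite of the monadic forgetful functor, that is, to $(U^{Q^{\op}})^{\op}$. Since taking opposite categories swaps limits and colimits, $U_Q$ preserves (resp.\ creates) colimits if and only if $U^{Q^{\op}}$ preserves (resp.\ creates) limits. The latter is exactly the preceding proposition applied to the monad $Q^{\op}$ on $\categ C^{\op}$, which finishes the argument.

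There is no real obstacle: the only thing worth checking once is that the identification $\catcog{\categ C}{Q} \simeq \catalg{\categ C^{\op}}{Q^{\op}}^{\op}$ really does intertwine $U_Q$ with $(U^{Q^{\op}})^{\op}$, but this is immediate from the definition of a coalgebra as an algebra in the opposite category together with the definition of the forgetful functor, so a one-line proof suffices.
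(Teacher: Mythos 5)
Your proposal is correct and is exactly the argument the paper intends: the corollary is stated without proof precisely because it is the formal dual of the preceding proposition, obtained by viewing the comonad $Q$ on $\categ C$ as a monad on $\categ C^{\op}$, identifying $Q$-coalgebras with the opposite of the category of algebras over that monad, and using that passing to opposite categories exchanges limits and colimits. The one point you flag as worth checking (that the identification intertwines $U_Q$ with the opposite of the monadic forgetful functor) is indeed immediate from the definitions, so nothing is missing.
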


\begin{proposition}\label{prop: alg reflex}
 Let $M$ be a monad on a category $\categ C$. Let us suppose $\categ C$ has all reflexive coequalisers and that $M$
 preserves these reflexive coequalisers. Then the category of $M$-algebras has all reflexive coequalisers and these are preserved by
 the functor $U^M$.
\end{proposition}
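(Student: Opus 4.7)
The plan is to construct reflexive coequalisers in $\catalg{\categ C}{M}$ by transporting them along the forgetful functor $U^M$. Given a reflexive pair of $M$-algebra morphisms $f,g : (A,\gamma_A) \rightrightarrows (B,\gamma_B)$ with common section $s : B \to A$, I would form the coequaliser $q : B \to C$ of $U^M(f), U^M(g)$ in $\categ C$, which exists by hypothesis and is again reflexive (with section $U^M(s)$). The goal is to endow $C$ with the unique $M$-algebra structure that makes $q$ an algebra morphism and to check that the resulting cofork is the coequaliser in $\catalg{\categ C}{M}$.

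For the first step I would observe that, by hypothesis, $M$ preserves the reflexive coequaliser $q$, so $M(q) : M(B) \to M(C)$ is the coequaliser of $M(f), M(g)$ in $\categ C$. Since $f,g$ are algebra morphisms, the composite $q \circ \gamma_B : M(B) \to C$ coequalises $M(f)$ and $M(g)$, hence factors uniquely through $M(q)$ to yield a map $\gamma_C : M(C) \to C$ with $\gamma_C \circ M(q) = q \circ \gamma_B$. By construction $q$ is then a morphism of the underlying datum.

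The main technical step is verifying that $\gamma_C$ satisfies the unit and associativity axioms. Unitality follows directly from the universal property of $q$ and the unit axiom for $\gamma_B$. For associativity, I need the composite coequaliser $M^2(q) : M^2(B) \to M^2(C)$ to itself be a coequaliser of $M^2(f), M^2(g)$; this holds because $M$ preserves the reflexive coequaliser and the resulting pair $M(f), M(g)$ is still reflexive (split by $M(s)$), so applying $M$ a second time preserves it as well. Then the two composites $\gamma_C \circ M(\gamma_C)$ and $\gamma_C \circ \mu_C$ from $M^2(C)$ to $C$ agree after precomposition with the epimorphism $M^2(q)$ (by associativity of $\gamma_B$ and naturality of $\mu$), hence they agree. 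This is the step where one actually needs $M$ to preserve reflexive coequalisers, and will be the main technical obstacle.

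Finally, to show that $(C,\gamma_C)$ together with $q$ is the coequaliser in $\catalg{\categ C}{M}$, I would take any algebra map $h : (B,\gamma_B) \to (D,\gamma_D)$ coequalising $f,g$; the universal property in $\categ C$ yields a unique $\bar h : C \to D$ with $\bar h \circ q = h$, and the identity $\bar h \circ \gamma_C \circ M(q) = \bar h \circ q \circ \gamma_B = h \circ \gamma_B = \gamma_D \circ M(h) = \gamma_D \circ M(\bar h) \circ M(q)$, combined with the epimorphism property of $M(q)$, forces $\bar h$ to be an $M$-algebra morphism. That $U^M$ preserves this coequaliser is tautological from the construction, which completes the proof.
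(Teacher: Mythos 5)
Your proof is correct and follows essentially the same route as the paper: form the reflexive coequaliser in $\categ C$, use preservation by $M$ to transport the algebra structure along the quotient map, and verify the axioms and the universal property. The paper packages the transported structure map as $M(\colim U^M \circ D)\simeq \colim(MU^M\circ D)\to \colim U^M\circ D$ and leaves the axiom verification as ``a straightforward check'', which your argument (including the key observation that $M^2(q)$ is again a reflexive coequaliser, hence an epimorphism) fills in explicitly.
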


\begin{proof}
 Let $I$ be the category generated by
\begin{itemize}
 \itemt two objects $0$ and $1$;
 \itemt three nontrivial morphisms $f,g: 0 \to 1$ and $h: 1 \to 0$
\end{itemize}
with the relation $fs =gs =\id_1$.
For any diagram $D : I \to \catalg{\categ C}{M}$, let $\algebra A$ be the colimit of the functor $U^M \circ D$.
Then, $\algebra A$ has the structure of a $M$ algebra as follows
\[
	M(\algebra A) = M (\colim U^M \circ D) \simeq  \colim (M U^M \circ D) = \colim(U^M T_M U^M \circ D)
	\to  \colim  U^M \circ D = \algebra A .
\]
A straightforward check shows that this defines the structure of a $M$-algebra
on $A$ which gives the colimit of
the diagram $D$. It is then clear that $U^M$ preserves reflexive coequalisers.
\end{proof}

\begin{proposition}\label{prop: alg cocomp}
In the context of Proposition \ref{prop: alg reflex}, let us suppose that $\categ C$ is cocomplete. Then the category of $M$-algebras is cocomplete.
\end{proposition}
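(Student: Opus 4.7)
The plan is to show cocompleteness of $\catalg{\categ C}{M}$ by combining the reflexive coequalisers from Proposition \ref{prop: alg reflex} with the canonical presentation of each $M$-algebra as a reflexive coequaliser of free algebras. The idea is that any colimit in $\catalg{\categ C}{M}$ is then obtainable as a coequaliser of the colimits of the levels of the presentation.

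Two preliminary observations are needed. First, since $T_M \dashv \forget^M$ and $\categ C$ is cocomplete, the left adjoint $T_M$ preserves all small colimits; hence, for any small diagram $G : \categ J \to \categ C$, the diagram $T_M \circ G$ in $\catalg{\categ C}{M}$ admits a colimit given by $T_M(\colim_{\categ J} G)$. Second, every $M$-algebra $A$ is canonically the coequaliser in $\catalg{\categ C}{M}$ of the reflexive pair
\[
T_M M \forget^M A \rightrightarrows T_M \forget^M A,
\]
with parallel maps $T_M m_A$ and the counit $\epsilon_{T_M \forget^M A}$ of the free-forgetful adjunction, common section $T_M \eta_{\forget^M A}$, and coequaliser $A$ via $\epsilon_A$.

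Given a small diagram $D : \categ J \to \catalg{\categ C}{M}$, I would apply this presentation levelwise to get a reflexive pair of diagrams $\categ J \to \catalg{\categ C}{M}$ landing in free algebras. Taking colimits of each leg, which exist in $\catalg{\categ C}{M}$ and are computed as $T_M$ applied to the corresponding colimit in $\categ C$ by the first observation, I obtain a reflexive pair
\[
T_M \bigl(\colim_{\categ J} M \forget^M D\bigr) \rightrightarrows T_M \bigl(\colim_{\categ J} \forget^M D\bigr)
\]
in $\catalg{\categ C}{M}$, whose coequaliser $C$ exists by Proposition \ref{prop: alg reflex}.

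Finally, one verifies that $C$ is the colimit of $D$ by computing its representable functor on an arbitrary $M$-algebra $A$: the adjunction $T_M \dashv \forget^M$ rewrites maps from each leg of the reflexive pair into $A$ as limits over $\categ J$ of hom-sets in $\categ C$, and swapping the equaliser defining $\Hom{\catalg{\categ C}{M}}{C}{A}$ with the limit over $\categ J$ identifies each fibre with $\Hom{\catalg{\categ C}{M}}{D(j)}{A}$ via the presentation of $D(j)$, yielding $\Hom{\catalg{\categ C}{M}}{C}{A} \simeq \lim_{j \in \categ J} \Hom{\catalg{\categ C}{M}}{D(j)}{A}$. The main obstacle is the bookkeeping in this final naturality check, but no new ingredient beyond the two observations and repeated use of universal properties is required.
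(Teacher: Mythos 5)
Your proof is correct and rests on the same mechanism as the paper's: reflexive coequalisers of free algebras on colimits computed in $\categ C$, whose existence is guaranteed by Proposition \ref{prop: alg reflex}. The only difference is packaging: the paper applies this construction to coproducts only (its pair $T_M(\coprod_i M(\algebra A_i)) \rightrightarrows T_M(\coprod_i \algebra A_i)$ is exactly your construction for a discrete diagram) and then invokes the standard fact that coproducts together with reflexive coequalisers yield all colimits, whereas you run the canonical free presentation for an arbitrary small diagram and check the universal property directly.
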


\begin{proof}
Given a family of $M$-algebras $(\algebra A_i)_{i \in J}$, their coproduct is given by the
reflexive coequaliser of the following diagram
\[
\begin{tikzcd}
 	T_M(\coprod_i M(\algebra A_i))
	\ar[r, bend left] \ar[r, bend right]
	& T_M(\coprod_i \algebra A_i) .
	\ar[l]
\end{tikzcd}
\]
It is enough to have all reflexive coequalisers and all coproducts to be cocomplete.
\end{proof}

\begin{corollary}\label{prop: cog coreflex}
 Let $Q$ be a comonad on a category $\categ C$. Let us suppose $\categ C$ has all coreflexive equalisers and that $Q$
 preserves these coreflexive equalisers. Then the category of $Q$-coalgebras has all coreflexive equalisers and these are preserved by
 the functor $U_Q$.
\end{corollary}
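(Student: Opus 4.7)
The plan is to deduce this corollary by duality from Proposition \ref{prop: alg reflex}. Recall that a comonad $Q$ on $\categ C$ is the same data as a monad on $\categ C^{\op}$ (let us call it $Q^{\op}$), that $Q$-coalgebras in $\categ C$ correspond bijectively, via the canonical isomorphism $\Cats \simeq \Cats^{co}$ of monoidal contexts, to $Q^{\op}$-algebras in $\categ C^{\op}$, and that this correspondence intertwines the forgetful functors. Under the passage to opposite categories, coreflexive equalisers in $\categ C$ become reflexive coequalisers in $\categ C^{\op}$, and the preservation hypothesis on $Q$ becomes precisely the preservation hypothesis on $Q^{\op}$ required by Proposition \ref{prop: alg reflex}. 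Applying that proposition then yields reflexive coequalisers in $Q^{\op}$-algebras preserved by the forgetful functor, and passing back gives exactly the stated corollary.

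For readers who prefer a direct argument, I would also indicate the dual construction. Given a coreflexive pair $f_1, f_2 : V \rightrightarrows W$ of morphisms of $Q$-coalgebras with common left inverse $s : W \to V$, consider the equaliser $E$ of the underlying pair $U_Q f_1, U_Q f_2$ in $\categ C$, which exists by hypothesis. The key step is to equip $E$ with a $Q$-coalgebra structure. Since $Q$ preserves this coreflexive equaliser, $Q(E)$ is the equaliser of $Q U_Q f_1, Q U_Q f_2$ in $\categ C$. The composite $E \to U_Q V \to Q U_Q V$ equalises the two parallel maps (using that $f_1, f_2$ are coalgebra morphisms), hence factors uniquely through $Q(E)$, giving a candidate structure map $E \to Q(E)$. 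The coassociativity and counitality axioms follow by chasing them through the equaliser diagrams and invoking the universal property; the resulting $Q$-coalgebra is the equaliser in $\catcog{\categ C}{Q}$, and $U_Q$ preserves it by construction.

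The main point where one must be careful is verifying that the induced map $E \to Q(E)$ is indeed a counital, coassociative coaction. This is entirely formal once one knows that $Q$ preserves the equaliser at hand, but one should spell out the diagram chases: counitality follows from $U_Q V \to Q U_Q V \to U_Q V = \id$ and the universal property of $E$ as an equaliser; coassociativity follows by applying the universal property of $Q(E)$, noted above as the equaliser of $Q U_Q f_1, Q U_Q f_2$, and the fact that $Q$ also preserves the coreflexive equaliser $Q(E) \rightrightarrows Q Q U_Q W$. Either way — via the explicit construction or the one-line appeal to duality — the corollary is immediate from material already in place.
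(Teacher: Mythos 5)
Your proposal is correct and matches the paper's intended route: the corollary is stated without proof precisely because it is the formal dual of Proposition \ref{prop: alg reflex}, obtained by passing to opposite categories exactly as you describe. Your supplementary direct construction is also sound and is just the dual of the paper's own proof of that proposition (factor the coaction through the preserved equaliser, then verify counitality and coassociativity via the universal property).
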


\begin{corollary}
 In the context of Corollary \ref{prop: cog coreflex}, let us suppose that $\categ C$ is complete. Then the category of $Q$-coalgebras is complete.
\end{corollary}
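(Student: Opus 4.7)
The plan is simply to dualise Proposition \ref{prop: alg cocomp}. By Corollary \ref{prop: cog coreflex} the category $\Cog_{\categ C}(Q)$ already admits coreflexive equalisers (preserved by $U_Q$); dually to the classical fact that coproducts together with reflexive coequalisers suffice for cocompleteness, small products together with coreflexive equalisers suffice for completeness. Thus it only remains to construct small products of $Q$-coalgebras.

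Given a small family $(V_i)_{i \in I}$ of $Q$-coalgebras, each $V_i$ can be written as the coreflexive equaliser in $\Cog_{\categ C}(Q)$ of the canonical parallel pair
\[
    L^Q U_Q V_i \rightrightarrows L^Q Q U_Q V_i,
\]
whose two arrows are, respectively, $L^Q$ applied to the coalgebra structure $U_Q V_i \to Q U_Q V_i$ and the unit of the adjunction $U_Q \dashv L^Q$ at $L^Q U_Q V_i$ (underlying, the comultiplication $\delta_{U_Q V_i}$); a common left inverse is provided by $L^Q$ applied to the counit $Q U_Q V_i \to U_Q V_i$ of the comonad $Q$, which one checks using the coalgebra counital axiom together with one of the comonad counital axioms. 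Since $L^Q$ is a right adjoint it preserves products; taking the product in $\categ C$ of the underlying diagrams (which exists because $\categ C$ is complete) and inserting it inside $L^Q$ yields in $\Cog_{\categ C}(Q)$ the coreflexive parallel pair
\[
    L^Q\!\left(\prod_i U_Q V_i\right) \rightrightarrows L^Q\!\left(\prod_i Q U_Q V_i\right).
\]

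Its coreflexive equaliser in $\Cog_{\categ C}(Q)$ exists by Corollary \ref{prop: cog coreflex}. A routine verification using the adjunction $U_Q \dashv L^Q$ and the universal property of products in $\categ C$ shows that it realises the product $\prod_i V_i$ in $\Cog_{\categ C}(Q)$: morphisms of $Q$-coalgebras from any $W$ into this equaliser correspond, by the adjunction, to morphisms $U_Q W \to \prod_i U_Q V_i$ in $\categ C$ equalising the two arrows above, which in turn correspond to families of $Q$-coalgebra maps $W \to V_i$. Combining these products with the coreflexive equalisers supplied by Corollary \ref{prop: cog coreflex} yields all small limits in $\Cog_{\categ C}(Q)$, which is therefore complete. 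The only minor technical point is the verification of the universal property of the equaliser, which is entirely mechanical given the comonad and adjunction identities.
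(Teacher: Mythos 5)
Your proof is correct and follows essentially the same route as the paper, which leaves this corollary as the formal dual of Proposition \ref{prop: alg cocomp}: products of $Q$-coalgebras are obtained as coreflexive equalisers of cofree coalgebras on products in $\categ C$, and products together with coreflexive equalisers suffice for completeness. The extra details you supply (the cofree presentation of each $V_i$ and the check of the common left inverse via the counit axioms) are accurate and merely make explicit what the paper leaves implicit.
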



\section{Trees}

\label{appendixtree}

\begin{definition}
    A tree (also called planar tree) $t = (\mathrm{edges}(t), \mathrm{leaves}(t))$
    is the data of a non empty finite set
    $\mathrm{edges}(t)$ called the set of edges of $t$t and
    a subset $\mathrm{leaves}(t)$ called the set of leaves of $t$
    together with
    two partial order on $\mathrm{edges}(t)$ 
    \begin{itemize}
        \itemt the height order $\leq$, that has a minimal element called the
        root, so that leaves are maximal elements,
        and so that for any edge $e$ the poset
        $$
        \mathrm{edges}(t)_{\leq e} = \{e' \leq e\}
        $$
        is linear;
        \itemt the planar order $\leq_\pl$ that is a linear order and so that
        $$
        e' \leq e \implies e' \leq_\pl e .
        $$
    \end{itemize}
   \end{definition}

   \begin{definition}
        Given an edge $e$ of a tree $t$, the descendants of
        $e$ are the edges $e'$ so that $e' > e$. The children
        of $e$ are its immediate descendants, that is the edges
        $e' > e$ so that for any other edge $e''$:
        $$
        e' \geq e'' > e \implies e' = e''.
        $$
   \end{definition}

   \begin{definition}
    A subtree $t'$ of a tree $t$ is the data of
    \begin{itemize}
        \itemt non empty subset
        $\mathrm{edges}(t') \subseteq \mathrm{edges}(t)$
        that has a minimal element (not necessarily the root of $t$)
        and so that for any of its element $e$, if it contains
        a descendant of $e$, then it
        contains all its children;
        \itemt a subset $\mathrm{leaves}(t')$ of the set of
        maximal elements 
        of $\mathrm{edges}(t')$ that contains the intersection
        $\mathrm{edges}(t') \cap \mathrm{leaves}(t')$ and that contains
        all the maximal elements of $\mathrm{leaves}(t')$
        that are not maximal in $\mathrm{leaves}(t)$.
    \end{itemize}
    A subtree is in particular a tree.
    
    A node of a tree $t$ is a subtree that contains just a non leaf
    edge $e$ and 
    its children which become the leaves of the node.
   \end{definition}

The following picture represents a tree whose set of edges
is 
   $$
    \{r, e_1, e_2, e_3, l_1, l_2, l_3, l_4\}
   $$
whose root is $r$ and whose set of leaves is $\{l_1, l_2, l_3, l_4\}$.
The height order is given by the relations
\begin{align*}
    &l_1, l_2 > e_1;
    \\
    &e_3, l_4 > e_2;
    \\
    & e_1, l_3, e_2 > r.
\end{align*}
The planar order is
$$
r< e_1<l_1<l_2<l_3<e_2<e_3<l_4 .
$$
The nodes are represented using dots.
$$
\tikz{
\draw (2.5,0) -- (2.5,2);
\fill (2.5,1) circle (3pt);
\draw (2.5,1) -- (1,2);
\draw (2.5,1) -- (4,2);
\draw (1,2) -- (0,3);
\fill (1,2) circle (3pt);
\draw (1,2) -- (2,3);
\draw (4,2) -- (3,3);
\fill (4,2) circle (3pt);
\draw (4,2) -- (5,3);
\fill (3,3) circle (3pt);
\draw (0.5,2.5) node[below left] {$l_1$};
\draw (1.5,2.5) node[below right] {$l_2$};
\draw (2.5,2) node[above] {$l_3$};
\draw (4.5,2.5) node[below right] {$l_4$};
\draw (1.75,1.5) node[below left] {$e_1$};
\draw (3.25,1.5) node[below right] {$e_2$};
\draw (3.5,2.5) node[below left] {$e_3$};
\draw (2.5,0.5) node[left] {$r$};
}
$$

   \begin{definition}
       Let $n$ be a natural integer. An $n$ corolla
       is a tree whose set of edges contains exactly the root and $n$
       leaves.
   \end{definition}

   \begin{definition}
       Let $t$ be a tree and let $e$ be an edge. The height of $e$ in
       $t$ is the cardinal of the set
       $$
        \{e' < e\} .
       $$
       In particular the height of the root is zero.
       Then, the height of a node in $t$ is the height of its root.
       Finally, the height of the tree $t$ is the maximal height
       of its nodes plus one
       $$
        \mathrm{height}(t) = \mathrm{max}_{n \text{ node}} \mathrm{height}(n)+1.
       $$
       If $t$ has no node, then its height is zero.
   \end{definition}

   \begin{definition}
       An isomorphism of trees from $t$ to $t'$ is the data
       of an isomorphism 
       $$
       \phi: \mathrm{edges}(t) \simeq \mathrm{edges}(t')
       $$
       that sends leaves to leaves and that preserves the height order.
       This defines the groupoid of trees.
   \end{definition}

   \begin{definition}
    A planar isomorphism of trees from $t$ to $t'$ is an isomorphism
    of trees that also preserves the planar order.
    This defines the groupoid of planar trees.
\end{definition}

\bibliographystyle{amsalpha}
\bibliography{bib-mapping-coalgebra}

\end{document}